\newcommand{\summe}[2]{\displaystyle\sum_{#1}^{#2}}
\newcommand{\summezwei}[2]{\sum_{#1}^{#2}}
\newcommand{\integral}[4]{\displaystyle\int\limits_{#1}^{#2}{#3}\,{#4}}
\newcommand{\indikator}[2]{\mathds{1}_{{#1}}({#2})}
\newcommand{\indikatorzwei}[1]{\mathds{1}_{{#1}}}
\newcommand{\erwartung}[1]{\mathbb{E}({#1})}
\newcommand{\bigerwartung}[1]{\mathbb{E}\left({#1}\right)}
\newcommand{\stokonv}[1]{\xrightarrow[({#1}\rightarrow\infty)]{ \mathbb{P}  }}
\newcommand{\limes}[2]{\lim \limits_{#1 \rightarrow #2}}
\newcommand{\ft}[1]{\widehat{{#1}}}
\newcommand{\B}{\mathcal{B}}
\newcommand{\im}{\mathrm{i}}
\newcommand{\C}{\mathbb{C}}
\newcommand{\R}{\mathbb{R}}
\newcommand{\Q}{\mathbb{Q}}
\newcommand{\N}{\mathbb{N}}
\newcommand{\Pro}{\mathbb{P} } 
\newcommand{\eps}{\varepsilon} 
\newcommand{\secret}[1]{}
\newcommand{\norm}[1]{\| {#1}\|}
\newcommand{\Li}[1]{\text{L}(\R^{#1})}
\newcommand{\bignorm}[1]{ \left \Vert {#1} \right \Vert}
\newcommand{\skp}[2]{ \langle {#1},{#2} \rangle}
\newcommand{\skpzwei}[2]{ \left \langle {#1},{#2} \right \rangle}
\newtheorem{theorem}{Theorem}[section]
\newtheorem{lemma}[theorem]{Lemma}
\newtheorem{prop}[theorem]{Proposition}
\newtheorem{cor}[theorem]{Corollary}
\theoremstyle{definition}
\newtheorem{defi}[theorem]{Definition}
\newtheorem{example}[theorem]{Example}
\theoremstyle{remark}
\newtheorem{remark}[theorem]{Remark}
\numberwithin{equation}{section}
\begin{document}
\sloppy
\title[]{Multivariate stochastic integrals with respect to independently scattered random measures on $\delta$-rings}

\author{D. Kremer}
\address{Dustin Kremer, Department Mathematik, Universit\"at Siegen, 57068 Siegen, Germany}
\email{kremer\@@{}mathematik.uni-siegen.de}

\author{H.-P. Scheffler}
\address{Hans-Peter Scheffler, Department Mathematik, Universit\"at Siegen, 57068 Siegen, Germany}
\email{scheffler\@@{}mathematik.uni-siegen.de}

\date{\today}

\subjclass[2010]{60E07, 60G57, 60H05.}
\keywords{Multivariate independently scattered random measures (ISRM), stochastic integrals with respect to ISRMs. }

\begin{abstract}
In this paper we construct general vector-valued infinitely-divisible independently scattered random measures with values in $\R^m$ and their corresponding stochastic integrals. Moreover, given such a random measure, the class of all integrable matrix-valued deterministic functions is characterized in terms of certain characteristics of the random measure. In addition a general construction principle is presented.

\end{abstract}
\maketitle
\section{Introduction}
Various stochastic processes and random fields are built by integrating a family of deterministic functions with respect to an infinitely-divisible random measure (e.g. a noise). One of the first and most prominent examples is the fractional Brownian motion. This was extended to the so called fractional stable motion by replacing the Gaussian random measure by a symmetric $\alpha$-stable ($S \alpha S$) random measure, see \cite{SaTaq94} for details. \\
Based on $S \alpha S$ random measures a vast class of stochastic processes and random fields has been constructed. See e.g. \cite{multi}, \cite{bms}, \cite{hoffmann}, \cite{SaTaq94} and  \cite{Sato} to name a few. All these processes and fields are univariate and have $S \alpha S$ marginal distributions by construction. The general theory of arbitrary infinitely-divisible independently scattered random measures (ISRMs) and the class of integrable functions was carried out in \cite{RajRo89}.\\
Surprisingly enough much less is known in the multivariate case. Besides the Gaussian case and an ad hoc construction of a multivariate $S \alpha S$ random measure in \cite{lixiao}, there appears to be no general theory of multivariate random measures.  The purpose of this paper is to carefully develop an honest theory of general infinitely-divisible ISRMs and their corresponding integrals for matrix-valued deterministic functions. Our approach follows along the lines of \cite{RajRo89}. However, since we construct vector-valued measures, some univariate methods using monotonocity no longer apply. \\
 One may argue that infinite divisibility is a rather strong property. However, we show that an atomless random measure (see \cite{Pre57}) is necessarily infinitely-divisible (i.d.), so i.d. is quite a natural assumption. In a subsequent paper \cite{paper2} our methods will be used to construct an $\R^m$-valued ISRM with operator-stable marginals. \\
The paper is organized as follows. We start with some notation and useful preliminaries about infinitely-divisible distributions and $\delta$-rings in section 2. We then characterize all infinitely-divisible $\R^m$-valued random measures in section 3, already suggesting a complex-valued point of view and proposing a useful construction principle in \cref{08031703}. Section 4 is devoted to an insertion about atomless random measures and its connection to infinite divisibility. Finally, in section 5 the integrators provided by section 3 are used to define the corresponding stochastic integral for matrix-valued functions. Here we will characterize the class of integrable functions (w.r.t. to a given random measure) and clarify the intimate relation between the real-valued and complex-valued perspective as announced before.
\section{Preliminaries}
Let L$(\mathbb{K}^m)$ denote the set of all linear operators on $\mathbb{K}^m$, represented as $m \times m$ matrices with entries from $\mathbb{K}$, where $\mathbb{K}$ is either $\R$ or $\C$. Furthermore let $\norm{\cdot}$ be the Euclidian norm on $\R^m$ with inner product $\skp{\cdot}{\cdot}$ while the identity operator on $\R^m$ is denoted by $I_m$. Then it is well-known (\textit{L\'{e}vy-Khintchine-Formula}, see \cite{thebook}) that $\varphi=\exp(\psi)$ with $\psi:\R^m \rightarrow \C$ is the Fourier transform (or characteristic function) of an infinitely-divisible distribution on $\R^m$, if and only if $\psi$ can be represented as
\begin{equation*}
\psi(t)=\im  \skp{\gamma}{t} - \frac{1}{2} \skp{Qt}{t} + \integral{\R^m}{}{\left( \text{e}^{\im \skp{t}{x}} - 1 - \frac{\im \skp{t}{x} }{1+\norm{x}^2} \right)}{\phi(dx)} , \quad t \in \R^m
\end{equation*}
for a \textit{shift} $\gamma \in \R^m$, some \textit{normal component} $Q \in \Li{m}$ which is symmetric and positive semi-definite and a L\'{evy} measure $\phi$, i.e. $\phi$ is a measure on $\R^m$ with $\phi(\{0\})=0$ and $\int_{\R^m} \min\{1,\norm{x}^2\} \, \phi(dx)< \infty$. For the distribution $\mu$ with $\ft{\mu}=\varphi$ we write $\mu \sim [\gamma, Q, \phi]$ as $\gamma, Q$ and $\phi$ are uniquely determined by $\mu$. $\psi$ is the only continuous function with $\psi(0)=0$ and $\ft{\mu}=\exp(\psi)$, subsequently referred to as the \textit{log-characteristic function} of $\mu$. 
\begin{lemma} \label{23111604}
Let $(\mu_n)$ be a sequence of i.d. distributions on $\R^m$. Then $\mu_n \sim [\gamma_n,Q_n,\phi_n]$ converges weakly to the point measure in zero $\eps_0$ if and only if $\gamma_n \rightarrow 0, Q_n \rightarrow 0$ and
\begin{equation} \label{eq:21111601}
 \integral{\R^m}{}{\min \{1,\norm{x}^2\} }{\phi_n(dx)} \rightarrow 0 \quad (n \rightarrow \infty).
\end{equation}
\end{lemma}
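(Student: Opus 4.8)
The plan is to pass from the measures to their log-characteristic functions and argue entirely with the L\'evy--Khintchine exponents $\psi_n$. Since $\ft{\mu_n}=\exp(\psi_n)$ and $\ft{\eps_0}\equiv 1$, L\'evy's continuity theorem reduces the weak convergence $\mu_n\schwachekonv\eps_0$ to $\ft{\mu_n}(t)\to 1$ for every $t$, and because each $\ft{\mu_n}$ is zero-free with distinguished continuous logarithm $\psi_n$ satisfying $\psi_n(0)=0$, this is in turn equivalent to $\psi_n(t)\to 0$ for all $t$. For the harder implication I will use the stronger fact that weak convergence forces $\ft{\mu_n}\to 1$ \emph{uniformly} on compact sets, whence $\psi_n\to 0$ uniformly on compacts.

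For the "if" direction, assume $\gamma_n\to 0$, $Q_n\to 0$ and \eqref{eq:21111601}, and split $\psi_n(t)$ into its shift, Gaussian and integral parts. The first two tend to $0$ trivially. For the integral I would record the elementary bound
\[
\left| \text{e}^{\im \skp{t}{x}} - 1 - \frac{\im \skp{t}{x}}{1+\norm{x}^2} \right| \le C(t)\,\min\{1,\norm{x}^2\},
\]
with $C(t)$ bounded on compact $t$-sets: near the origin one combines the Taylor bound $|\text{e}^{\im a}-1-\im a|\le a^2/2$ with the estimate $|a|\,\norm{x}^2/(1+\norm{x}^2)$, while for $\norm{x}>1$ one bounds the exponential by $2$ and the correction by $\norm{t}$. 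Integrating against $\phi_n$ bounds the integral part by $C(t)\int\min\{1,\norm{x}^2\}\,\phi_n(dx)\to 0$, so $\psi_n(t)\to0$ pointwise and the continuity theorem gives $\mu_n\schwachekonv\eps_0$.

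The "only if" direction is the substantial one. From $\psi_n\to0$ uniformly on compacts I take real parts:
\[
-\real{\psi_n(t)} = \tfrac12\skp{Q_n t}{t} + \integralzwei{\R^m}{}{(1-\cos\skp{t}{x})}{\phi_n(dx)},
\]
a sum of two \emph{nonnegative} quantities whose sum tends to $0$, so each summand tends to $0$ for every fixed $t$. The Gaussian term yields $\skp{Q_n t}{t}\to 0$ for all $t$; evaluating at $t=e_j$ and $t=e_j+e_k$ and using symmetry of $Q_n$ forces $Q_n\to 0$, reducing the remaining work to the L\'evy measure and the shift.

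The crux is extracting \eqref{eq:21111601} from the scalar information $I_n(t):=\int(1-\cos\skp{t}{x})\,\phi_n(dx)\to0$. Here I would exploit uniformity: since $0\le I_n(t)\le -\real{\psi_n(t)}$ and $-\real{\psi_n}=-\log|\ft{\mu_n}|\to0$ uniformly on $\{\norm{t}\le1\}$, also $I_n\to0$ uniformly there. Integrating over the unit ball and applying Tonelli (all integrands are nonnegative) gives
\[
\integralzwei{\{\norm{t}\le 1\}}{}{I_n(t)}{dt} = \integralzwei{\R^m}{}{h(x)}{\phi_n(dx)}\to 0, \qquad h(x):=\integralzwei{\{\norm{t}\le1\}}{}{(1-\cos\skp{t}{x})}{dt}.
\]
The key analytic point is that $h(x)\ge c_1\min\{1,\norm{x}^2\}$ for some $c_1>0$: $h$ is strictly positive for $x\neq0$, behaves like $c\norm{x}^2$ as $x\to0$ (second-order Taylor) and tends to the ball's volume as $\norm{x}\to\infty$ (Riemann--Lebesgue), and a compactness argument on the intermediate annulus fills the gap. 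Hence $\int\min\{1,\norm{x}^2\}\,\phi_n(dx)\le c_1^{-1}\int h\,\phi_n(dx)\to0$, which is \eqref{eq:21111601}. Finally, with $Q_n\to0$ and \eqref{eq:21111601} in hand, I take imaginary parts of $\psi_n(t)\to0$; an estimate of the same type (now using $|\sin a-a|\le|a|^3/6$) shows the integral part of $\ima{\psi_n(t)}$ is $\le C(t)\int\min\{1,\norm{x}^2\}\,\phi_n(dx)\to0$, leaving $\skp{\gamma_n}{t}\to0$ for every $t$, hence $\gamma_n\to0$. The main obstacle throughout is precisely this disentangling step: converting a one-parameter family of scalar limits into control of the genuinely multivariate objects $Q_n$ and $\phi_n$, for which the averaging-in-$t$/Tonelli device and the comparison $h\gtrsim\min\{1,\norm{x}^2\}$ are the essential tools.
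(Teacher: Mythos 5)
Your proof is correct, but it follows a genuinely different route from the paper's. The paper proves the lemma by reduction to the general convergence criterion for infinitely divisible laws in terms of their triplets (Theorem 3.1.16 in \cite{thebook}): after citing that result, all that remains is to check that \eqref{eq:21111601} is equivalent to the pair of L\'{e}vy-measure conditions appearing there, namely $\phi_n(A)\to 0$ for every Borel set $A$ bounded away from zero, together with the vanishing, as $n\to\infty$ and then $\eps\downarrow 0$, of the truncated second moments $\int_{\{0<\norm{x}<\eps\}}\skp{t}{x}^2\,\phi_n(dx)$ for every $t\in\R^m$; this equivalence is settled by splitting the integral at a small radius and decomposing $\R^m$ into orthants, on each of which $\norm{x}^2\le\norm{x}_1^2=\skp{t_j}{x}^2$ for a suitable sign vector $t_j\in\{-1,1\}^m$. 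You instead work entirely at the level of characteristic functions: L\'{e}vy continuity and the distinguished-logarithm lemma give $\psi_n\to 0$ locally uniformly; taking real parts isolates the two nonnegative contributions of $Q_n$ and of $\int_{\R^m}(1-\cos\skp{t}{x})\,\phi_n(dx)$; and your Tonelli averaging over the unit ball, combined with the lower bound $h(x)\ge c_1\min\{1,\norm{x}^2\}$ (Taylor expansion at the origin, Riemann--Lebesgue at infinity, compactness on the intermediate annulus), recovers \eqref{eq:21111601}, after which the imaginary part releases $\gamma_n\to 0$. This averaging device is in effect a multivariate truncation inequality of L\'{e}vy/Esseen type, and your execution is sound; in particular you correctly sidestep the branch problem of the logarithm by working with locally uniform rather than pointwise convergence of $\ft{\mu_n}$, and the real-part step needs no logarithm lemma at all since $\real{\psi_n}=\log|\ft{\mu_n}|$. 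What the paper's argument buys is brevity and economy: Theorem 3.1.16 of \cite{thebook} is invoked again in the proof of \cref{09121603}, so the lemma costs only the orthant observation. What your argument buys is self-containedness --- it needs no convergence theory for infinitely divisible triangular arrays, only the continuity theorem --- and it makes transparent why $\min\{1,\norm{x}^2\}$ is exactly the right gauge on the L\'{e}vy measures; the price is considerably greater length.
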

\begin{proof}
By Theorem 3.1.16 in \cite{thebook} it obviously remains to check that \eqref{eq:21111601} is equivalent to $\phi_n(A) \rightarrow 0$ for all Borel sets $A$ which are bounded away from zero together with 
\begin{equation*}
\limes{\eps}{0} \, \limes{n}{\infty} \integral{\{x: 0< \norm{x} < \eps\} }{}{\skp{t}{x}^2}{\phi_n(dx)}= 0 \quad \text{for all $t \in \R^m$.}
\end{equation*} 
Therefore, by distinguishing the sign of each component, we can decompose $\R^m$ into sets $M_j$ $(j=1,...,2m)$ such that $\norm{x}^2\le \norm{x}_1^2=\skp{t_j}{x}^2$ for all $x \in M_j$ and suitable $t_j \in \{-1,1\}^m$.
\end{proof}
Throughout this paper let $S$ be any non-empty set. Then a family of sets $\mathcal{S} \subset \mathcal{P}(S):=\{A: A \subset S \}$ is called a \textit{$\delta$-ring} (on $S$), if it is a ring (i.e. closed under union and difference together with $\emptyset \in \mathcal{S}$) such that there is a sequence $(S_n) \subset \mathcal{S}$ with $\cup_{n=1}^{\infty} S_n=S$ and which is additionally closed unter countably many intersections. Using the properties of a ring, the sequence $(S_n)$ can assumed to be increasing as well as disjoint, depending on the respective occurrence. Furthermore, write
\begin{equation} \label{eq:21161101}
\bigcup_{n=1}^{\infty} A_n =A \setminus \bigcap_{n=1}^{\infty}(A \setminus A_n) \in \mathcal{S} 
\end{equation}
for $A \in \mathcal{S}$ and any sequence $(A_n)\subset \mathcal{S}$ with $A_n \subset A$, to observe that $\delta$-rings behave locally like $\sigma$-algebras. Particularly any $\delta$-ring $\mathcal{S}$ with $S \in \mathcal{S}$ is a $\sigma$-algebra. The next result is also elementary, but helpful, where $\sigma(\mathcal{S})$ denotes the $\sigma$-algebra on $S$ that is generated by $\mathcal{S}$.
\begin{lemma} \label{15031702}
Let $\mathcal{S}$ be a $\delta$-ring on $S$. Then $A \cap M \in \mathcal{S}$ for all $A \in \mathcal{S}$ and $M \in \sigma(\mathcal{S})$.
\end{lemma}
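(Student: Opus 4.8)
The plan is to fix an arbitrary $A \in \mathcal{S}$ and run the good-sets principle on the collection
\[
\mathcal{M}_A := \{ M \subset S : A \cap M \in \mathcal{S} \}.
\]
If I can show that $\mathcal{M}_A$ is a $\sigma$-algebra on $S$ that contains $\mathcal{S}$, then necessarily $\sigma(\mathcal{S}) \subset \mathcal{M}_A$, and since $A \in \mathcal{S}$ was arbitrary, this is exactly the assertion $A \cap M \in \mathcal{S}$ for all $A \in \mathcal{S}$ and $M \in \sigma(\mathcal{S})$. So the whole proof reduces to verifying the three $\sigma$-algebra axioms for $\mathcal{M}_A$ and the inclusion $\mathcal{S} \subset \mathcal{M}_A$.

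For the inclusion $\mathcal{S} \subset \mathcal{M}_A$, I would note that for $M \in \mathcal{S}$ one has $A \cap M = A \setminus (A \setminus M)$, which lies in $\mathcal{S}$ because a ring is closed under differences; hence $M \in \mathcal{M}_A$. For the axioms: the whole space satisfies $A \cap S = A \in \mathcal{S}$, so $S \in \mathcal{M}_A$. Closure under complements follows from the identity $A \cap (S \setminus M) = A \setminus (A \cap M)$, which lies in $\mathcal{S}$ whenever $M \in \mathcal{M}_A$, again using closure of the ring under differences together with $A \in \mathcal{S}$ and $A \cap M \in \mathcal{S}$. So $S \setminus M \in \mathcal{M}_A$.

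The remaining axiom, closure under countable unions, is where the distinctive $\delta$-ring structure enters. Given a sequence $(M_n) \subset \mathcal{M}_A$, I would set $A_n := A \cap M_n$, observe that each $A_n \in \mathcal{S}$ with $A_n \subset A \in \mathcal{S}$, and then invoke \eqref{eq:21161101} to conclude
\[
A \cap \bigcup_{n=1}^{\infty} M_n = \bigcup_{n=1}^{\infty} A_n \in \mathcal{S},
\]
so that $\bigcup_{n=1}^{\infty} M_n \in \mathcal{M}_A$. Combining the three axioms shows $\mathcal{M}_A$ is a $\sigma$-algebra, and together with $\mathcal{S} \subset \mathcal{M}_A$ the minimality of $\sigma(\mathcal{S})$ finishes the argument.

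The step I expect to be the crux is precisely this countable-union closure: a bare ring need not be stable under countable unions, so the complement and finite-operation axioms are pure ring algebra, but the passage to countably many $M_n$ rests entirely on the "local $\sigma$-algebra" behavior of a $\delta$-ring captured in \eqref{eq:21161101}. The subtlety there is that one may only take unions of sets that are uniformly dominated by a fixed member of $\mathcal{S}$, which is why forming $A_n = A \cap M_n$ (all contained in $A$) rather than working with the $M_n$ directly is essential.
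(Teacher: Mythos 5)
Your proof is correct and follows essentially the same good-sets argument as the paper: both verify that the collection of sets $M$ with $A \cap M \in \mathcal{S}$ forms a $\sigma$-algebra containing $\mathcal{S}$, using ring identities for complements and the $\delta$-ring's closure under countable intersections (your appeal to \eqref{eq:21161101} is just the prepackaged form of the paper's identity $A \cap (\cup_n M_n)^c = \cap_n A \setminus (A \cap M_n)$). The only cosmetic difference is that you fix $A$ and work with $\mathcal{M}_A$ separately for each $A$, while the paper quantifies over all $A \in \mathcal{S}$ at once in its class $\mathcal{D}$.
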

\begin{proof}
Obviously we have $\mathcal{S} \subset \mathcal{D}:=\{M \in \sigma(\mathcal{S})  \, | \, \forall A \in \mathcal{S} :A \cap M \in \mathcal{S}\}$. Then we just have to check that $\mathcal{D}$ is already a $\sigma$-algebra on $S$. Because of $A \cap M^c = A \setminus (A \setminus M)$ it follows that $M^c \in \mathcal{D}$, whenever $M \in \mathcal{D}$ is true. Analogously we see that $\mathcal{D}$ is closed under countably many unions as $A \cap (\cup_{n=1}^{\infty} M_n)^c=\cap_{n=1}^{\infty} A \setminus (A \cap M_n) \in \mathcal{S}$ for $A \in \mathcal{S}$ arbitrary and any sequence $(M_n) \subset \mathcal{D}$. 
\end{proof}
We now want to consider vector-valued set functions with domain $\mathcal{S}$. For our purpose it is sufficient to assume that $V$ is a Banach space (with norm $\norm{\cdot}_V$). Then we call a set function $T:\mathcal{S} \rightarrow V$ \textit{additive}, if $T(\emptyset)=0$ and $T(A_1 \cup \cdots \cup A_k)=T(A_1)+ \cdots + T(A_k)$ for 
any $k \in \N$ and disjoint sets $A_1,...,A_k \in \mathcal{S}$. Furthermore, if 
\begin{equation} \label{eq:22111601}
T \left( \cup_{n=1}^{\infty} A_n \right) = \summe{n=1}{\infty} T(A_n) \quad \text{w.r.t. $\norm{\cdot}_V$}
\end{equation}
holds for any disjoint sequence $(A_n) \subset \mathcal{S}$ with $\cup_{n=1}^{\infty} A_n \in \mathcal{S}$, then $T$ is called \textit{$\sigma$-additive}. Finally $\sigma$-additive set functions on $\sigma$-algebras are called \textit{vector measures}. As we claim $T(A) \in V$ for every $A \in \mathcal{S}$ one can use standard arguments (see 1.36 in \cite{Kle08} for example) to show that an additive set function $T: \mathcal{S} \rightarrow V$ is $\sigma$-additive if and only if
\begin{equation} \label{eq:22111602}
\text{$V$-}\limes{n}{\infty} \, T(A_n) =0 \quad \text{for all $(A_n)\subset \mathcal{S}$ with $A_n \downarrow \emptyset$.}
\end{equation}
In this context we distinguish the previous definition from the term \textit{pre-measure}, i.e. those $\sigma$-additive set functions on $\mathcal{S}$ that take values in $[0, \infty]$. Yet, given any set function $T:\mathcal{S} \rightarrow V$, the \textit{total variation} $|T|$ (of $T$) connects these concepts:
\begin{equation*}
|T|(A):=\sup \left \{ \sum_{j=1}^{n} \norm{T(A_j)}_V  \, \, | \, \, \text{$n \in \N$ and $A_1,...,A_n \in \mathcal{S}$ disjoint with $A_j \subset A$} \right \}, \quad A \in \mathcal{S}.
\end{equation*}
\begin{theorem} \label{09121602}
Let $T: \mathcal{S} \rightarrow V$ be a $\sigma$-additive set function. Then $|T|$ is a pre-measure. Additionally, if $V$ is finite-dimensional, then $|T|$ is $[0,\infty)$-valued, i.e. a finite pre-measure.
\end{theorem}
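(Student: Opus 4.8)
The plan is to show first that $|T|$ is finitely additive and monotone, then to upgrade this to $\sigma$-additivity by combining the $\sigma$-additivity of $T$ with the triangle inequality for norm-convergent series, and finally, in the finite-dimensional case, to rule out the value $+\infty$ by a coordinatewise reduction to a scalar exhaustion argument.

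For finite additivity, let $A=A'\cup A''$ with $A',A''\in\mathcal S$ disjoint. The inequality $|T|(A)\ge |T|(A')+|T|(A'')$ is immediate, since concatenating a disjoint family below $A'$ with one below $A''$ produces a disjoint family below $A$. For the reverse, given any finite disjoint $C_1,\dots,C_k\in\mathcal S$ with each $C_i\subset A$, I would write $C_i=(C_i\cap A')\cup(C_i\cap A'')$, noting $C_i\cap A'=C_i\setminus(C_i\setminus A')\in\mathcal S$ because $\mathcal S$ is a ring, and use additivity of $T$ and the triangle inequality to get $\|T(C_i)\|_V\le\|T(C_i\cap A')\|_V+\|T(C_i\cap A'')\|_V$. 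Summing over $i$ and recognising $\{C_i\cap A'\}_i$ and $\{C_i\cap A''\}_i$ as admissible families for $A'$ and $A''$ yields $\sum_i\|T(C_i)\|_V\le |T|(A')+|T|(A'')$; taking the supremum over $\{C_i\}$ gives $|T|(A)\le|T|(A')+|T|(A'')$. Together with $|T|(\emptyset)=0$ this gives finite additivity, and monotonicity $|T|(B)\le|T|(A)$ for $B\subset A$ follows.

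To obtain $\sigma$-additivity, let $(A_n)\subset\mathcal S$ be disjoint with $A:=\bigcup_n A_n\in\mathcal S$. Applying finite additivity and monotonicity to the partial unions $\bigcup_{n\le N}A_n\subset A$ gives $\sum_n|T|(A_n)\le|T|(A)$. For the converse, take any finite admissible family $\{C_i\}$ for $A$; since $C_i=\bigcup_n(C_i\cap A_n)$ is a disjoint union in $\mathcal S$, the $\sigma$-additivity of $T$ gives $T(C_i)=\sum_n T(C_i\cap A_n)$ in $V$, hence $\|T(C_i)\|_V\le\sum_n\|T(C_i\cap A_n)\|_V$. Summing over $i$, interchanging the nonnegative double sum, and using that $\{C_i\cap A_n\}_i$ is admissible for $A_n$ yields $\sum_i\|T(C_i)\|_V\le\sum_n|T|(A_n)$; the supremum over $\{C_i\}$ gives $|T|(A)\le\sum_n|T|(A_n)$. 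Thus $|T|$ is a $[0,\infty]$-valued $\sigma$-additive set function, i.e. a pre-measure.

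For the finiteness claim, fix a basis identifying $V$ with $\mathbb K^d$ and write $T=(T_1,\dots,T_d)$, where each $T_i=e_i^*\circ T$ is a scalar $\sigma$-additive set function (composition with a continuous coordinate functional preserves $\sigma$-additivity). By equivalence of norms there is a constant $C$ with $\|v\|_V\le C\sum_i|v_i|$, whence $|T|(A)\le C\sum_i|T_i|(A)$, so it suffices to bound each scalar variation (for $\mathbb K=\C$, split into real and imaginary parts to reduce to the real case). The crucial structural input is \eqref{eq:21161101}: the trace $\mathcal S_A:=\{B\in\mathcal S:B\subset A\}$ is a $\sigma$-algebra on $A$, so $T_i$ restricts there to a genuine finite signed measure. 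Arguing by contradiction, if $|T_i|(A)=\infty$ then the standard splitting lemma—choose a finite partition of $A$ whose sum of absolute values is large, then collect the pieces on which $T_i$ is positive and those on which it is negative—produces a disjoint decomposition $A=A'\cup A''$ in $\mathcal S$ with $|T_i(A')|\ge 1$ and $|T_i|(A'')=\infty$. Iterating inside the part of infinite variation yields pairwise disjoint $A_1,A_2,\dots\subset A$ in $\mathcal S$ with $|T_i(A_n)|\ge 1$ for all $n$; but $\bigcup_n A_n\subset A$ lies in $\mathcal S$, so $\sigma$-additivity of $T_i$ forces $\sum_n T_i(A_n)$ to converge and hence $T_i(A_n)\to 0$, a contradiction. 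Therefore each $|T_i|(A)$, and with it $|T|(A)$, is finite. The main obstacle is precisely this last step: the passage from $\sigma$-additivity to finiteness of the variation is where finite-dimensionality is indispensable (for a general Banach space the variation may genuinely be infinite), and the heart of the matter is the splitting lemma combined with the observation, from \eqref{eq:21161101}, that $\mathcal S$ restricts to a $\sigma$-algebra on each $A\in\mathcal S$.
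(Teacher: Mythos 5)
Your proof is correct, and its overall skeleton coincides with the paper's: finite additivity of $|T|$, then $\sigma$-additivity, then reduction to the scalar case via equivalence of norms and the component functions of $T$, and finally a contradiction argument excluding infinite variation. The difference is mainly one of presentation and of the precise mechanism in the last step. The paper's proof is by citation: additivity and $\sigma$-additivity are taken from III\,1, Lemma 6 and III\,4, Lemma 7 in \cite{DunSch57} (adapted to $\delta$-rings), and finiteness is obtained by ``arguing as in XI, Theorem 8 of \cite{La68}'', which rests on the criterion \eqref{eq:22111602} --- vanishing of $T$ along sequences $A_n \downarrow \emptyset$ --- and hence on the closure of $\mathcal{S}$ under countable intersections; the contradiction there is produced from a \emph{decreasing} sequence. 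You instead run the classical splitting/exhaustion argument for finitely-valued signed measures: the splitting lemma produces a \emph{disjoint} sequence $(A_n) \subset \mathcal{S}$ of subsets of $A$ with $|T_i(A_n)| \ge 1$, the identity \eqref{eq:21161101} keeps $\bigcup_n A_n$ inside $\mathcal{S}$, and $\sigma$-additivity then forces $T_i(A_n) \to 0$, a contradiction. Both routes exploit exactly the ``local $\sigma$-algebra'' structure of the $\delta$-ring, just through the two complementary formulations \eqref{eq:21161101} (countable unions below a set of $\mathcal{S}$) versus \eqref{eq:22111602} (decreasing sequences); what your version buys is a fully self-contained proof, at the cost of re-proving standard facts the paper outsources. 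One small wording caveat: when you say $T_i$ restricts to a ``genuine finite signed measure'' on the trace $\sigma$-algebra, ``finite'' should be read as ``finitely-valued'' (which holds automatically because $T$ is $V$-valued); finiteness of its \emph{variation} is precisely what your contradiction argument goes on to establish, so there is no circularity, but the phrasing could suggest otherwise.
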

\begin{proof}
As in III 1, Lemma 6 in \cite{DunSch57} we get that $|T|$ is additive, although $\mathcal{S}$ is just a ($\delta$-)ring. Using this and the arguments in the proof of III 4, Lemma 7 in \cite{DunSch57} it follows that $|T|$ is even $\sigma$-additive. Finally, if $V=\R^n$ (without loss of generality), we can assume that $n=1$ by equivalence of norms and by considering the component functions of $T$ which inherit the $\sigma$-additivity. Now, due to \eqref{eq:22111602} and the closure of $\mathcal{S}$ under countably many intersections, we can argue as in XI, Theorem 8 in \cite{La68} to obtain the assertion.
\end{proof}
\begin{remark} \label{24041701}
In view of the quoted proofs we observe that the previous statement remains true for any \textit{$\sigma$-subadditive} set function on $\mathcal{S}$ which is $[0,\infty)$-valued. 
\end{remark}
Unfortunately, it is impossible to formulate the \textit{Hahn-Jordan-decomposition} on $\delta$-rings. But for the case $V=\R$ we can at least consider the \textit{positive variation} $T^{+}:\mathcal{S} \rightarrow [0,\infty)$ and the \textit{negative variation} $T^{-}:\mathcal{S} \rightarrow [0,\infty)$ of the $\sigma$-additive set function $T$, defined by $T^{\pm}(A):= \frac{1}{2}(|T|(A) \pm T(A))$, respectively. Then it is clear that $T^{+}$ and $T^{-}$ are finite pre-measures with $T=T^{+}-T^{-}$ and $|T|=T^{+}+T^{-}.$ Although it was formulated for $\sigma$-algebras in \cite{DunSch57} (see III 1, Theorem 8), we immediately see that the following representations hold for every $A \in \mathcal{S}$:
\begin{equation} \label{eq:23111601}
T^{+}(A)=\sup \{T(B): B \in \mathcal{S} \text{ with } B \subset A\}
\end{equation}
and
\begin{equation} \label{eq:23111602}
T^{-}(A)=- \inf \{T(B): B \in \mathcal{S} \text{ with } B \subset A\}.
\end{equation}
\section{Infinitely-divisible random measures}
In this section we define and analyze ISRMs with values in $\mathbb{K}^m$ defined on $\delta$-Rings. Hence if we denote by $L^{0}(\Omega, \mathbb{K}^m)$ the set of all $\mathbb{K}^m$-valued random vectors defined on any abstract probability space $(\Omega, \mathcal{A}, \Pro)$, a mapping $M:\mathcal{S} \rightarrow L^{0}(\Omega,\mathbb{K}^m)$ is shortly called an independently scattered random measure (on $\mathcal{S}$ with values in $\mathbb{K}^m$), if the following conditions hold:
\begin{itemize}
\item[($RM_1$)] For every finite choice $A_1,...,A_k$ of disjoint sets in $\mathcal{S}$ the random vectors $M(A_1),...,M(A_k)$ are stochastically independent.
\item[($RM_2$)] For every sequence $(A_n) \subset \mathcal{S}$ of disjoint sets with $\cup_{n=1}^{\infty} A_n \in \mathcal{S}$ we have
\begin{equation*}
M (\cup_{n=1}^{\infty} A_n)=\summe{n=1}{\infty} M(A_n) \quad \text{almost surely (a.s.).}
\end{equation*}
\end{itemize}
By introducing the mapping $\Xi_{(m)}(z):= (\text{Re } z, \text{Im }z) \in \R^{2m}$ for $z \in \C^m$, condition $(RM_1)$ here means independence of $\Xi(M(A_1)),..., \Xi(M(A_k))$. Furthermore and with an analogous extension for $\mathbb{K}=\C$ we call such an ISRM \textit{infinitely-divisible}, if this true for (the distribution of) every random vector $M(A), A \in \mathcal{S}$. It will turn out later that it is quite natural to concentrate on infinitely-divisible random measures. In this case we get the following characterization where we first consider $\mathbb{K}=\R$:
\begin{theorem} \label{09121603}
Let $M$ be an i.d. ISRM on $\mathcal{S}$ with values in $\R^m$, where $M(A) \sim [\gamma_A, Q_A, \phi_A]$ for every $A \in \mathcal{S}$. Then we have:
\begin{itemize}
\item[(a)] The mapping $\mathcal{S} \ni A \mapsto \gamma_A \in \R^m$ is $\sigma$-additive.
\item[(b)] The mapping $\mathcal{S} \ni A \mapsto Q_A \in \Li{m}$ is $\sigma$-additive.
\item[(c)] The mapping $\mathcal{S} \ni A \mapsto \phi_A(B)$ is a finite pre-measure for every fixed Borel set $B$ which is bounded away from zero.
\end{itemize}
Conversely, for every family of triplets $ ([\gamma_A, Q_A, \phi_A])_{A \in \mathcal{S}}$ that satisfies (a)-(c) there exists an i.d. ISRM $M$ (on some suitable probability space) with $M(A) \sim [\gamma_A, Q_A, \phi_A]$ for every $A \in \mathcal{S}$. Furthermore, the finite-dimensional distributions of $M$ are uniquely determined by the latter property.
\end{theorem}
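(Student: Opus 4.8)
The plan is to run both directions off the uniqueness of the L\'{e}vy--Khintchine triplet together with \cref{23111604}. For the forward direction, first note that for disjoint $A,B\in\mathcal{S}$ properties $(RM_1)$ and $(RM_2)$ give $M(A\cup B)=M(A)+M(B)$ with $M(A),M(B)$ independent, so the log-characteristic functions satisfy $\psi_{A\cup B}=\psi_A+\psi_B$; by uniqueness of the triplet this forces $\gamma_{A\cup B}=\gamma_A+\gamma_B$, $Q_{A\cup B}=Q_A+Q_B$ and $\phi_{A\cup B}=\phi_A+\phi_B$, i.e.\ finite additivity of all three assignments (finiteness of $\phi_A$ on sets bounded away from zero being immediate from the L\'{e}vy-measure property). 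To upgrade to $\sigma$-additivity, take a disjoint sequence $(A_n)$ with $A:=\cup_n A_n\in\mathcal{S}$ and set $B_N:=\cup_{n>N}A_n\in\mathcal{S}$ via \eqref{eq:21161101}, so $B_N\downarrow\emptyset$. Finite additivity and $(RM_2)$ yield $M(B_N)=M(A)-\sum_{n\le N}M(A_n)\to 0$ a.s.\ as $N\to\infty$, hence $\mu_{B_N}\konviv\eps_0$, and \cref{23111604} gives $\gamma_{B_N}\to 0$, $Q_{B_N}\to 0$ and $\int_{\R^m}\min\{1,\norm{x}^2\}\,\phi_{B_N}(dx)\to 0$. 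The first two are (a) and (b); for (c) one bounds $\phi_{B_N}(B)\le c^{-1}\int_{\R^m}\min\{1,\norm{x}^2\}\,\phi_{B_N}(dx)\to 0$ for $B$ bounded away from zero, where $c:=\inf_{x\in B}\min\{1,\norm{x}^2\}>0$.

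For the converse, each $\mu_A:=[\gamma_A,Q_A,\phi_A]$ is a genuine i.d.\ law, and the finite additivity contained in (a)--(c) translates into $\mu_{A\cup B}=\mu_A*\mu_B$ for disjoint $A,B$ (the additivity $\phi_{A\cup B}=\phi_A+\phi_B$ on all Borel sets following from (c) on sets bounded away from zero by continuity from below, using $\phi_A(\{0\})=0$). I would then construct the finite-dimensional distributions by the atom method: given $A_1,\dots,A_k\in\mathcal{S}$, form the finitely many disjoint atoms $C_1,\dots,C_r\in\mathcal{S}$ they generate inside the ring $\mathcal{S}$, take independent $Y_1,\dots,Y_r$ with $Y_j\sim\mu_{C_j}$, and declare $M(A_i):=\sum_{C_j\subset A_i}Y_j$. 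Convolution-additivity makes this family consistent under refinement (splitting an atom $C_j$ replaces $Y_j$ by $Y_j'+Y_j''$ with $\mu_{C_j'}*\mu_{C_j''}=\mu_{C_j}$) and under permutation, so Kolmogorov's extension theorem produces a process $(M(A))_{A\in\mathcal{S}}$ with $M(A)\sim\mu_A$ and with $(RM_1)$ holding by construction.

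The main work, and the step I expect to be the real obstacle, is $(RM_2)$. Finite additivity holds a.s.\ by construction, so for disjoint $(A_n)$ with $A=\cup_n A_n\in\mathcal{S}$ it remains to show the tails $M(R_N)\to 0$, where $R_N:=\cup_{n>N}A_n\downarrow\emptyset$. By (a) and (b), $\gamma_{R_N}=\gamma_A-\sum_{n\le N}\gamma_{A_n}\to 0$ and $Q_{R_N}\to 0$. The delicate quantity is $\int_{\R^m}\min\{1,\norm{x}^2\}\,\phi_{R_N}(dx)$: cutting at a radius $\eps$, the outer part is at most $\phi_{R_N}(\{\norm{x}\ge\eps\})\to 0$ by (c), while the inner part is controlled \emph{uniformly in $N$} by the domination $\phi_{R_N}\le\phi_A$, giving $\int_{\{0<\norm{x}<\eps\}}\norm{x}^2\,\phi_{R_N}(dx)\le\int_{\{0<\norm{x}<\eps\}}\norm{x}^2\,\phi_A(dx)\to 0$ as $\eps\to 0$ because $\phi_A$ is one fixed L\'{e}vy measure. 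An $\eps/2$ argument then forces the full integral to $0$, so \cref{23111604} yields $\mu_{R_N}\konviv\eps_0$, i.e.\ $M(R_N)\to 0$ in probability. Since the $M(A_n)$ are independent, convergence in probability of the partial sums upgrades to almost sure convergence by the It\^{o}--Nisio theorem, which is exactly $(RM_2)$.

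Finally, uniqueness of the finite-dimensional distributions follows because any ISRM $M'$ with $M'(A)\sim\mu_A$ must, by finite additivity and $(RM_1)$, satisfy $M'(A_i)=\sum_{C_j\subset A_i}M'(C_j)$ a.s.\ with the $M'(C_j)$ independent and $M'(C_j)\sim\mu_{C_j}$ over the generated atoms; hence the joint law of $(M'(A_1),\dots,M'(A_k))$ is the same pushforward of $\bigotimes_j\mu_{C_j}$ used in the construction. The genuinely nontrivial points are the uniform-in-$N$ control of the near-zero L\'{e}vy integral and the passage from convergence in probability to almost sure convergence; everything else is bookkeeping with triplet-additivity and Kolmogorov consistency.
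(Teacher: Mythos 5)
Your proposal is correct and follows essentially the same route as the paper's proof: finite additivity from uniqueness of the triplet plus \cref{23111604} along sequences decreasing to $\emptyset$ for the forward direction, the atom decomposition of $A_1,\dots,A_k$ (the paper's sets $\mathcal{Z}_J^{(n)}$) with Kolmogorov consistency for existence, the same uniform domination of the near-zero L\'{e}vy integral by a single fixed L\'{e}vy measure ($\phi_{R_N}\le\phi_A$ in your notation, $\phi_{B_k}\le\phi_{B_1}$ in the paper's) for $(RM_2)$, and the same atom argument for uniqueness of the finite-dimensional distributions. The only cosmetic deviations are that you build the finite-dimensional laws directly as pushforwards of products of independent atom variables where the paper works with log-characteristic functions and Bochner's theorem, and that you invoke the It\^{o}--Nisio/L\'{e}vy equivalence where the paper cites a multivariate extension of the three-series theorem.
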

\begin{proof}
Assume first that $M$ is an infinitely-divisible ISRM. Since $M(\emptyset)=0$ a.s., the additivity of the mappings in (a)-(c) can be easily deduced from the L\'{e}vy-Khintchine-Formula and its uniqueness statement by using $(RM_1)$ and $(RM_2)$ for only finitely many sets. Then it is even clear that $\phi_{A_1 \cup \cdots \cup A_k}$ equals the measure $\phi_{A_1}+ \cdots +\phi_{A_k}$. Now let $(B_n) \subset \mathcal{S}$ be a sequence with $(B_n) \downarrow \emptyset$ and define the auxiliary sequence $C_1=\emptyset, C_n= B_{n-1} \setminus B_n$ (for $n \ge 2$) to observe that
\begin{equation*}
M(B_1)= M(\cup_{k=1}^{\infty} C_n )=\limes{k}{\infty} (M(B_1)-M(B_k)),
\end{equation*}
which leads to $M(B_k) \rightarrow 0$ a.s. Then (a) and (b) follow by \cref{23111604} together with \eqref{eq:22111602}. Similarly using Theorem 3.1.16 in \cite{thebook} we obtain (c).\\
Concerning the second part denote by $\Theta(A,\cdot)$ the log-characteristic function of the i.d. distribution on $\R^m$ with triplet $[\gamma_A,Q_A,\phi_A]$ for $A \in \mathcal{S}$. Moreover, for any $n \in \N$ and $A_1,...,A_n \in \mathcal{S}$ we define
\begin{equation*} 
\psi_{A_1,...,A_n}(t):= \summe{J \subset \{1,...,n\} }{} \Theta \left (\mathcal{Z}_J^{(n)}, \sum \nolimits_{j \in J} t_j \right),
\end{equation*}
where $t=(t_1,...,t_n) \in \R^{n \cdot m}$ and
\begin{equation*}
\mathcal{Z}_J^{(n)}:=\mathcal{Z}_{J}(A_1,...,A_n): =\begin{cases} \emptyset,  &\text{if $J = \emptyset$} \\ \left[ \bigcap_{j \in J}^{} A_j \setminus \bigcup_{l \in J^c}^{} A_l \right], &\text{if $J \ne \emptyset$}    \end{cases} \quad \in \mathcal{S}.
\end{equation*}
Then, with Lemma 3.5.9 in \cite{Jac01} for example, it easy to see that $\exp(\psi_{A_1,...,A_n}(\cdot))$ is not only continuous, but also positive semi-definite in the sense of Bochner's theorem as this is true for the functions $\exp(\Theta(A,\cdot))$ already. Then by the theorem itself we obtain the existence of a distribution $\mu_{A_1,...,A_n}$ on $\R^{n \cdot m}$ whose Fourier transform is given by $\exp(\psi_{A_1,...,A_n}(\cdot))$, in particular we have $\mu_A \sim [\gamma_A,Q_A, \phi_A]$ for all $A \in \mathcal{S}$.  Then on one hand we can check that
\begin{equation*}
\mathcal{Z}_{J}(A_1,...,A_{n+1}) \cup \mathcal{Z}_{J\cup\{n+1\}}(A_1,...,A_{n+1}) =\mathcal{Z}_{J}(A_1,...,A_{n}) 
\end{equation*}
for $A_1,...,A_{n+1} \in \mathcal{S}$ and every $J \in \mathcal{P}(\{1,...,n\}) \setminus \emptyset$, where the union is disjoint. On the other hand we can use (c) again to show that $\Theta(B_1 \cup B_2,t)=\Theta(B_1,t)+\Theta(B_2,t)$ for all $B_1,B_2 \in \mathcal{S}$ disjoint and $t \in \R^m$. Hence for $t_1,...,t_n \in \R^m$ we get with $t_{n+1}:=0$ that
\begin{align*}
\psi_{A_1,...,A_{n+1}}(t_1,...,t_n,0) &= \summe{J \subset \{1,...,n+1\} }{}  \Theta(\mathcal{Z}_J^{(n+1)}, \sum_{j \in J} t_j) \\
&= \summe{J \subset \{1,...,n\} }{}  \left[ \Theta(\mathcal{Z}_J^{(n+1)}, \sum_{j \in J} t_j) + \Theta(\mathcal{Z}_{J\cup \{n+1\}}^{(n+1)}, \sum_{j \in J} t_j) \right] \\
&= \summe{\substack{ J \subset \{1,...,n\}, \\ J \ne \emptyset }}{}  \left[ \Theta(\mathcal{Z}_J^{(n+1)}, \sum_{j \in J} t_j) + \Theta(\mathcal{Z}_{J\cup \{n+1\}}^{(n+1)}, \sum_{j \in J} t_j) \right] \\
&= \psi_{A_1,...,A_{n}}(t_1,...,t_n).
\end{align*}
Overall this mostly proves that the considered system is projective and by Kolomogorov's consistency theorem there exists a probability space $(\Omega, \mathfrak{A}, \Pro)$ and a family $M=\{M(A):A \in \mathcal{S}\}$ of random vectors with $\mathcal{L}((M(A_1),...,M(A_n)))=\mu_{A_1,...,A_n}$. For $A_1,...,A_n \in \mathcal{S}$ disjoint we have that $\mathcal{Z}_J^{(n)}=A_j$, if $J=\{j\}$ and $\mathcal{Z}_J^{(n)}=\emptyset$ else, which yields that $(RM_1)$ is fulfilled. For $(RM_2)$ we first fix $A_1,A_2 \in \mathcal{S}$ arbitrary and write 
\begin{center}
$\ft{\mathcal{L}}(M(A_1 \cup A_2)-M(A_1)-M(A_2))(t)= \ft{\mu}_{A_1 \cup A_2,A_1,A_2}(t,-t,-t), \quad t \in \R^m$ 
\end{center}
to see that $M$ is finitely additive as the right-hand side equals $1$ by construction. Thus for a sequence like given in $(RM_2)$ it suffices to show that
\begin{equation*}
M (\cup_{j=1}^{\infty} A_j) - M (\cup_{j=1}^{k} A_j)= M (\cup_{j=k+1}^{\infty} A_j) \stokonv{k} 0
\end{equation*}
by a straight-forward multivariate extension of the the three-series-theorem (see Theorem 9.7.1 in \cite{Dud02}) and by what we have shown before. If we let $B_k:=\cup_{j=k+1}^{\infty} A_j$ with $B_k \downarrow \emptyset$, it follows by (a) and (b) that $\gamma_{B_k} \rightarrow 0$ as well as that $Q_{B_k} \rightarrow 0$. Provided that
\begin{equation} \label{eq:01121601}
\limes{k}{\infty} \integral{\R^m}{}{\min \{1,\norm{x}^2\}}{\phi_{B_k}(dx)} =0,
\end{equation}
the assertion would follow via \eqref{eq:22111602}. Fix $\eps>0$ and choose $\delta>0$ sufficiently small such that 
\begin{equation*}
\integral{ \{x: \norm{x}< \delta \}  }{}{\min \{1,\norm{x}^2\}}{\phi_{B_k}(dx)} \le \integral{\{x: \norm{x}< \delta \}}{}{\min \{1,\norm{x}^2\}}{\phi_{B_1}(dx)} < \eps, \quad k \in \N
\end{equation*}
in face of $\phi_{k+1} \le \phi_{k}$ (see above), such that \eqref{eq:01121601} follows by (c) again. Finally for uniqueness we merely consider $A_1,A_2 \in \mathcal{S}$ and write
\begin{equation*}
\skp{t_1}{M(A_1)}+\skp{t_2}{M(A_2)}=\skp{t_1}{M(A_1 \setminus A_2)} + \skp{t_1+t_2}{M(A_1 \cap A_2)} + \skp{t_2}{M(A_2 \setminus A_1)}
\end{equation*}
for $t_1,t_2 \in \R^m$ and by $(RM_2)$, where the random variables on the ride side are independent due to $(RM_1)$. Now the statement can be deduced easily.
\end{proof}
Let us remark that the previous theorem as well as the following ones are similar to the corresponding, but univariate results in \cite{RajRo89}.
\begin{theorem} \label{20031705}
Let $M$ be an i.d. ISRM as before, then there exists a $\sigma$-finite measure $\lambda_M$ on $\sigma(\mathcal{S})$, called \textit{control measure of $M$}, which is uniquely determined by
\begin{equation} \label{eq:09121601}
\lambda_M(A)=|\gamma|_A + \text{tr}(Q_{A}) + \integral{\R^m }{}{\min \{1,\norm{x}^2\}}{\phi_{A}(dx)}, \quad A \in \mathcal{S}.
\end{equation}
Furthermore, for any sequence $(A_n) \subset \mathcal{S}$ we have:
\begin{itemize}
\item[(i)] $\lambda_M(A_n) \rightarrow 0$ implies $M(A_n) \rightarrow 0$ in probability.
\item[(ii)] If $M(A_n') \rightarrow 0$ in probability for every sequence $(A_n') \subset \mathcal{S}$ with $A_n' \subset A_n$, then it follows that $\lambda_M(A_n) \rightarrow 0$.
\end{itemize}
\end{theorem}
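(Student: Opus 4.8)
The plan is to treat the three summands in \eqref{eq:09121601} separately and to pass between convergence of $M(A_n)$ and the triplet data exclusively through \cref{23111604}.

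\emph{Existence and uniqueness of $\lambda_M$.} First I would show that each of the three set functions
\[
A \mapsto |\gamma|_A, \qquad A \mapsto \text{tr}(Q_A), \qquad A \mapsto \integral{\R^m}{}{\min\{1,\norm{x}^2\}}{\phi_A(dx)}
\]
is a finite pre-measure on $\mathcal{S}$. For the first, \cref{09121603}(a) gives that $A \mapsto \gamma_A$ is $\sigma$-additive and $\R^m$-valued, so \cref{09121602} shows its total variation $|\gamma|$ is a finite pre-measure. For the second, $\text{tr}$ is a bounded linear functional on $\Li{m}$, so composing with the $\sigma$-additive map of \cref{09121603}(b) preserves $\sigma$-additivity; since each $Q_A$ is positive semi-definite the result is $[0,\infty)$-valued, hence a finite pre-measure. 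For the third, additivity follows from $\phi_{A_1 \cup A_2} = \phi_{A_1}+\phi_{A_2}$ on disjoint sets (as in the proof of \cref{09121603}), finiteness from the L\'evy property, and vanishing along arbitrary $A_n \downarrow \emptyset$ by repeating the argument already used for \eqref{eq:01121601} (split into $\norm{x}<\delta$ via the monotonicity $\phi_{A_{n+1}}\le \phi_{A_n}$ and $\norm{x}\ge\delta$ via (c)). Summing, $\lambda_M$ is a finite pre-measure on the $\delta$-ring $\mathcal{S}$; extending it by Carath\'eodory to $\sigma(\mathcal{S})$ and invoking an exhausting sequence $(S_n)\subset\mathcal{S}$ with $\lambda_M(S_n)<\infty$ yields a $\sigma$-finite measure, whose uniqueness is the standard uniqueness of extensions of $\sigma$-finite pre-measures.

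\emph{Part (i).} Since all three summands are nonnegative, $\lambda_M(A_n)\to 0$ forces each to vanish. Then $\norm{\gamma_{A_n}}\le |\gamma|_{A_n}\to 0$, while $\text{tr}(Q_{A_n})\to 0$ together with positive semi-definiteness gives $Q_{A_n}\to 0$, and the L\'evy integral tends to $0$ by assumption. These are exactly the hypotheses of the sufficiency direction of \cref{23111604}, so $M(A_n)$ converges weakly to $\eps_0$, i.e.\ $M(A_n)\stokonv{n} 0$.

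\emph{Part (ii).} Applying the hypothesis to the constant choice $A_n'=A_n$ gives $M(A_n)\stokonv{n} 0$, whence the necessity direction of \cref{23111604} immediately delivers $Q_{A_n}\to 0$ (so $\text{tr}(Q_{A_n})\to 0$) and $\integralzwei{\R^m}{}{\min\{1,\norm{x}^2\}}{\phi_{A_n}(dx)}\to 0$; these two terms carry no cancellation, being built from the genuinely nonnegative objects $Q_A$ and $\phi_A$. The remaining, and genuinely delicate, term is $|\gamma|_{A_n}$: here $M(A_n)\to 0$ yields only $\gamma_{A_n}\to 0$, which is far weaker than $|\gamma|_{A_n}\to 0$, because the total variation detects cancellations inside $A_n$ that the single value $\gamma_{A_n}$ cannot see. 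This is the main obstacle, and it is exactly where the full strength of the hypothesis, ranging over \emph{all} subsequences $A_n'\subset A_n$, is needed.

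I would handle this term componentwise and by contradiction. Writing $\gamma^{(i)}$ for the $i$-th component, it suffices to prove $|\gamma^{(i)}|_{A_n}\to 0$ for each $i$, since the Euclidean and componentwise total variations are comparable. If this fails for some $i$, then along a subsequence $|\gamma^{(i)}|_{A_{n_k}}\ge c>0$; as $|\gamma^{(i)}|=(\gamma^{(i)})^{+}+(\gamma^{(i)})^{-}$, after passing to a further subsequence one variation, say $(\gamma^{(i)})^{+}$, satisfies $(\gamma^{(i)})^{+}(A_{n_k})\ge c/2$. By the representation \eqref{eq:23111601} I can choose $B_{n_k}\in\mathcal{S}$ with $B_{n_k}\subset A_{n_k}$ and $\gamma^{(i)}_{B_{n_k}}\ge c/4$. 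Completing $(B_{n_k})$ to a sequence $(A_n')$ by setting $A_n'=\emptyset$ off the subsequence gives $A_n'\subset A_n$, so the hypothesis forces $M(A_n')\stokonv{n} 0$ and in particular $M(B_{n_k})\to 0$ in probability; but then the necessity direction of \cref{23111604} would give $\gamma^{(i)}_{B_{n_k}}\to 0$, contradicting $\gamma^{(i)}_{B_{n_k}}\ge c/4$. The negative-variation case is symmetric via \eqref{eq:23111602}. Hence $|\gamma|_{A_n}\to 0$, and combined with the two easy terms this yields $\lambda_M(A_n)\to 0$.
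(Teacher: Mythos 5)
Your proposal is correct and follows essentially the same route as the paper: the same three-way decomposition into finite pre-measures via \cref{09121602} and \cref{09121603}, the same use of \cref{23111604} for part (i), and for the delicate $|\gamma|$-term in part (ii) the same key idea of selecting subsets of $A_n$ via the variation representations \eqref{eq:23111601}--\eqref{eq:23111602} and feeding them into the subset hypothesis. The only differences are cosmetic: you phrase the $|\gamma|$-argument as a contradiction along subsequences where the paper argues directly with an $\eps$-estimate, and you re-run the $\delta$-splitting argument for the L\'evy-integral term where the paper simply cites the a.s.\ convergence $M(B_n)\rightarrow 0$ established in the proof of \cref{09121603}.
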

\begin{proof}
We have to show that \eqref{eq:09121601} defines a finite pre-measure on $\mathcal{S}$, then $\lambda_M$ would be its unique extension on $\sigma(\mathcal{S})$: Non-negativity is obvious. Morevover $|\gamma|$ is finite by \cref{09121602} and \cref{09121603} (a). The mapping $A \mapsto \text{tr}(Q_A)$ preserves the $\sigma$-additivity in \cref{09121603} (b) by continuity of the trace-mapping $\text{tr}(\cdot)$. Finally we could already show that $A \mapsto \phi_A$ is additive, thus as before it remains to show that
\begin{equation} \label{eq:09121604}
\integral{\R^m}{}{\min \{1,\norm{x}^2\}}{\phi_{B_n}(dx)} \rightarrow 0 
\end{equation} 
for any sequence $(B_n) \subset \mathcal{S}$ with $B_n \downarrow \emptyset$. Actually, the previous proof even revealed that $M(B_n) \rightarrow 0$ a.s., such that \eqref{eq:09121604} follows by \eqref{eq:21111601}. \\
Now, if $\lambda_M(A_n) \rightarrow 0$ for a sequence as above, the same holds for each of the corresponding expressions in \eqref{eq:09121601} which allows us to use \cref{23111604} again. Because of $\norm{\gamma_{A_n}} \le |\gamma|_{A_n}$ and since $\text{tr}(Q_{A_n}) \rightarrow 0$ implies $Q_{A_n} \rightarrow 0$ we get $M(A_n) \rightarrow 0$ in probability. Conversely, the proof of $\lambda_M(A_n) \rightarrow 0$ reduces to the verification of $|\gamma|_{A_n} \rightarrow 0$ after using similar arguments as before and especially the assumption that $M(A_n) \rightarrow 0$ in probability. Consider the component functions $\gamma^{(1)},...,\gamma^{(m)}$ and fix some $\eps>0$ and $j \in \{1,...,m\}$, where \cref{09121603} (a) and the combination of \eqref{eq:23111601}-\eqref{eq:23111602} guarantee the existence of sequences $(A_{n,i})_n \subset \mathcal{S}$ with $A_{n,i} \subset A_n$ for $i=1,2$ with
\begin{equation*}
|\gamma^{(j)}|_{A_n} \le \gamma^{(j)}_{A_{n,1}} - \gamma^{(j)}_{A_{n,2}}  + \eps, \quad n \in \N.
\end{equation*}
Now one can use the given assumption together with \cref{23111604} again to see that $\gamma_{A_{n,i}}^{(j)} \rightarrow 0$ for $i=1,2$ which yields $|\gamma^{(j)}|_{A_n} \rightarrow 0$ and therefore the assertion of (ii), see the proof of \cref{09121602}.
\end{proof}
Next we want to extend Lemma 2.3 in \cite{RajRo89} which yields a construction principle for ISRMs in \cref{08031703} (b) below : Given measurable spaces $(\Omega_1,\mathcal{A}_1)$ and $(\Omega_2,\mathcal{A}_2)$, a mapping $\kappa: \Omega_1 \times \mathcal{A}_2 \rightarrow [0,\infty]$ is called a \textit{simultaneous $\sigma$-finite transition function from $\Omega_1$ to $\Omega_2$}, if the following conditions hold:
\begin{itemize}
\item[(i)] $\omega_1 \mapsto \kappa(\omega_1,A_2)$ is $\mathcal{A}_1$-$\B([0,\infty])$-measurable for every $A_2 \in \mathcal{A}_2$.
\item[(ii)] $A_2 \mapsto \kappa(\omega_1,A_2)$ is a measure on $(\Omega_2,\mathcal{A}_2)$ for every $\omega_1 \in \Omega_1$. Moreover there exist sequences $(A_{2,n})\subset \mathcal{A}_2$ and $(r_n) \subset [0,\infty)$ such that
\begin{equation} \label{eq:09051701}
\bigcup_{n=1}^{\infty}A_{2,n}=\Omega_2 \quad \text{and} \quad \forall n \in \N \, \forall \omega_1 \in \Omega_1: \kappa(\omega_1,A_{2.n}) \le r_n.
\end{equation}
\end{itemize}
Furthermore, if $\kappa(\omega_1,\cdot)$ is a probability measure for every $\omega_1 \in \Omega$, we say that $\kappa$ is \textit{Markovian}.
\begin{prop}
Let $(\Omega_1,\mathcal{A}_1, \nu)$ be a $\sigma$-finite measure space and $\kappa$ a simultaneous $\sigma$-finite transition function from $\Omega_1$ to $\Omega_2$. Then there exists a unique $\sigma$-finite measure $\nu \odot \kappa$ on the product space $(\Omega_1 \times \Omega_2, \mathcal{A}_1 \otimes \mathcal{A}_2)$ with the property
\begin{equation} \label{eq:06031701}
(\nu \odot \kappa)(A_1 \times A_2) = \integral{A_1}{}{\kappa(\omega_1,A_2)}{\nu(d \omega_1)} \quad \text{for all $A_1 \in \mathcal{A}_1,A_2 \in \mathcal{A}_2$}.
\end{equation}
Moreover, we have
\begin{equation} \label{eq:06031702}
\integral{\Omega_1 \times \Omega_2}{}{f(x)}{(\nu \odot \kappa) (dx)}= \integral{\Omega_1}{}{ \integral{\Omega_2}{}{f(\omega_1,\omega_2)}{\kappa(\omega_1,d \omega_2)}  }{\nu(d \omega_1)}
\end{equation}
for every measurable $f:\Omega_1 \times \Omega_2 \rightarrow \R$ that is non-negative or integrable w.r.t. $\mu \odot \kappa$.
\end{prop}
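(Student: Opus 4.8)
The plan is to mimic the classical construction of a measure from a transition kernel, the only novelty being that $\kappa$ is merely $\sigma$-finite in the second variable; the uniform bound in \eqref{eq:09051701} is exactly what lets me reduce to the finite case. For $C \in \mathcal{A}_1 \otimes \mathcal{A}_2$ and $\omega_1 \in \Omega_1$ write $C_{\omega_1} := \{\omega_2 \in \Omega_2 : (\omega_1,\omega_2) \in C\}$ for the $\omega_1$-section, which always lies in $\mathcal{A}_2$. I would then like to define
\[
(\nu \odot \kappa)(C) := \integral{\Omega_1}{}{\kappa(\omega_1, C_{\omega_1})}{\nu(d\omega_1)},
\]
but first I must check that $\omega_1 \mapsto \kappa(\omega_1, C_{\omega_1})$ is $\mathcal{A}_1$-measurable.

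First I would disjointify the covering sets from \eqref{eq:09051701}, so that $(A_{2,n})$ is a disjoint sequence with $\cup_n A_{2,n} = \Omega_2$ and $\kappa(\omega_1, A_{2,n}) \le r_n$ for all $\omega_1$ (monotonicity preserves the bound under shrinking). Setting $\kappa_n(\omega_1, A_2) := \kappa(\omega_1, A_2 \cap A_{2,n})$ gives finite transition functions, uniformly bounded by $r_n$, with $\kappa(\omega_1, \cdot) = \summe{n=1}{\infty} \kappa_n(\omega_1, \cdot)$. For each fixed $n$ I would run a Dynkin argument: the class $\mathcal{D}_n$ of all $C$ in the product $\sigma$-algebra for which $\omega_1 \mapsto \kappa_n(\omega_1, C_{\omega_1})$ is measurable contains every measurable rectangle $A_1 \times A_2$ — indeed $\kappa_n(\omega_1, (A_1\times A_2)_{\omega_1}) = \indikatorzwei{A_1}(\omega_1)\,\kappa_n(\omega_1, A_2)$ is measurable by (i) — and these rectangles form a $\pi$-system generating $\mathcal{A}_1 \otimes \mathcal{A}_2$. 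Since $\kappa_n$ is finite, $\mathcal{D}_n$ is closed under proper differences and under increasing unions (monotone convergence), so it is a $\lambda$-system and hence equals $\mathcal{A}_1 \otimes \mathcal{A}_2$. Summing, $\kappa(\omega_1, C_{\omega_1}) = \summe{n=1}{\infty} \kappa_n(\omega_1, C_{\omega_1})$ is measurable as a countable sum of measurable functions.

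With measurability in hand, $(\nu\odot\kappa)(\emptyset)=0$ and $\sigma$-additivity follow from the monotone convergence theorem applied to the sections of a disjoint union, so $\nu\odot\kappa$ is a measure satisfying \eqref{eq:06031701}. For $\sigma$-finiteness I write $\Omega_1 = \cup_k \Omega_{1,k}$ with $\nu(\Omega_{1,k}) < \infty$ and use the $(A_{2,n})$ above: the rectangles $\Omega_{1,k}\times A_{2,n}$ cover $\Omega_1\times\Omega_2$ and $(\nu\odot\kappa)(\Omega_{1,k}\times A_{2,n}) \le r_n\,\nu(\Omega_{1,k}) < \infty$. Uniqueness is then immediate: any two measures obeying \eqref{eq:06031701} agree on the rectangle $\pi$-system, which generates the product $\sigma$-algebra, and both are $\sigma$-finite there, so they coincide by the standard uniqueness theorem for measures.

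Finally, the integral formula \eqref{eq:06031702} is proved by the usual bootstrapping. For $f = \indikatorzwei{C}$ it is precisely \eqref{eq:06031701} read through the definition of $\nu\odot\kappa$; by linearity it extends to non-negative simple functions, by monotone convergence to arbitrary non-negative measurable $f$, and by splitting $f = f^{+} - f^{-}$ to every $f$ integrable w.r.t. $\nu\odot\kappa$ (the inner integral $\omega_1\mapsto \int_{\Omega_2} f(\omega_1,\omega_2)\,\kappa(\omega_1,d\omega_2)$ is itself measurable by the same monotone-class reasoning carried along the approximation). I expect the only genuine obstacle to be the measurability step for general $C$; this is where the finiteness reduction via \eqref{eq:09051701} is indispensable, since the difference step of the Dynkin argument breaks down for genuinely infinite $\kappa(\omega_1,\cdot)$.
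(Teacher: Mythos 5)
Your proof is correct and takes essentially the same route as the paper: both define $\nu \odot \kappa$ by the iterated integral of indicator functions, $C \mapsto \int_{\Omega_1}\int_{\Omega_2} \mathds{1}_C(\omega_1,\omega_2)\,\kappa(\omega_1,d\omega_2)\,\nu(d\omega_1)$, and both reduce to the finite case via the covering sequence $(A_{2,n})$ from \eqref{eq:09051701} together with a $\sigma$-finite decomposition of $\nu$. The only difference is one of self-containedness: you carry out the finite-case ingredients yourself (the Dynkin measurability argument, the monotone-class bootstrapping, the $\pi$-system uniqueness), whereas the paper delegates exactly these steps to the finite-kernel results 14.23 and 14.29 in Klenke's book and glues the finite pieces $\nu^{(i)} \odot \kappa^{(j)}$ over an enumeration of the rectangles.
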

\begin{proof}
Choose $(A_{1,n}) \subset \mathcal{A}_1$ disjoint with $\cup_{n=1}^{\infty} A_{1,n}=\Omega_1$ and $\nu(A_{1,n})< \infty$ for all $n \in \N$. Let $\nu^{(n)}(\cdot):= \nu(\cdot \cap A_{1,n})$. Similarly $\kappa^{(n)}(\omega_1,\cdot):=\kappa(\omega_1,\cdot \cap A_{2,n})$ is a finite transition function with $(A_{2,n})$ from \eqref{eq:09051701} for every $\omega_1 \in \Omega_1$ and $n \in \N$. As the assertion is well-known for $\nu$ and $\kappa$ being finite (see 14.23 and 14.29 in \cite{Kle08}), one easily checks that it is enough to define
\begin{equation*}
(\nu \odot \kappa)(C):= \integral{\Omega_1}{}{ \integral{\Omega_2}{}{\indikator{C}{\omega_1,\omega_2}     }{\kappa(\omega_1, d \omega_2) } }{ \nu (d \omega_1)}, \quad C \in \mathcal{A}_1 \otimes \mathcal{A}_2.
\end{equation*}
More precisely we can consider $C_n:= A_{1,\pi_1(n)} \times A_{2,\pi_2(n)}$ with a suitable mapping $\pi=(\pi_1,\pi_2):\N \rightarrow \N^2$ which is one-to-one. Then $(\nu \odot \kappa)(\cdot \cap C_n)$ is finite under the given assumption on $\kappa$ and moreover equals $\nu^{(\pi_1(n))} \odot \kappa^{(\pi_2(n))}$ for every $n \in \N$.  
\end{proof}
\begin{theorem} \label{08031703}
Let $\mathcal{S}$ be a $\delta$-ring as above and consider the $\sigma$-algebra $\sigma(\mathcal{S})$.
\begin{itemize}
\item[(i)] For every i.d. ISRM $M$ on $\mathcal{S}$ with values in $\R^m$ there exists a simultaneous $\sigma$-finite transition function $\rho_M$ from $S$ to $\R^m$ with $(\lambda_M \odot \rho_M)(A \times B)= \phi_A(B)$ for every $A \in S$ and $B \in \B(\R^m)$, where $\phi_A$ is the L\'{evy} measure of $M(A)$. Here $\rho_M$ is uniquely determined $\lambda_M$-almost everywhere (a.e.) and can be chosen such that
\begin{equation} \label{eq:07031701}
\integral{\R^m}{}{\min \{1,\norm{x}^2\} }{\rho_M(s,dx)} \le 1 \quad \text{for every $s \in S$.}
\end{equation}
\item[(ii)] Conversely, let $\lambda$ be a measure on $S$ which is finite on $\mathcal{S}$ and $\rho$ a transition function from $S$ to $\R^m$ fulfilling \eqref{eq:07031701}, i.e. being simultaneous $\sigma$-finite. Then there exists an ISRM $M$ with $\lambda=\lambda_M$ and $\rho=\rho_M$ (in the previous sense). 
\end{itemize}
\end{theorem}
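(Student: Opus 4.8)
The plan is to treat the two directions separately. For (i) the task reduces, after suitable weighting, to a disintegration of the family of Lévy measures $(\phi_A)_{A\in\mathcal S}$ against the control measure $\lambda_M$ furnished by \cref{20031705}; for (ii) it amounts to feeding a carefully chosen family of triplets into the converse half of \cref{09121603}.

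To set up (i) I would pass to the finite weighted measures $\tilde\phi_A(\cdot):=\int_{\cdot}\min\{1,\norm{x}^2\}\,\phi_A(dx)$. By \cref{09121603}(c) together with monotone convergence, for each fixed Borel set $B$ the map $A\mapsto\tilde\phi_A(B)$ is a finite pre-measure, which I extend to a measure $\nu_B$ on $\sigma(\mathcal S)$. Since $\tilde\phi_A(\R^m)\le\lambda_M(A)$ by \eqref{eq:09121601}, we have $\nu_B\le\nu_{\R^m}\le\lambda_M$, so each $\nu_B$ is absolutely continuous with respect to $\lambda_M$ with a Radon--Nikodym density $h_B:=d\nu_B/d\lambda_M\in[0,1]$ ($\lambda_M$-a.e.); moreover $\nu_{\{0\}}=0$ because $\phi_A(\{0\})=0$.

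The crux, and the step I expect to be the main obstacle, is to assemble the densities $(h_B)_B$ into a genuine transition kernel $K(s,\cdot)$ with $K(s,\R^m)\le1$ and $\nu_B(A)=\int_A K(s,B)\,\lambda_M(ds)$. Here I would fix a countable, $\cap$-stable algebra $\mathcal E$ generating $\B(\R^m)$ (say finite unions of rational boxes) and, localizing along an increasing sequence $(S_n)\subset\mathcal S$ with $\bigcup_n S_n=S$ on which $\lambda_M$ is finite, invoke the existence of regular conditional distributions on the Polish space $\R^m$. Concretely one checks that on a single $\lambda_M$-conull set the finitely additive, $[0,1]$-valued set function $E\mapsto h_E(s)$ is continuous from above at $\emptyset$ along $\mathcal E$, hence extends by Carath\'eodory to a sub-probability measure $K(s,\cdot)$; the countably many a.e. identities can be arranged to hold simultaneously, and setting $K(s,\cdot):=0$ off the conull set makes $K$ jointly measurable. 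The identity $\nu_B(A)=\int_A K(s,B)\,\lambda_M(ds)$ then passes from $\mathcal E$ to all of $\B(\R^m)$ by a monotone-class argument. Given $K$, I would define $\rho_M(s,B):=\int_B \min\{1,\norm{x}^2\}^{-1}\,K(s,dx)$, which is licit since $K(s,\{0\})=0$. Then $\int_{\R^m}\min\{1,\norm{x}^2\}\,\rho_M(s,dx)=K(s,\R^m)\le1$, which is \eqref{eq:07031701} and, covering $\R^m$ by $\{0\}\cup\{\norm{x}\ge1\}$ and the sets $\{\norm{x}\ge1/n\}$ (on which $\min\{1,\norm{x}^2\}^{-1}\le n^2$), also yields the simultaneous $\sigma$-finiteness. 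A short computation with \eqref{eq:06031702} shows $(\lambda_M\odot\rho_M)(A\times B)=\int_A\rho_M(s,B)\,\lambda_M(ds)=\phi_A(B)$, as the weight cancels the factor $\min\{1,\norm{x}^2\}$ in $K$; uniqueness $\lambda_M$-a.e. follows by testing this identity against a countable generating family of Borel sets bounded away from $0$ and invoking uniqueness of Radon--Nikodym densities on each $S_n$.

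For (ii) I would run the preceding proposition in the forward direction and then apply the converse part of \cref{09121603}. After discarding, if necessary, the atom at $0$ (so that without loss of generality $\rho(s,\{0\})=0$, matching the representative produced in (i)), I set $\phi_A(B):=(\lambda\odot\rho)(A\times B)=\int_A\rho(s,B)\,\lambda(ds)$; then $\phi_A(\{0\})=0$ and $\int\min\{1,\norm{x}^2\}\,\phi_A(dx)=\int_A\big(\int\min\{1,\norm{x}^2\}\,\rho(s,dx)\big)\,\lambda(ds)\le\lambda(A)<\infty$, so each $\phi_A$ is a Lévy measure and (c) holds exactly as in (i). To force $\lambda_M=\lambda$ I would absorb the defect $d(s):=1-\int\min\{1,\norm{x}^2\}\,\rho(s,dx)\ge0$ into a normal component, taking $\gamma_A:=0$ and $Q_A:=\tfrac1m\big(\int_A d(s)\,\lambda(ds)\big)I_m$, which is symmetric, positive semi-definite, and $\sigma$-additive in $A$, so (a) and (b) hold. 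Then \eqref{eq:09121601} gives $\lambda_M(A)=\operatorname{tr}(Q_A)+\int\min\{1,\norm{x}^2\}\,\phi_A(dx)=\int_A d\,d\lambda+\int_A(1-d)\,d\lambda=\lambda(A)$, and the converse of \cref{09121603} produces an i.d. ISRM $M$ with these triplets. By construction $\rho$ is the disintegration of $(\phi_A)$ against $\lambda=\lambda_M$, so the uniqueness clause of (i) yields $\rho_M=\rho$ $\lambda$-a.e., completing the argument.
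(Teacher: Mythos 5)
Your proposal is correct and follows essentially the same route as the paper: in (i) the same weighting by $\min\{1,\norm{x}^2\}$, disintegration of the resulting finite (bi-)measure against $\lambda_M$, and unweighting by $\min\{1,\norm{x}^2\}^{-1}$ on $\R^m\setminus\{0\}$ — the only difference being that the paper delegates the kernel-existence step to Proposition 2.4 of \cite{RajRo89} (plus the measurable isomorphism $\R^m\cong\R$), which you instead carry out by hand via the standard regular-conditional-distribution argument — and in (ii) the same reduction to the converse part of \cref{09121603} with $\phi_A(B)=\int_A\rho(s,B)\,\lambda(ds)$. The one small deviation is that in (ii) you absorb the defect $\lambda(A)-\int_{\R^m}\min\{1,\norm{x}^2\}\,\phi_A(dx)$ into a Gaussian component $Q_A$ through its trace, whereas the paper puts it into the shift, taking the triplets $[\gamma_A,0,\phi_A]$ with $\gamma_A=\bigl(\lambda(A)-\int_{\R^m}\min\{1,\norm{x}^2\}\,\phi_A(dx)\bigr)e_1$; either choice makes \eqref{eq:09121601} return $\lambda_M=\lambda$, so both are equally valid.
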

\begin{proof}
Assume the sequence $(S_n)\subset \mathcal{S}$ to be disjoint for this proof. Then, as in the proof of \cref{20031705}, we see that $Q_0^{*}(A,B):= \int_B \min\{1,\norm{x}^2\} \, \phi_A(dx)$ is a finite pre-measure on $\mathcal{S}$ for any fixed Borel set $B\subset \R^m$ and we denote its unique extension towards a $\sigma$-finite measure on $\sigma(\mathcal{S})$ by $Q_0(\cdot,B)$. Hence for $A \in \sigma(\mathcal{S})$ and $(B_k)\subset \B(\R^m)$ disjoint we observe by \cref{15031702} that
\begin{equation*}
Q_0 \left (A, \bigcup_{k=1}^{\infty}B_k \right)=  \summe{n=1}{\infty} \summe{k=1}{\infty} Q_0^{*}(A \cap S_n,B_k)=  \summe{k=1}{\infty}  \summe{n=1}{\infty} Q_0^{*}(A \cap S_n,B_k) =   \summe{k=1}{\infty}  Q_0(A,B_k).
\end{equation*}
Consequently the assumptions of Proposition 2.4 in \cite{RajRo89} are fulfilled and by a slight refinement (in particular $(\R^m, \B(\R^m))$ and $(\R, \B(\R))$ are isomorphic as measurable spaces) we get the existence of a Markovian transition function $\kappa$ from $S$ to $\R^m$ such that $Q_0(A,B)=(\lambda_0 \odot \kappa)(A \times B)$ for every $A \in \sigma(\mathcal{S})$ and $B \in \B(\R^m)$, where $\lambda_0(\cdot):=Q_0(\cdot,\R^m) \le \lambda_M(\cdot)$. Let $\tau_0$ be a $\lambda_M$-derivative of $\lambda_0$ with $\tau_0(s) \le 1$ for every $s \in S$ and set  
\begin{equation*} 
\rho_M(s,dx):= \tau_0(s) \cdot  \min \{1,\norm{x}^2\}^{-1} \cdot \indikator{\R^m \setminus \{0\} }{x} \kappa(s,dx), \quad s \in S.
\end{equation*}
This shows \eqref{eq:07031701}. Hence the following calculation, which is valid for every $A \in \mathcal{S},B \in \B(\R^m)$ and benefits from the simplicity of the integrand, yields
\allowdisplaybreaks
\begin{align*}
\integral{A}{}{\rho_M(s,B)}{\lambda_M(ds)} &= \integral{A}{}{ \integral{B \setminus \{0\} }{}{(\min\{1,\norm{x}^2\} )^{-1} }{\kappa(s,dx)} }{\lambda_0(ds)} \\
&= \integral{A \times (B \setminus \{0\})}{}{(\min\{1,\norm{x}^2\} )^{-1}}{(\lambda_0 \odot \kappa)(ds,dx)} \\
&= \integral{B \setminus \{0\}}{}{(\min\{1,\norm{x}^2\} )^{-1}}{Q_0^{*}(A,dx)} \\
&= \phi_A(B).
\end{align*}
The uniqueness of $\rho_M$ follows by the Radon-Nikod\'{y}m theorem after countably many unions of null sets by considering the generator $\{M_1 \times \cdots \times M_m : M_j \in \mathcal{M} \}$ of $\B(\R^m)$ with
\begin{equation*}
\mathcal{M}:= \{ \{0\} \cup (-\infty,q_1] \cup [q_2,\infty) : q_1 \in \Q_{<0}, q_2 \in \Q_{>0} \}.
\end{equation*}
Conversely, the assumptions in (ii) ensure that $\phi_A(B):=\int_A \rho(s,B) \, \lambda(ds)$ with
\begin{equation*}
\integral{\R^m}{}{\min \{1, \norm{x}^2\} }{\phi_A(dx)}=\integral{A}{}{\integral{\R^m}{}{\min \{1,\norm{x}^2\} }{\rho(s,dx)}}{\lambda(ds)} \le \lambda(A)
\end{equation*}
is a L\'{evy} measure on $\R^m$ for every $A \in \mathcal{S}$, whereas the total variation of
\begin{equation*}
\mathcal{S} \ni A \mapsto \gamma_A:= \left (\lambda(A) - \integral{\R^m}{}{\min \{1,\norm{x}^2\}}{\phi_A(dx)} \right) e_1
\end{equation*}
is given by the non-negative expression in brackets for every $A \in \mathcal{S}$ (notice \eqref{eq:07031701} again). Here $e_j$ generally denotes the $j$-th unit vector. Now we can obviously use \cref{09121603} for the triplets $[\gamma_A,0,\phi_A]$ to obtain the assertion.
\end{proof}
\begin{prop} \label{29031701}
Let $M$ be an $\R^m$-valued ISRM on $\mathcal{S}$ with $M(A) \sim [\gamma_A,Q_A,\phi_A]$ for $A \in \mathcal{S}$.
\begin{itemize}
\item[(i)] There are $\sigma(\mathcal{S})$-measurable mappings $\alpha_M:S \rightarrow \R^m$ and $\beta_M: S \rightarrow \Li{m}$ such that the following integrals exist (component-wise) with
\begin{equation} \label{eq:08031701}
\integral{A}{}{\alpha_M(s)}{\lambda_M(ds)}=\gamma_A, \qquad \integral{A}{}{\beta_M(s)}{\lambda_M(ds)}=Q_A
\end{equation}
for every $A \in \mathcal{S}$. $\alpha_M$ and $\beta_M$ are uniquely determined $\lambda_M$-a.e. by \eqref{eq:08031701}.
\item[(ii)] $\beta_M(s)$ is symmetric and positive semi-definite $\lambda_M$-a.e.
\item[(iii)] The mapping
\begin{equation} \label{eq:08031702}
\R^m \ni t \mapsto \integral{A}{}{K_M(t,s)}{\lambda_M(ds)}
\end{equation}
is the log-characteristic function of $M(A)$ for every $A \in \mathcal{S}$, where $K_M:  \R^m \times S \rightarrow \C$ is defined by 
\begin{equation} \label{eq:18081701}
K_M(t,s)=\im \skp{\alpha_M(s)}{t} - \frac{1}{2} \skp{\beta_M(s) t}{t} + \integral{\R^m}{}{ \left(\text{e}^{\im \skp{t}{x}} -1- \frac{\im \skp{t}{x}}{1+\norm{x}^2}   \right)}{\rho_M(s,dx)}.
\end{equation}
\end{itemize}
\begin{proof}
\begin{itemize}
\item[(i)] We start with a general observation: Consider $T:\mathcal{S} \rightarrow \R$ $\sigma$-additive, then $|T|$ can be uniquely extended to a $\sigma$-finite measure $\widehat{|T|}$ where we assume that $\widehat{|T|} \ll \lambda_M$. Hence the same holds for the extensions $\widehat{T^{+}}$ of $T^{+}$ and $\widehat{T^{-}}$ of $T^{-}$ such that the Radon-Nikod\'{y}m theorem provides measurable, $[0,\infty]$-valued mappings $f^{\pm}$ with $\widehat{T^{\pm}}(A)=\int_A f^{\pm}(s) \, \lambda_M(ds)$ for $A \in \sigma(\mathcal{S})$. Choose $(S_n) \subset \mathcal{S}$ disjoint with $\cup_{n=1}^{\infty} S_n=S$. Then $f^{+} \indikatorzwei{S_n}$ and $f^{-} \indikatorzwei{S_n}$ are finite $\lambda_M$-a.e. Hence there are $\lambda_M$- null sets $N^{+}$ and $N^{-}$ such that $f^{+} \indikatorzwei{N^{+}}$ and $f^{+} \indikatorzwei{N^{+}}$ are finite, preserving the integral relation above instead of $f^{\pm}$, respectively. Then $f:=f^{+} \indikatorzwei{N^{+}}-f^{+} \indikatorzwei{N^{+}}$ is $\lambda_M$-integrable over every set $A \in \mathcal{S}$ with value $T(A)$. Thus the mappings $\alpha_M$ and $\beta_M$ can be obtained by using the previous method for each of its components, where $|Q| \le \lambda_M$ (on $\mathcal{S}$) and therefore $\widehat{|Q|} \ll \lambda_M$, which can be shown similarly as in the proof of \cref{20031705}.
\item[(ii)] In view of \cref{15031702} we observe that $A \mapsto \skp{Q_{A \cap S_n}x}{x}$ is a finite measure on $\sigma(\mathcal{S})$  while the Cauchy-Schwarz inequality yields that this measure is also absolutely continuous w.r.t $\lambda_M$. At the same time we know by (i) that $\skp{\beta_M(\cdot)x}{x}\indikator{S_n}{\cdot}$ is a corresponding $\lambda_M$-derivative which has to be non-negative $\lambda_M$-a.e. due to the Radon-Nikod\'{y}m theorem. Therefore we have $\skp{\beta_M(\cdot) x}{x} \ge 0$ except a $\lambda_M$-null set and for all $x \in \Q^m$, which finally means that $\beta_M(\cdot)$ is positive semi-definite $\lambda_M$-a.e. by continuity of the inner product. The symmetry follows if we consider the components $Q^{i,j}$ of $Q$. In particular we see that $A \mapsto (Q^{i,j}_{A \cap S_n}-Q^{j,i}_{A \cap S_n} )$ equals the zero measure on $\sigma(\mathcal{S})$ for every $n \in \N$ as $Q_{A \cap S_n}$ is symmetric.
\item[(iii)] The $\lambda_M$-integrability of $K_M(t,\cdot)$ and \eqref{eq:08031702} are almost obvious (see (i) and remember that $M(A) \sim [\gamma_A,Q_A,\phi_A]$). Using \cref{08031703} and \eqref{eq:06031702} it is easy to see that the following integral exists.
\begin{align*}
 \integral{A}{}{ \integral{\R^m}{}{ h(t,x) }{\rho_M(s,dx)}  }{\lambda_M(ds)} &= \integral{S \times \R^m}{}{h(t,x)  \indikator{A}{s} }{(\lambda_M \odot \rho_M)(ds,dx)} \\
&= \integral{\R^m}{}{h(t,x) }{\phi_A(dx)},
\end{align*}
where the last step is similar as before and $h(t,x)$ denotes the integrand used in the definition of $K_M$. 
\end{itemize}
\end{proof}
\end{prop}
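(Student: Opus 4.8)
The plan is to obtain (i) by a componentwise Radon--Nikod\'ym argument after extending the relevant variations to $\sigma(\mathcal{S})$, to deduce (ii) from the fact that densities of non-negative (resp.\ vanishing) scalar measures are non-negative (resp.\ zero) a.e., and to verify (iii) by feeding the integral relations of (i) into the Fubini-type formula \eqref{eq:06031702}.

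For (i) I would fix a component $\gamma^{(j)}$ of $\gamma$. By \cref{09121603} (a) it is $\sigma$-additive on $\mathcal{S}$, and from the definition \eqref{eq:09121601} of $\lambda_M$ one has $|\gamma^{(j)}|_A \le |\gamma|_A \le \lambda_M(A)$ for every $A \in \mathcal{S}$, since a component is bounded in absolute value by the norm. I would then pass to the positive and negative variations $(\gamma^{(j)})^{\pm}$, which are finite pre-measures by \cref{09121602}, and extend them to $\sigma$-finite measures on $\sigma(\mathcal{S})$ as in \cref{20031705}; both are dominated by $\lambda_M$, so Radon--Nikod\'ym yields $[0,\infty]$-valued densities $f^{\pm}$. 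Along a disjoint exhausting sequence $(S_n) \subset \mathcal{S}$ one has $(\gamma^{(j)})^{\pm}(S_n) \le \lambda_M(S_n) < \infty$, so each $f^{\pm}$ is finite $\lambda_M$-a.e.; after discarding a $\lambda_M$-null set I may set $\alpha_M^{(j)} := f^+ - f^-$, which is $\lambda_M$-integrable over every $A \in \mathcal{S}$ and satisfies the left identity in \eqref{eq:08031701}. The density $\beta_M$ is produced identically from the entries $Q^{i,j}$ of $Q$, each of which has total variation dominated by $\text{tr}(Q_\cdot) \le \lambda_M$ thanks to \eqref{eq:09121601} and the positive semi-definiteness of the $Q_A$ (which gives $|Q^{i,j}| \le \text{tr}(Q)$). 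Uniqueness $\lambda_M$-a.e. is the standard uniqueness in Radon--Nikod\'ym. I expect this step to be the main obstacle: because $\mathcal{S}$ is merely a $\delta$-ring there is no global finite measure against which to differentiate directly, so the extension to $\sigma(\mathcal{S})$ and the careful handling of the potentially infinite densities $f^{\pm}$ --- subtracting them only after finiteness has been secured $\lambda_M$-a.e.\ on each $S_n$ --- is where the real work lies.

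For (ii), fixing $x \in \R^m$, \cref{15031702} shows that $A \mapsto \skp{Q_{A \cap S_n} x}{x}$ is a finite, non-negative measure on $\sigma(\mathcal{S})$ that is absolutely continuous w.r.t.\ $\lambda_M$; by (i) its density is $\skp{\beta_M(\cdot) x}{x} \indikatorzwei{S_n}$, which is therefore $\ge 0$ $\lambda_M$-a.e. Letting $x$ range over the countable set $\Q^m$ produces a single null set off which $\skp{\beta_M(s)x}{x} \ge 0$ for all rational $x$, and continuity of the inner product extends the inequality to all $x \in \R^m$, giving positive semi-definiteness a.e. Symmetry is analogous: for each pair $(i,j)$ the signed measure $A \mapsto Q^{i,j}_{A \cap S_n} - Q^{j,i}_{A \cap S_n}$ is identically zero because every $Q_A$ is symmetric, so its density $\beta_M^{i,j} - \beta_M^{j,i}$ vanishes $\lambda_M$-a.e.

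For (iii), the first two summands of $K_M$ in \eqref{eq:18081701} integrate over $A$ to $\im \skp{\gamma_A}{t} - \frac{1}{2}\skp{Q_A t}{t}$ directly by \eqref{eq:08031701}. Writing $h(t,x)$ for the bracketed L\'evy integrand, the elementary estimate $|h(t,x)| \le C(t)\min\{1,\norm{x}^2\}$ combined with \eqref{eq:07031701} bounds the inner integral uniformly in $s$, hence it is $\lambda_M$-integrable over $A$ since $\lambda_M(A) < \infty$. Applying the Fubini-type identity \eqref{eq:06031702} to $\lambda_M \odot \rho_M$ together with $(\lambda_M \odot \rho_M)(A \times B) = \phi_A(B)$ from \cref{08031703} converts $\int_A \int_{\R^m} h(t,x)\,\rho_M(s,dx)\,\lambda_M(ds)$ into $\int_{\R^m} h(t,x)\,\phi_A(dx)$. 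Adding the three contributions reproduces exactly the L\'evy--Khintchine exponent of $M(A) \sim [\gamma_A, Q_A, \phi_A]$, which is its log-characteristic function, as claimed.
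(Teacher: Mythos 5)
Your proposal is correct and follows essentially the same route as the paper: componentwise positive/negative variation plus extension to $\sigma(\mathcal{S})$ and Radon--Nikod\'ym for (i), the rational-$x$ density-positivity argument with \cref{15031702} for (ii), and the Fubini-type identity \eqref{eq:06031702} combined with $(\lambda_M \odot \rho_M)(A \times B) = \phi_A(B)$ for (iii). Your explicit domination $|Q^{i,j}_A| \le \text{tr}(Q_A) \le \lambda_M(A)$ via positive semi-definiteness is just a concrete rendering of the paper's appeal to $|Q| \le \lambda_M$ on $\mathcal{S}$, so there is no substantive difference.
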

\begin{remark} \label{20081701}
In view of \eqref{eq:08031702} and the uniqueness of the L\'{e}vy-Khintchine-Formula we write $M \sim ( \lambda_M, K_M)$. And in the case of $\alpha_M=\beta_M=0$ we may even write $M \sim ( \lambda_M, \rho_M)$, respectively. Observe that the latter case applies to \cref{08031703} (ii) as long as \eqref{eq:07031701} holds with equality.
\end{remark}
\begin{example} \label{21031706}
\begin{itemize}
\item[(a)] Consider a $\sigma$-finite measure space $(S,\Sigma,\nu)$ and let $\mu \sim [\gamma',Q',\phi']$ be an i.d. distribution on $\R^m$ with log-characteristic function $\psi$ and not being the point measure at zero. Then $\mathcal{S}_{\nu}:=\{A \in \Sigma: \nu(A)< \infty \}$ is a $\delta$-ring with $\sigma(\mathcal{S}_{\nu})=\Sigma$ which can be verified easily with the aid of $(S_n)$. Hence, according to \cref{09121603}, there exists an i.d. ISRM $M$ with $M(A) \sim [\nu(A) \cdot \gamma',\nu(A) \cdot Q',\nu(A) \cdot \phi']$ for every $A \in \mathcal{S}_{\nu}$ and we say that $M$ is \textit{generated by $\nu$ and $\mu$}. Moreover, with
\begin{equation*}
C_{\mu}:= \norm{\gamma'}+\text{tr}(Q') +\integral{\R^m}{}{\min \{1,\norm{x}^2\} }{\phi'(dx)} \quad \in (0,\infty)
\end{equation*}
we get that $\lambda_M(\cdot)= C_{\mu} \cdot \nu(\cdot)$, whereas $\rho_M(\cdot)=C_{\mu}^{-1} \cdot \phi'(\cdot)$ and $K_M(\cdot)=C_{\mu}^{-1} \cdot \psi(\cdot)$ are both constant in $s \in S$. Therefore it is convenient to write $M \sim (\nu,\mu)$ and one can check by the construction in \cref{09121603}: $M(A_1)$ and $M(A_2)$ are independent if and only if $\nu(A_1 \cap A_2)=0$. Furthermore, independence of $M(A_1),...,M(A_n)$ is equivalent to pairwise independence.
\item[(b)] In \cite{falconer3} an $\R$-valued ISRM $M_{\alpha}$ is constructed such that the log-characteristic function of $M_{\alpha}(A)$ is given by 
\begin{equation} \label{eq:13031701}
\R \ni t \mapsto - \integral{A}{}{|t|^{\alpha(s)}}{ds}
\end{equation}
for every Borel set $A \subset \R$ with finite Lebesgue measure. Here $\alpha:\R \rightarrow [a,b]$ is a measurable function with $0<a \le b< 2$ and $M$ is called an \textit{$\alpha(s)$-multistable random measure}. On the one hand \cref{09121603} says that $M_{\alpha}$ is uniquely determined by \eqref{eq:13031701}, on the other hand $M$ can be recovered by our approach and \eqref{eq:08031702}: Denote by $\rho_{\alpha}(s, \cdot)$ for every $s \in \R$ the Borel measure with Lebesgue density $x \mapsto \theta(s) \, |x|^{-\alpha(s)-1}$, where $\theta(s):=\frac{\alpha(s)}{4}  (2-\alpha(s)) \in [c_1,c_2] $ for all $s \in \R$ and suitable $0<c_1 \le c_2 < \infty$  by the assumption on $\alpha(s)$, i.e. \eqref{eq:07031701} is fulfilled with equality. Similarly and as in \cite{SaTaq94} there exists a measurable function $\eta:\R \rightarrow [c_3,c_4] \subset (0,\infty)$ such that
\begin{equation*} \label{eq:09051702}
\eta(s) \integral{\R}{}{\left( \text{e}^{\im tx}-1-\frac{\im t x}{1+x^2} \right)|x|^{-\alpha(s)-1}}{dx}=- |t|^{\alpha(s)}
\end{equation*}
for every $s,t \in \R$. Finally let $\lambda_{\alpha}(\cdot)$ be the Borel measure with Lebesgue density $s \mapsto (\theta(s) \eta(s))^{-1}$ and apply \cref{08031703}, that means $M_{\alpha} \sim (\lambda_{\alpha},\rho_{\alpha})$ by \cref{20081701}.
\end{itemize}
\end{example}
\begin{remark} \label{19041701}
If we identify $\B(\C^m)$ and $\B(\R^{2m})$ by means of $\Xi$, we can observe that the relation between i.d. random measures with values in $\C^m$ and $\R^{2m}$, respectively, is one-to-one. Generally, for any $\C^m$-valued ISRM $M$, we say that $\Xi(M)$ is its \textit{real associated} ISRM. \\
Of course, we can (and will do) interpret every $\R^m$-valued i.d. ISRM $M$ as such a one with values in $\C^m$, having no imaginary parts which leads to $\Xi(t):=(t,0)$ for every $t \in \R^m$. Hence, in this case we understand $\Xi$ as a mapping with domain $\R^m$. Furthermore, we then see that $\Xi(M)(A)\sim [\tilde{\gamma}_A, \tilde{Q}_A, \tilde{\phi}_A]$ with 
\begin{center}
$\tilde{\gamma}_A=(\gamma_A,0)$, $\tilde{Q}_A=\begin{pmatrix} Q_A & 0 \\ 0 & 0 \end{pmatrix}$ and $ \tilde{\phi}_A=\Xi (\phi_A), \quad A \in \mathcal{S}.$
\end{center}
Similarly, this works for the objects in \cref{29031701} and one immediately checks that $\lambda_{\Xi(M)}=\lambda_M$, whereas the transition function becomes $\rho_{\Xi(M)}(s,A)=\rho_M(s,\Xi^{-1}(A))$ for any $A \in \B(\R^{2m})$ together with $K_M(s,t_1)=K_{\Xi(M)}(s,t)$ for all $s \in S$ and $t=(t_1,t_2) \in \R^{2m}$. 
\end{remark}
\section{Atomless random measures}
Throughout this chapter we denote by $M$ some fixed ISRM on a $\delta$-ring $\mathcal{S}$ with values in $\R^m$. Following \cite{Pre57} we call a set $A \in \mathcal{S}$ \textit{crucial} if
\begin{center}
$M(A \cap B)=0$ a.s. \quad or \quad  $M(A \cap B)=A$ a.s.
\end{center}
is true for every $B \in \mathcal{S}$. Then $M$ itself is called \textit{atomless}, if we have $M(A)=0$ a.s. or equivalently $\lambda_M(A)=0$ for every crucial set $A \in \mathcal{S}$. Conversely, any crucial set $A$ with $\lambda_M(A)>0$ is called an \textit{atom} of $M$. This definition appears even more natural in the light of the following statement, which is also similar to \cite{Pre57} and where the underlying probability space is still $(\Omega, \mathcal{A},\Pro)$.
\begin{prop} \label{25041703}
$M$ is atomless if and only if the following implication holds for every $A \in \mathcal{S}$ with $\Pro(M(A) \ne 0)>0$:
\begin{equation} \label{eq:25041701}
\exists A_1,A_2 \in \mathcal{S}, A_1 \cap A_2 = \emptyset:\quad  \Pro(M(A \cap A_i) \ne 0)>0 \quad (i=1,2).
\end{equation}
\end{prop}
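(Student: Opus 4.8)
The plan is to translate every probabilistic statement into one about the control measure $\lambda_M$ from \cref{20031705} and then argue purely measure-theoretically. The bridge is the equivalence already recorded in the definition of atomlessness: for $A \in \mathcal{S}$ one has $M(A)=0$ a.s. if and only if $\lambda_M(A)=0$, so negating it and using $\lambda_M \ge 0$ gives $\Pro(M(A) \ne 0)>0 \iff \lambda_M(A)>0$. Using finite additivity $M(A)=M(A \cap B)+M(A \setminus B)$ a.s. (a consequence of $(RM_2)$, with $A \cap B, A \setminus B \in \mathcal{S}$ by \cref{15031702} and the ring property), the condition $M(A \cap B)=M(A)$ a.s. is equivalent to $M(A \setminus B)=0$ a.s., i.e. to $\lambda_M(A \setminus B)=0$, while $M(A \cap B)=0$ a.s. is equivalent to $\lambda_M(A \cap B)=0$. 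Hence $A$ is crucial if and only if for every $B \in \mathcal{S}$ we have $\lambda_M(A \cap B)=0$ or $\lambda_M(A \setminus B)=0$; equivalently, $A$ admits no splitting into two disjoint members of $\mathcal{S}$ both of positive $\lambda_M$-measure. An atom is thus exactly a $\lambda_M$-indivisible set of positive measure.

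For the forward implication I assume $M$ atomless and fix $A$ with $\Pro(M(A) \ne 0)>0$, i.e. $\lambda_M(A)>0$. If $A$ were crucial it would be an atom, contradicting atomlessness; so $A$ is not crucial, and the reformulation yields $B \in \mathcal{S}$ with $\lambda_M(A \cap B)>0$ and $\lambda_M(A \setminus B)>0$. Setting $A_1:=A \cap B$ and $A_2:=A \setminus B$ produces disjoint sets in $\mathcal{S}$ with $A \cap A_i=A_i$, whence $\Pro(M(A \cap A_i) \ne 0)>0$ for $i=1,2$, which is precisely \eqref{eq:25041701}.

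For the converse I assume \eqref{eq:25041701} and suppose, for contradiction, that some crucial $A$ has $\lambda_M(A)>0$. Then $\Pro(M(A) \ne 0)>0$, so \eqref{eq:25041701} furnishes disjoint $A_1,A_2 \in \mathcal{S}$ with $\lambda_M(A \cap A_i)>0$. Choosing $B:=A_1$ and using $A \cap A_2 \subseteq A \setminus A_1=A \setminus B$ together with monotonicity of $\lambda_M$ gives $\lambda_M(A \cap B)>0$ and $\lambda_M(A \setminus B) \ge \lambda_M(A \cap A_2)>0$, contradicting cruciality of $A$. Hence no crucial set carries positive $\lambda_M$-measure and $M$ is atomless.

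I expect the only real work to lie in the reformulation step of the first paragraph: one must check that the two defining alternatives of ``crucial'' correspond exactly to the vanishing of $\lambda_M$ on $A \cap B$ and on $A \setminus B$, and that every set one manipulates stays inside $\mathcal{S}$ (this is where \cref{15031702} and the $\delta$-ring structure enter). Once this dictionary is in place both directions are immediate, since each side of the equivalence merely says, in different language, that $M$ possesses no atom.
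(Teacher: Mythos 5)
Your proof rests on a false ``bridge'': the claim that $\Pro(M(A)\ne 0)>0$ if and only if $\lambda_M(A)>0$ for \emph{every} $A\in\mathcal{S}$. Only one direction is true: $\lambda_M(A)=0$ implies $M(A)=0$ a.s.\ by \cref{20031705}~(i). The converse fails, because the shift contribution to $\lambda_M$ in \eqref{eq:09121601} is the \emph{total variation} $|\gamma|_A$, which registers cancellation that $M(A)$ itself cannot see. Concretely, take $S=\{1,2\}$, $\mathcal{S}=\mathcal{P}(S)$ and the deterministic (hence i.d.\ and independently scattered) measure $M(\{1\})\equiv e_1$, $M(\{2\})\equiv -e_1$: then $M(S)=0$ a.s.\ while $\lambda_M(S)=|\gamma|_S=2>0$. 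The equivalence invoked in the paper's definition of atomlessness is asserted for \emph{crucial} sets only, and even there it is not a tautology: it requires \cref{20031705}~(ii) together with cruciality (if $A$ is crucial and $M(A)=0$ a.s., then $M(A\cap B)=0$ a.s.\ for all $B\in\mathcal{S}$, and only then does \cref{20031705}~(ii) yield $\lambda_M(A)=0$). Consequently your ``dictionary'' -- $A$ is crucial iff $A$ admits no splitting into two disjoint members of $\mathcal{S}$ of positive $\lambda_M$-measure -- is not established by your argument; the statement itself happens to be true, but proving the direction ``crucial $\Rightarrow$ $\lambda_M$-indivisible'' is essentially as hard as the proposition and again needs \cref{20031705}~(ii).

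Where this hurts: your forward implication survives, because the detour through $\lambda_M$ is unnecessary -- if $A$ is not crucial, you directly obtain $B\in\mathcal{S}$ with $\Pro(M(A\cap B)\ne 0)>0$ and, by additivity, $\Pro(M(A\setminus B)\ne 0)>0$, which is \eqref{eq:25041701} with $A_1=A\cap B$ and $A_2=A\setminus B$. But the converse genuinely breaks: from \eqref{eq:25041701} you deduce $\lambda_M(A\cap B)>0$ and $\lambda_M(A\setminus B)>0$ (that step is fine) and then declare this to ``contradict cruciality of $A$'' -- yet cruciality is a statement about $M$, not about $\lambda_M$, and without the unproven direction of your dictionary there is no contradiction at all. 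The paper instead stays probabilistic: cruciality together with $\Pro(M(A\cap A_i)\ne 0)>0$ forces $M(A\cap A_i)=M(A)$ a.s.\ for $i=1,2$; since $M(A\setminus(A_1\cup A_2))$ is $0$ or $M(A)$ a.s., additivity gives $M(A)=k\cdot M(A)$ a.s.\ for some $k\in\{2,3\}$, hence $M(A)=0$ a.s., contradicting that $A$ is an atom. Some argument of this kind (or an honest proof of your dictionary) is missing from your proposal.
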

\begin{proof}
Assume that there is an $A \in \mathcal{S}$ with $\Pro(M(A) \ne 0)>0$ such that \eqref{eq:25041701} is false. Then, because of $M(A)=M(A\cap B)+ M(A \cap (A \setminus B))$ a.s. for every $B \in \mathcal{S}$, this implies that $A$ is crucial, which contradicts the assumption as long as $M$ is atomless. \\
Conversely, if we assume that $M$ has an atom $A$, \eqref{eq:25041701} provides sets $A_1,A_2$ as mentioned above which necessarily leads to $M(A \cap A_1)=M(A)=M(A \cap A_2)$ a.s. Use again that $A$ is an atom together with $A \setminus C=A \cap (A \setminus C)$ for every $C \in \mathcal{S}$ to check that we have $M(A \setminus (A_1 \cup A_2))=0$ a.s. or $M(A \setminus (A_1 \cup A_2))=M(A)$ a.s., respectively. Finally, we can combine both findings and obtain with $(RM_2)$ that there is a $k \in \{2,3\}$ such that $M(A)=k \cdot M(A)$ a.s. which easily provides the contradiction.
\end{proof}
\begin{remark} \label{14031702}
Obviously we can also use the previous definition and proposition for deterministic measures which means that the corresponding \textit{probabilities} are $\{0,1\}$-valued.
\end{remark}
Now we want to formulate the central result of this section which will be false if we relax the definition of atomless random measures (as in \cite{Woy70} for example), since we are considering general $\delta$-rings. Observe likewise that the converse of the following theorem cannot hold either.
\begin{theorem} \label{14031704}
If $M$ is atomless, then it is i.d.
\end{theorem}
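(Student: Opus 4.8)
The plan is to reduce the probabilistic atomlessness of $M$ to honest non-atomicity of the control measure $\lambda_M$, and then to exhibit $M(A)$ for each fixed $A$ as the row sum of an infinitesimal triangular array, so that the classical characterization of infinitely-divisible laws applies.

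First I would record the elementary fact that $\lambda_M(C)=0$ is equivalent to $M(C)=0$ a.s.: by \eqref{eq:09121601} the vanishing of $\lambda_M(C)$ forces $|\gamma|_C=\mathrm{tr}(Q_C)=0$ and $\phi_C=0$, hence $\gamma_C=0$, $Q_C=0$, and $M(C)\sim[0,0,0]=\eps_0$. Using this I would show that atomlessness of $M$ makes $\lambda_M$ non-atomic on $\sigma(\mathcal{S})$. Indeed, suppose $\lambda_M$ had an atom. Since $\lambda_M$ is $\sigma$-finite no atom can have infinite measure, and after intersecting with a suitable $S_n$ (via \cref{15031702}) I may take it to be a set $A\in\mathcal{S}$ with $0<\lambda_M(A)<\infty$ such that every measurable $B\subset A$ has $\lambda_M(B)\in\{0,\lambda_M(A)\}$. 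For an arbitrary $B\in\mathcal{S}$ the subset $A\cap B\subset A$ then satisfies either $\lambda_M(A\cap B)=0$, so that $M(A\cap B)=0$ a.s., or $\lambda_M(A\setminus B)=0$, so that $M(A\cap B)=M(A)$ a.s. by additivity; thus $A$ would be crucial with $\lambda_M(A)>0$, i.e. an atom of $M$, contradicting atomlessness.

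Next I would fix $A\in\mathcal{S}$, so that $\lambda_M(A)<\infty$. Because $\lambda_M$ is non-atomic, Sierpi\'nski's theorem on the range of a non-atomic measure lets me peel off, for each $n$, a disjoint partition $A=A_{n,1}\cup\cdots\cup A_{n,n}$ with $\lambda_M(A_{n,j})=\lambda_M(A)/n$; each $A_{n,j}\subset A$ again lies in $\mathcal{S}$ by \cref{15031702}. By finite additivity and $(RM_1)$ we have $M(A)=\sum_{j=1}^n M(A_{n,j})$ with independent summands for every $n$. I would then verify that this array is infinitesimal, i.e. $\sup_{1\le j\le n}\Pro(\norm{M(A_{n,j})}>\eps)\to 0$ for each $\eps>0$: if this failed there would be $\eps,\delta>0$ and indices $j_k$ with $\Pro(\norm{M(A_{n_k,j_k})}>\eps)\ge\delta$ along some $n_k\to\infty$, while $\lambda_M(A_{n_k,j_k})=\lambda_M(A)/n_k\to 0$, contradicting \cref{20031705}(i) applied to the sequence $(A_{n_k,j_k})$.

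Finally, since $M(A)=\sum_{j=1}^n M(A_{n,j})$ for every $n$, the row sums are constant in distribution and hence converge to $\mathcal{L}(M(A))$, while the array is row-wise independent and infinitesimal. The classical description of infinitely-divisible laws on $\R^m$ as precisely the limit laws of infinitesimal triangular arrays (see \cite{thebook}) then gives that $M(A)$ is infinitely divisible, and since $A\in\mathcal{S}$ was arbitrary, $M$ is i.d. The main obstacle is the first step — converting the purely probabilistic atomlessness into genuine non-atomicity of $\lambda_M$ — together with upgrading the sequential continuity in \cref{20031705}(i) to the uniform infinitesimality needed above; once $\lambda_M$ is known to be non-atomic, the equipartition and the triangular-array argument are routine.
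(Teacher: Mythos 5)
Your proposal is circular, and the circularity is fatal. The control measure $\lambda_M$, its defining formula \eqref{eq:09121601}, and both parts of \cref{20031705} are only available for \emph{infinitely-divisible} ISRMs: $\lambda_M$ is built from the L\'{e}vy--Khintchine triplets $[\gamma_A,Q_A,\phi_A]$ of the laws $\mathcal{L}(M(A))$, which exist precisely when each $M(A)$ is i.d. In \cref{14031704} the hypothesis is only that $M$ is an atomless ISRM; infinite divisibility of the $M(A)$ is the conclusion to be established. So your very first step (``by \eqref{eq:09121601} the vanishing of $\lambda_M(C)$ forces $|\gamma|_C=\mathrm{tr}(Q_C)=0$ and $\phi_C=0$''), the reduction of atomlessness of $M$ to non-atomicity of $\lambda_M$, and the infinitesimality argument via \cref{20031705}(i) all presuppose exactly what you are trying to prove. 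There is no way to patch this while keeping $\lambda_M$ as the underlying measure, because no control measure has been (or can be) defined at this stage.

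The paper's proof shows how to get around this: in place of $\lambda_M$ it works with set functions that exist for \emph{arbitrary} random vectors, namely $g_T(A)=\sup\{|1-f(t,A)|:\norm{t}_{\infty}\le T\}$, where $f(\cdot,A)$ is the characteristic function of $M(A)$. Using Pr\'{e}kopa's results and \cref{24041701}, the total variations $|g_T|$ (localized to the sets $S_n$ via \cref{15031702}) are shown to be finite measures on $\sigma(\mathcal{S})$; \cref{26041701} transfers atomlessness of $M$ to atomlessness of these deterministic measures; and the Dunford--Schwartz atomic decomposition (IV 9, Lemma 7) then produces, for each fixed $A\in\mathcal{S}$ and each $l$, a finite partition of $A$ into sets on which $|g_{1/l}|$ is small, plus one remainder set handled by $(RM_2)$. \cref{25041705} converts smallness of $|g_{1/l}|$ into smallness of the probabilities $\Pro(\norm{M(C_j^{(l)})}\ge 1/l)$, giving the infinitesimal triangular array. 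Your final step (row-wise independent, infinitesimal array with constant row sums, then the limit theorem from \cite{thebook}) coincides with the paper's fourth step and is fine; likewise, your Sierpi\'{n}ski equipartition idea would be a clean route \emph{if} a non-atomic finite measure controlling $M$ were available without assuming infinite divisibility --- but constructing such a measure from the characteristic functions alone is precisely the substance of the paper's first three steps, and it is the part your proposal skips.
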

The proof requires some preparation. Therefore let $X$ be an arbitrary $\R^m$-valued random vector and denote its characteristic function by $\varphi$. 
\begin{lemma} \label{25041705}
For every $\delta>0$ there exists a $C'(\delta)>0$ such that
\begin{equation*}
\Pro(\norm{X} \ge \delta) \le C'(\delta) \integral{[-\delta,\delta]^m}{}{(1-\varphi(t))}{dt}.
\end{equation*}
\end{lemma}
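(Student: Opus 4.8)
The plan is to turn the right-hand side into an expectation and to bound the resulting integrand from below on the event $\{\norm{X}\ge\delta\}$. Since $1-\varphi(t)=\bigerwartung{1-\text{e}^{\im \skp{t}{X}}}$ and the integrand is bounded by $2$ on the bounded set $[-\delta,\delta]^m$, Fubini's theorem yields
\begin{equation*}
\integral{[-\delta,\delta]^m}{}{(1-\varphi(t))}{dt}=\bigerwartung{g(X)},\qquad g(x):=\integral{[-\delta,\delta]^m}{}{\left(1-\text{e}^{\im \skp{t}{x}}\right)}{dt}.
\end{equation*}
The inner integral factorizes over the coordinates, and as the odd (sine) parts cancel by symmetry of the box, one obtains $g(x)=(2\delta)^m-\produkt{j=1}{m}\frac{2\sin(\delta x_j)}{x_j}$, where the $j$-th factor is read as $2\delta$ when $x_j=0$. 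In particular $g$ is real-valued, so the right-hand side of the assertion is a real number.

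Next I would record two properties of $g$. Writing $\frac{2\sin(\delta x_j)}{x_j}=2\delta\cdot\frac{\sin(\delta x_j)}{\delta x_j}$ and using $|\sin u|\le|u|$, each factor has absolute value at most $2\delta$, so the product is at most its modulus $\le(2\delta)^m$ and hence $g\ge 0$ everywhere. For the decisive lower bound on $\{\norm{x}\ge\delta\}$, put $\theta:=\sup\{|\sin u/u| : |u|\ge \delta^2/\sqrt m\}$. Because $u\mapsto|\sin u/u|$ is continuous, strictly smaller than $1$ for $u\ne 0$, and bounded by $1/|u|\to 0$, its supremum over the closed set $\{|u|\ge \delta^2/\sqrt m\}$ is attained away from the origin, so $\theta<1$. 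Now if $\norm{x}\ge\delta$, the largest coordinate satisfies $|x_{j_0}|\ge\norm{x}/\sqrt m\ge\delta/\sqrt m$, hence $|\delta x_{j_0}|\ge\delta^2/\sqrt m$; bounding the remaining factors by $1$ gives $\produkt{j=1}{m}\left|\frac{2\sin(\delta x_j)}{x_j}\right|\le(2\delta)^m\theta$, and therefore $g(x)\ge(2\delta)^m(1-\theta)>0$.

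Finally, since $g\ge 0$ I may discard the contribution of $\{\norm{X}<\delta\}$ and conclude
\begin{equation*}
\integral{[-\delta,\delta]^m}{}{(1-\varphi(t))}{dt}=\bigerwartung{g(X)}\ge\bigerwartung{g(X)\,\indikatorzwei{\{\norm{X}\ge\delta\}}}\ge(2\delta)^m(1-\theta)\,\Pro(\norm{X}\ge\delta),
\end{equation*}
which proves the claim with $C'(\delta)=[(2\delta)^m(1-\theta)]^{-1}$. I expect the main obstacle to be the uniform lower bound in the middle step: as $\{\norm{x}\ge\delta\}$ is unbounded one cannot merely appeal to compactness, and the idea is to reduce the multivariate product to a single coordinate and exploit that $\sup_{|u|\ge r}|\sin u/u|<1$ for every $r>0$.
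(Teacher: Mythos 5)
Your proof is correct and follows essentially the same route as the paper: the paper's proof defines $g(y)=\sin(y)/y$, asserts via ``simple calculations'' the uniform bound $1-\prod_{j=1}^m g(\delta x_j)\ge C(\delta,\delta)$ on $\{\norm{x}\ge\delta\}$, and invokes a multivariate extension of Pr\'ekopa's truncation inequality (1.2) --- which is exactly the Fubini identity you derive --- arriving at the same constant $C'(\delta)=((2\delta)^m C(\delta,\delta))^{-1}$ with $C(\delta,\delta)=1-\theta$. You have simply filled in the two steps the paper delegates to the citation and to ``simple calculations,'' namely the Fubini computation of $\int_{[-\delta,\delta]^m}(1-\varphi(t))\,dt$ and the single-coordinate reduction giving $\theta<1$.
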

\begin{proof}
Define $g(y):=\sin(y)/y$ for $y \ne 0$ and $g(0):=1$. Then simple calculations show that for given $\delta,\gamma>0$ there exists a $C(\delta, \gamma) \in (0,1)$ such that $1- \prod_{j=1}^m g(\delta x_j) \ge C(\delta,\gamma)$ for every $x=(x_1,...,x_m)$ with $\norm{x} \ge \gamma$. After this a multivariate extension of (1.2) in \cite{Pre56} yields the assertion, where we can choose $C'(\delta)=((2 \delta)^m C(\delta,\delta) )^{-1}$.
\end{proof}
\begin{lemma} \label{25041702}
$\Pro(X \ne 0)>0$ implies
\begin{equation*} 
h(X,T):= \sup \{|1-\varphi(t)|: \norm{t}_{\infty} \le T\}>0
\end{equation*}
for all $T>0$. Conversely, if $h(X,T)>0$ for some $T>0$, then we have $\Pro(X \ne 0)>0$.
\end{lemma}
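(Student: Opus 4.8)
The statement is a biconditional whose converse is immediate and whose forward implication carries all the content; the plan is to dispose of the converse by a one-line computation and then attack the forward direction by contraposition, feeding it into \cref{25041705}. For the converse, suppose toward contradiction that $\Pro(X \ne 0)=0$, i.e. $X=0$ almost surely. Then $\varphi(t)=\erwartung{e^{\im \skp{t}{X}}}=1$ for every $t \in \R^m$, so $|1-\varphi(t)|$ vanishes identically and $h(X,T)=0$ for all $T>0$. This contradicts the hypothesis $h(X,T)>0$ for some $T$, whence $\Pro(X \ne 0)>0$.

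For the forward direction I again argue by contraposition: since $h(X,T)\ge |1-\varphi(0)|=0$ always, the negation of ``$h(X,T)>0$ for all $T>0$'' is ``$h(X,T_0)=0$ for some $T_0>0$'', and I must deduce $\Pro(X \ne 0)=0$ from it. The vanishing of the supremum $h(X,T_0)$ forces $\varphi(t)=1$ for every $t$ with $\norm{t}_\infty \le T_0$, that is, on the whole cube $[-T_0,T_0]^m$. Now I invoke \cref{25041705}: for any $\delta \in (0,T_0]$ the cube $[-\delta,\delta]^m$ is contained in $[-T_0,T_0]^m$, so the integrand $1-\varphi(t)$ vanishes there and the bound $\Pro(\norm{X}\ge \delta)\le C'(\delta)\int_{[-\delta,\delta]^m}(1-\varphi(t))\,dt$ gives $\Pro(\norm{X}\ge \delta)=0$ for every such $\delta$. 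Writing $\{X \ne 0\}=\bigcup_{n}\{\norm{X}\ge 1/n\}$ as an increasing union and noting that $1/n \le T_0$ for large $n$, continuity from below yields $\Pro(X \ne 0)=\lim_{n}\Pro(\norm{X}\ge 1/n)=0$, as required.

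The only point that needs a word of care is the interpretation of the integral $\int_{[-\delta,\delta]^m}(1-\varphi(t))\,dt$ in \cref{25041705}, since $\varphi$ is complex-valued while the left-hand side $\Pro(\norm{X}\ge \delta)$ is real. This is harmless: from $\varphi(-t)=\overline{\varphi(t)}$ and the symmetry of the cube the imaginary part integrates to zero, so the integral equals $\int_{[-\delta,\delta]^m}(1-\real{\varphi(t)})\,dt \ge 0$, and in the present situation it is exactly zero because $\varphi \equiv 1$ on the cube. Consequently there is no genuine obstacle here; the substantive estimate has already been done in \cref{25041705}, and this lemma is essentially a repackaging of that estimate together with the trivial evaluation $\varphi \equiv 1$ when $X=0$ almost surely. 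The same lemma will be the engine for the atomlessness argument of \cref{14031704}.
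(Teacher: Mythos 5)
Your proof is correct, but it takes a genuinely different route from the paper's. The paper does not use \cref{25041705} at all: it argues by contradiction from $h(X,T)=0$ via the doubling inequality $0 \le 1-|\varphi(2t)|^2 \le 4(1-|\varphi(t)|^2)$ (Proposition 1.3.4 in \cite{thebook}), so that by induction the modulus of $\varphi$ equals one on every cube $[-2^nT,2^nT]^m$, hence on all of $\R^m$, which contradicts $\Pro(X \ne 0)>0$. You replace this doubling/induction step by the truncation inequality of \cref{25041705}: once $\varphi\equiv 1$ on $[-T_0,T_0]^m$, that inequality annihilates $\Pro(\norm{X}\ge\delta)$ for every $\delta\in(0,T_0]$, and continuity from below finishes. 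Your route buys two things: it is entirely self-contained within the paper (it reuses the lemma proved immediately before, so the two lemmas are genuinely linked rather than parallel), and it sidesteps a small imprecision in the paper's own argument --- the doubling inequality controls only $|\varphi|$, so the paper's claim that $h(X,2T)=0$ follows at once is slightly too quick; one must still pass from $|\varphi|\equiv 1$ on $\R^m$ to degeneracy of $X$ and then use $\varphi=1$ near the origin to pin the constant to zero. The paper's route, in turn, keeps \cref{25041702} logically independent of \cref{25041705}, resting only on a cited standard inequality for characteristic functions. Your side remark on the reality of the integral in \cref{25041705} (via $\varphi(-t)=\overline{\varphi(t)}$ and the symmetry of the cube) is correct and worth keeping.
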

\begin{proof}
Assume that there exists a $T>0$ with $h(X,T)=0$. Then, with the use of
\begin{equation*}
0 \le 1- |\varphi(2t)|^2 \le 4 (1-|\varphi(t)|^2), \quad t \in \R^m
\end{equation*}
(see Proposition 1.3.4 in \cite{thebook}), we obtain $h(X,2T)=0$ which contradicts $\Pro(X \ne 0)>0$ by induction. The converse is obvious.
\end{proof}
We return to $M$ and denote the characteristic function of $M(A)$ by $f(\cdot,A)$. Then we define $g_T:\mathcal{S} \rightarrow [0,2]$ via
\begin{equation*}
g_T(A):= \sup \{|1-f(t,A)| : \norm{t}_{\infty} \le T\}=h(M(A),T), \quad A \in \mathcal{S}
\end{equation*}
for every $T>0$. Unfortunately, $g_T$ will not be $\sigma$-additive in general, therefore we have to consider its total variation $|g_T|$. However, the fact that $|g_T|$ can be infinite prohibits a direct application of Lemma 1 in \cite{Pre56-2}. Also note that the following statement is in part similar to Theorem 2.1 in \cite{Pre57}.
\begin{prop} \label{26041701}
$|g_T|$ is $\sigma$-additive for every $T>0$ (with values in $[0,\infty]$). Furthermore:
\begin{itemize}
\item[(a)] If $M$ is atomless, then $|g_T|$ is atomless for every $T>0$ (in the sense of \cref{14031702}).
\item[(b)] If $|g_T|$ is atomless for some $T>0$, then the same holds for $M$ itself.
\end{itemize}
\begin{proof}
Inductively we see that $|1-\prod_{j=1}^n z_j| \le \sum_{j=1}^{n} |1-z_j|$ for any $n \ge 1$ and complex numbers with $|z_j| \le 1$. Hence  $|1-f(t,A)| \le \sum_{j=1}^{\infty} |1-f(t,A_j)|$ by L\'{e}vy's continuity theorem for every $t \in \R^m$ and any disjoint sequence $(A_j) \subset \mathcal{S}$ with $A:=\cup_{j=1}^{\infty} A_j \in \mathcal{S}$. Therefore $g_T$ is $\sigma$-subadditive and \cref{24041701} gives the first part of the assertion. For (a) observe that $|g_T|(A)>0$ always implies the existence of a set $ A' \subset A$ in $\mathcal{S}$ with $g_T(A')>0$ such that $\Pro(M(A')\ne 0)>0$ due to the previous Lemma. Now we can first use \eqref{eq:25041701} (which yields appropriate sets $A_1,A_2$) and then \cref{25041702} again to see that $g_T(A' \cap A_i)>0$. Especially $|g_T|(A \cap (A' \cap A_i))>0$ for $i=1,2$ which gives the statement by \cref{25041703} and \cref{14031702}. The proof of (b) is similar and therefore left to the reader.
\end{proof}
\end{prop}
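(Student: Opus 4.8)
The plan is to treat the three assertions in order, reducing each to an elementary product inequality together with \cref{25041702}.

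First I would establish that $g_T$ is $\sigma$-subadditive. Fix a disjoint sequence $(A_j) \subset \mathcal{S}$ with $A := \bigcup_{j=1}^{\infty} A_j \in \mathcal{S}$. By $(RM_2)$ the partial sums $\sum_{j=1}^{n} M(A_j)$ converge a.s.\ to $M(A)$, so that by $(RM_1)$ and the (easy) direction of L\'{e}vy's continuity theorem one has $f(t,A) = \prod_{j=1}^{\infty} f(t,A_j)$ for every $t \in \R^m$. A straightforward induction gives $\big|1 - \prod_{j=1}^{n} z_j\big| \le \sum_{j=1}^{n}|1 - z_j|$ whenever $|z_j| \le 1$; letting $n \to \infty$ yields
\begin{equation*}
|1 - f(t,A)| \le \sum_{j=1}^{\infty} |1 - f(t,A_j)| \le \sum_{j=1}^{\infty} g_T(A_j) \quad \text{for all } \norm{t}_{\infty} \le T .
\end{equation*}
Since the right-hand side is independent of $t$, taking the supremum over $\norm{t}_{\infty} \le T$ on the left produces $g_T(A) \le \sum_{j=1}^{\infty} g_T(A_j)$. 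As $g_T$ is $[0,2]$-valued, hence $[0,\infty)$-valued, I would then invoke \cref{24041701} --- the version of \cref{09121602} for nonnegative $\sigma$-subadditive set functions --- to conclude that $|g_T|$ is $\sigma$-additive with values in $[0,\infty]$. I expect this to be the main obstacle: because $g_T$ is merely subadditive (not additive), $|g_T|$ need not be finite, which is exactly why the theorem itself does not apply and why the statement must allow the value $+\infty$.

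For (a) I would assume $M$ atomless and $|g_T|(A) > 0$. By the definition of the total variation there is some $A' \in \mathcal{S}$ with $A' \subset A$ and $g_T(A') > 0$; by \cref{25041702} this forces $\Pro(M(A') \ne 0) > 0$. Atomlessness of $M$ via \cref{25041703} and \eqref{eq:25041701} then supplies disjoint sets $A_1, A_2 \in \mathcal{S}$ with $\Pro(M(A' \cap A_i) \ne 0) > 0$, and applying the forward direction of \cref{25041702} once more gives $g_T(A' \cap A_i) > 0$, whence $|g_T|(A' \cap A_i) \ge g_T(A' \cap A_i) > 0$. Since $A' \subset A$, the sets $A' \cap A_1$ and $A' \cap A_2$ are disjoint subsets of $A$ satisfying $|g_T|\big(A \cap (A' \cap A_i)\big) > 0$, which is precisely the atomlessness criterion for the deterministic measure $|g_T|$ in the sense of \cref{14031702}.

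For (b) I would fix a $T > 0$ for which $|g_T|$ is atomless and take $A \in \mathcal{S}$ with $\Pro(M(A) \ne 0) > 0$. Then \cref{25041702} gives $g_T(A) > 0$, so $|g_T|(A) \ge g_T(A) > 0$, and atomlessness of $|g_T|$ produces disjoint $A_1, A_2 \in \mathcal{S}$ with $|g_T|(A \cap A_i) > 0$. For each $i$ the total-variation definition yields a set $B_i \in \mathcal{S}$ with $B_i \subset A \cap A_i$ and $g_T(B_i) > 0$, and \cref{25041702} then gives $\Pro(M(B_i) \ne 0) > 0$. As $B_1 \subset A_1$ and $B_2 \subset A_2$ are disjoint subsets of $A$ with $A \cap B_i = B_i$, they verify \eqref{eq:25041701} for $A$, so $M$ is atomless by \cref{25041703}. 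In both (a) and (b) the argument is a symmetric dictionary between the statements $\Pro(\,\cdot \ne 0\,) > 0$ and $g_T(\cdot) > 0$, translated back and forth by \cref{25041702}, once the $\sigma$-additivity of $|g_T|$ from the first step is in place.
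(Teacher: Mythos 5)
Your proposal is correct and follows essentially the same route as the paper: the product inequality $|1-\prod_j z_j|\le\sum_j|1-z_j|$ plus L\'{e}vy's continuity theorem to get $\sigma$-subadditivity of $g_T$, \cref{24041701} for the $\sigma$-additivity of $|g_T|$ (correctly noting that finiteness may fail), and the back-and-forth translation through \cref{25041702}, \cref{25041703} and \cref{14031702} for part (a). Your part (b), which the paper leaves to the reader, is exactly the intended symmetric argument, carried out correctly via the sets $B_i\subset A\cap A_i$ with $g_T(B_i)>0$.
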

\begin{proof}[Proof of \cref{14031704}] In the following assume that $S_n \in \mathcal{S}$ are disjoint with $\cup_{n=1}^{\infty} S_n=S$. \\
\underline{First step:} For $X$ as above a straight-forward extension of Theorem 3.1 in \cite{Pre56}, using the Cauchy-Schwarz inequality and Lemma 8.6 in \cite{Sato} for example, yields to
\begin{align*}
|1-\omega(t)| & \le \frac{\norm{t}^2}{2} Var (h(X)) + \frac{\norm{t}^2}{2} \erwartung{h(X)}^2 + \norm{t} \erwartung{h(X)}+ 2 \Pro(\norm{X}>1) \\
& \le \frac{\norm{t}^2}{2} Var (h(X)) + \left (\frac{\norm{t}^2}{2} + \norm{t} \right) \erwartung{h(X)} + 2 \Pro(\norm{X}>1),
\end{align*}
for every $t \in \R^m$, where $h(y):=\norm{y}Ê\cdot \indikatorzwei{\norm{y} \le 1}$. Due to Theorem 15.50 in \cite{Kle08} this shows that if $(X_n)$ is a sequence of independent $\R^m$-valued random vectors with $\sum_{n=1}^{\infty} \norm{X_n}< \infty$ a.s., then we have convergence of each of the following series:
\begin{equation*}
\summe{n=1}{\infty} \Pro(\norm{X_n}>1) , \quad \summe{n=1}{\infty} \erwartung{h(X_n)} , \quad  \summe{n=1}{\infty} Var(h(X_n)) .
\end{equation*}
Denoting the characteristic function of $X_n$ by $\varphi_n(\cdot)$, we can combine both findings to see that the series $\sum_{n=1}^{\infty} \sup \{|1-\varphi_n(t)| : \norm{t}_{\infty} \le T\}$ converges for every $T>0$ in this case. \\
\underline{Second step:} \cref{15031702} allows to define the $[0,2]$-valued mapping $g_T^{(n)}(A):=g_T(A \cap S_n)$ on $\sigma(\mathcal{S})$ which inherits the $\sigma$-subadditivity. Fix $n \in \N$ and $T>0$ as well as some disjoint sequence $(A_k) \subset \sigma(\mathcal{S})$. Then the union over $(A_k \cap S_n)_k$ belongs to $\mathcal{S}$ (see \eqref{eq:21161101}) such that the series $\sum_{k=1}^{\infty} M(A_k \cap S_n)$ converges a.s., namely absolutely due to $(RM_2)$. Hence, together with $(RM_1)$ the first step can be applied to obtain
\begin{equation*}
\summe{k=1}{\infty} \sup \{|1-f(t,A_k \cap S_n|) : \norm{t}_{\infty} \le T \} =\summe{k=1}{\infty}g_T^{(n)} (A_k) < \infty.
\end{equation*}
Then Theorems 1.1 and 1.2 in \cite{Pre56} imply that $|g_T^{(n)}|$ is a finite measure on $\sigma(\mathcal{S})$. Indeed, this leads besides \cref{26041701} to the fact that $|g_T|^{(n)}(A):=|g_T|(A \cap S_n)$ also defines a finite measure on $\sigma(\mathcal{S})$ as we have $|g_T|^{(n)} \le |g_T^{(n)}|$. Consider $A \in \sigma(\mathcal{S})$ arbitrary, then the latter claim is clear by definition of the total variation since every $B \subset (A \cap S_n)$ fulfills $B=B \cap S_n$  as well as $B \subset A$. \\
\underline{Third step:} Fix some $A \in \mathcal{S}$ arbitrary. Using the idea of Theorem 2.2 in \cite{Pre57} we can construct a sequence of families $\{C_0^{(l)},C_1^{(l)},...,C_{k(l)}^{(l)}\}$ such that the following holds for every $l \in \N$:
\begin{itemize}
\item[(i)] $C_0^{(l)},...,C_{k(l)}^{(l)}$ belong to $\mathcal{S}$ and are disjoint.
\item[(ii)] $C_0^{(l)} \cup  \cdots \cup C_{k(l)}^{(l)}=A$.
\item[(iii)] $\Pro(\norm{M(C_0^{(l)})} \ge 1/l) \le 1/l.$
\item[(iv)] $|g_{1/l}|(C_j^{(l)}) \le \eps_l$ for $j=1,...,k(l)$ with $\eps_l:= l^{m-1} 2^{-m} C'(1/l)^{-1}>0$, where $C'(1/l)$ is as in \cref{25041705}.
\end{itemize}
Fix $l \in \N$ arbitrary and check with $(RM_2)$ that $M(\cup_{n=\nu+1}^{\infty}(A \cap S_n)) \rightarrow 0$ a.s. In particular we can find $\nu(l)$ sufficiently large such that (iii) is fulfilled for $C_0^{(l)}:= \cup_{n=\nu(l)+1}^{\infty} (A \cap S_n)$. Next we consider $A \cap S_1$ and the measure $|g_{1/l}|^{(1)}$ which is finite according to the previous step. Hence IV 9, Lemma 7 in \cite{DunSch57} provides finitely many disjoint sets $D_1^{(l)},...,D_{(k,1)}^{(l)} \in \sigma(\mathcal{S})$ whose union equals $S$ and where $D_j^{(l)}$ is either an \textit{atom} or fullfils $|g_{1/l}|^{(1)} (D_j^{(l)}) \le \eps_l$ for $j=1,...,k(1,l)$. One can check easily that the definition for an atom in \cite{DunSch57} leads to the latter conclusion as we assume $M$ to be atomless. Similarly we obtain disjoint sets $D_{k(1,l)+1}^{(l)},....,D_{k(2,l)}^{(l)} \in \sigma(\mathcal{S})$ that exhaust $S$ with $|g_{1/l}|^{(2)}(D_j^{(l)}) \le \eps_l$ for $j=k(1,l)+1,...,k(2,l)$. Continue this procedure until the consideration of $|g_{1/l}|^{(\nu(l))}$, leading to $D_1^{(l)},...,D_{k(l)}^{(l)}$ with $k(l)=k(\nu(l),l)$. This obviously completes the construction via 
\begin{equation*}
C_j^{(l)}:=D_j^{(l)} \cap A \cap S_n, \quad \text{if $k(n-1,l)<j \le k(n,l)$}
\end{equation*}
for $j=1,...,k(l)$ with $k(0,l):=0$. \\
\underline{Fourth step:} We have seen that $M(A)= \sum_{j=0}^{k(l)} M(C_j^{(l)})$ holds a.s. for any $l \in \N$. Thus due to (i) this defines a triangular array $\Gamma:=\{M(C_j^{(l)}): 0 \le j \le k(l), l \in \N\}$ in the sense of Definition 3.2.1 in \cite{thebook} and we can assume that $k(1) \ge 1$ as well as that $k(l+1)>k(l)$. Furthermore, a simple calculation and the definition of $g_{T} / |g_T|$ show that the the statement of (iii) can be extended for every $C_j^{(l)}$ ($j=0,...,k(l)$) thanks to \cref{25041705} and the choice of $\eps_l$. Therefore $\Gamma$ is \textit{infinitesimal} and Theorem 3.2.14 in \cite{thebook} completes the proof, i.e. $M(A)$ is i.d.
\end{proof}
\section{Integrals with respect to ISRMs}
Let $M$ be a $\mathbb{K}^m$-valued ISRM on a $\delta$-ring $\mathcal{S}$, where we assume that $M$ is i.d. Then a matrix-valued mapping $f: S \rightarrow$ L$(\mathbb{K}^m)$ is called \textit{$\mathcal{S}$-simple}, if $f$ can be represented by $f=\sum_{j=1}^{n} R_j \indikatorzwei{A_j}$ with $R_1,...,R_n \in$ L$(\mathbb{K}^m)$ and $A_1,...,A_n \in \mathcal{S}$ disjoint. In this case we define the stochastic integral of $f \indikatorzwei{A}$ w.r.t $M$ by
\begin{equation} \label{eq:15031701}
I_M(f  \, \indikatorzwei{A}):= I(f  \, \indikatorzwei{A}) := \integral{A}{}{f}{dM}:=\integral{A}{}{f(s)}{M(ds)}:= \summe{j=1}{n} R_j \, M(A \cap A_j).
\end{equation}
Note that, in view of \cref{15031702}, the mentioned truncation is valid for every $A \in \sigma(\mathcal{S})$ and that the stochastic integral is well-defined a.s. by $(RM_2)$. Write $I_M(f)$ and so on for $A=S$.
\begin{defi} \label{20031706}
Let $f:S \rightarrow$ L$(\mathbb{K}^m)$ be $\sigma(\mathcal{S})$-$\B($L$(\mathbb{K}^m))$-measurable. 
\begin{itemize}
\item[(a)] $f$ is called \textit{$M$-integrable}, if there exists a sequence $(f_n)$ of $\mathcal{S}$-simple functions such that the following conditions hold:
\begin{itemize}
\item[($I_1$)] $f_n \rightarrow f$ pointwise $\lambda_M/ \lambda_{\Xi(M)}$-a.e. for $\mathbb{K}=\R / \C$.
\item[($I_2$)] The sequence $I(f_n \indikatorzwei{A})$ converges in probability for every $A \in \sigma(\mathcal{S})$ and we refer to this limit as $I_M(f \indikatorzwei{A})$ or any synonymous notation from \eqref{eq:15031701}, respectively.
\end{itemize}
\item[(b)] Consider $\mathbb{K}=\C$. If we relax $(I_2)$ in such a way that we merely want either the sequences $\text{Re } I(f_n \indikatorzwei{A})$ or the sequences $\text{Im } I(f_n \indikatorzwei{A})$ to converge for every $A \in \sigma(\mathcal{S})$, then $f$ is called \textit{partially $M$-integrable (in the real/imaginary sense)}. 
\end{itemize}
Finally we define
\begin{equation*}
\mathcal{I}_{(p)}(M):=\{f: (S,\sigma(\mathcal{S})) \rightarrow (\text{L}(\mathbb{K}^m), \B(\text{L}(\mathbb{K}^m))) \, | \, \text{$f$ is (partially) $M$-integrable} \}.
\end{equation*}
\end{defi}
\begin{remark} \label{20031703}
\begin{itemize}
\item[(i)] The previous definition coincides with \eqref{eq:15031701} for simple $f$, whereas the notation in $(I_2)$ will be justified by \cref{21081701} (a).
\item[(ii)] If the imaginary parts of $f$ and $M$ vanish, we get back the case $\mathbb{K}=\R$. 
\item[(ii)] The two types of partial integrability only differ in the consideration of $f$ and $- \im f$. Hence we restrict to partial integrability in the real sense and write $\text{Re } I_M(f \indikatorzwei{A})$ for the corresponding limit in $(b)$, even if $I_M (f \indikatorzwei{A})$ may not exist in the sense of $(a)$. However we have $\mathcal{I}(M) \subset \mathcal{I}_{(p)}(M)$, generally with non-equality.
\end{itemize}
\end{remark}
Now we state some useful properties, starting with the linearity which essentially illuminates the notation \textit{(stochastic) integral}. Throughout and for accuracy we should identify random vectors that are identical a.s. Also notice that ${}^{*}$ denotes the adjoint operator in the Hermitian sense.
\begin{prop} \label{04051701}
Let $M$ be as before. Then we have:
\begin{itemize}
\item[(a)] $\mathcal{I}(M)$ is a $\mathbb{K}$-vector space and the mapping $\mathcal{I}(M) \ni f \mapsto I_M(f)$ is linear a.s.
\item[(b)] $f \in \mathcal{I}(M)$ implies that for every $Q \in$ L$(\mathbb{K}^m)$ the function $Q \cdot f$, defined by $(Q \cdot f) (s)=Q f(s)$, also belongs to $\mathcal{I}(M)$ with $I_M(Q\cdot f) = Q I_M(f)$ a.s. 
\end{itemize}
Both statements hold accordingly for $\mathcal{I}_p(M)$ with $\mathbb{K}=\R$.
\end{prop}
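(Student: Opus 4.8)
The plan is to reduce both assertions to two elementary facts: first, that the $\mathcal{S}$-simple functions form a $\mathbb{K}$-vector space which is stable under left multiplication by a fixed matrix, and on which the elementary integral $I$ of \eqref{eq:15031701} acts linearly and satisfies $I((Q\cdot f)\indikatorzwei{A})=Q\,I(f\indikatorzwei{A})$; and second, that convergence in probability is preserved under $\mathbb{K}$-linear combinations and under left multiplication by a fixed $Q\in\text{L}(\mathbb{K}^m)$, the latter because $x\mapsto Qx$ is (Lipschitz-)continuous. Once these are in place, the defining conditions $(I_1)$ and $(I_2)$ of \cref{20031706} transfer along the obvious approximating sequences.

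For the elementary linearity, let $f=\sum_{i=1}^{n}R_i\indikatorzwei{A_i}$ and $g=\sum_{j=1}^{p}R_j'\indikatorzwei{B_j}$ be $\mathcal{S}$-simple with the $A_i$ (resp.\ the $B_j$) disjoint. Both vanish off the set $E:=(\cup_i A_i)\cup(\cup_j B_j)\in\mathcal{S}$, and since $\mathcal{S}$ behaves like an algebra on subsets of $E$, I pass to the finite disjoint refinement $\{C_k\}\subset\mathcal{S}$ generated by $A_1,\dots,A_n,B_1,\dots,B_p$ inside $E$. On each $C_k$ both $f$ and $g$ are constant, say with values $\tilde R_k$ and $\tilde R_k'$, so $af+bg=\sum_{k}(a\tilde R_k+b\tilde R_k')\indikatorzwei{C_k}$ is again $\mathcal{S}$-simple; comparing with \eqref{eq:15031701} and using that $M$ is finitely additive on disjoint sets (a consequence of $(RM_2)$, via $A_i=\cup_{C_k\subset A_i}C_k$) yields $I((af+bg)\indikatorzwei{A})=a\,I(f\indikatorzwei{A})+b\,I(g\indikatorzwei{A})$ a.s.\ for every $A\in\sigma(\mathcal{S})$ (recall \cref{15031702}). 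Likewise $Q\cdot f=\sum_{i}(QR_i)\indikatorzwei{A_i}$ is $\mathcal{S}$-simple and \eqref{eq:15031701} immediately gives $I((Q\cdot f)\indikatorzwei{A})=Q\,I(f\indikatorzwei{A})$.

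Now (a) follows: given $f,g\in\mathcal{I}(M)$ and $a,b\in\mathbb{K}$, pick $\mathcal{S}$-simple sequences $(f_n),(g_n)$ realizing $(I_1)$ and $(I_2)$ for $f$ and $g$. The functions $h_n:=af_n+bg_n$ are $\mathcal{S}$-simple and converge to $af+bg$ off the union of the two exceptional null sets, so $(I_1)$ holds; by the elementary linearity $I(h_n\indikatorzwei{A})=a\,I(f_n\indikatorzwei{A})+b\,I(g_n\indikatorzwei{A})$, and since each summand converges in probability so does the left-hand side, giving $(I_2)$ and the a.s.\ identity $I_M((af+bg)\indikatorzwei{A})=a\,I_M(f\indikatorzwei{A})+b\,I_M(g\indikatorzwei{A})$. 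For (b), take $f\in\mathcal{I}(M)$ with approximating sequence $(f_n)$ and put $h_n:=Q\cdot f_n$; these are $\mathcal{S}$-simple, $h_n\to Q\cdot f$ pointwise a.e.\ by continuity of $x\mapsto Qx$, and $I(h_n\indikatorzwei{A})=Q\,I(f_n\indikatorzwei{A})$ converges in probability to $Q\,I_M(f\indikatorzwei{A})$ because left multiplication by $Q$ preserves stochastic convergence.

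For $\mathcal{I}_p(M)$ one repeats the argument with $I$ replaced by $\text{Re}\,I$, restricting to real scalars $a,b$ and real matrices $Q$: here $\text{Re}$ is $\R$-linear and, for real $Q$, satisfies $\text{Re}(Qz)=Q\,\text{Re}(z)$, while $(I_1)$ is unchanged and the relaxed form of $(I_2)$ (convergence of the real parts, cf.\ \cref{20031703}) is again stable under $\R$-linear combinations and under left multiplication by real $Q$; this is exactly why the statement is phrased for $\mathbb{K}=\R$. I expect the only non-routine step to be the elementary linearity of $I$ on $\mathcal{S}$-simple functions, where the $\delta$-ring structure genuinely enters through the common refinement inside $E\in\mathcal{S}$ and the finite additivity of $M$; everything else is a soft stability property of convergence in probability.
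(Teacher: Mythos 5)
Your proposal is correct and follows essentially the same route as the paper: linearity of $I$ on $\mathcal{S}$-simple functions via a common (refined) partition, then transfer of $(I_1)$ and $(I_2)$ along the approximating sequences $h_n:=\alpha f_n+\beta g_n$ (resp.\ $Q\cdot f_n$) using stability of convergence in probability, with the same observation that complex scalars mix real and imaginary parts, which is exactly why the $\mathcal{I}_p(M)$ statement is restricted to $\mathbb{K}=\R$. The only difference is one of exposition: the paper declares the simple-function step ``obvious'' and part (b) ``quite similar,'' whereas you spell out the refinement inside $E\in\mathcal{S}$ and the finite additivity of $M$ explicitly.
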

\begin{proof}
The linearity in (a) is obvious for simple functions when considering a common partition $A_1,...,A_n \in \mathcal{S}$ and extends for general $f,g \in \mathcal{I}(M)$ (with $\mathcal{S}$-simple approximating sequences $(f_n)$ an $(g_n)$) since $h_n:= \alpha f_n+ \beta g_n$ approximates $h:=\alpha f + \beta g$ properly for $\alpha_1, \alpha_2 \in \mathbb{K}$. Merely note in the case of $\mathbb{K}=\C$ that 
\begin{equation*}
\text{Re } I_M(h_n \indikatorzwei{A}) =x_1 \text{Re } (f_n \indikatorzwei{A})- y_1 \text{Im } (f_n \indikatorzwei{A}) + x_2 \text{Re } (g_n \indikatorzwei{A})- y_2 \text{Im } (g_n \indikatorzwei{A}), \quad A \in \sigma(\mathcal{S}),
\end{equation*}
if $\alpha_i=x_i + \im y_i$; similarly for the imaginary parts. In particular we get $h \in \mathcal{I}(M)$ by additivity of the stochastic limit which implies that $\mathcal{I}(M)$ is a vector space. Part (b) and the additional statement for $\mathcal{I}_p(M)$ can be proven quite similarly. 
\end{proof}
For the time being we consider the case $\mathbb{K}=\R$. Recall from \eqref{eq:09121601} and \eqref{eq:18081701} the definition of $\lambda_M$ and $K_M$, respectively.
\begin{theorem} \label{21081701}
Let $M$ be as before. 
\begin{itemize}
\item[(a)] If $f \in \mathcal{I}(M)$, then $I_M(f \indikatorzwei{A})$ is i.d. for every $A \in \sigma(\mathcal{S})$ and its log-characteristic function is given by
\begin{equation} \label{eq:21081701}
\R^{m} \ni t \mapsto \integral{A}{}{ K_{M} ( f(s)^{*}t ,s)}{\lambda_{M}(ds)}. 
\end{equation}
Particularly the integral in \eqref{eq:21081701} exists and $I_M(f \indikatorzwei{A})$ is well-defined a.s.
\item[(b)] If $f_1,...,f_n \in \mathcal{I}(M)$, then we have for any $t_1,...,t_n \in \R^{m}$:
\begin{equation*}
\bigerwartung{  \text{e}^{\im \summezwei{j=1}{n} \skp{I(f_j)}{t_j}  }} = \exp \left( \, \integral{S}{}{K_{M} \left( \summezwei{j=1}{n} f_j(s)^{*} t_j ,s \right)}{\lambda_{M}(ds)}   \right).
\end{equation*}
\item[(c)] For $f_,f_1,f_2,... \in \mathcal{I}(M)$ we have that $I_M(f_n) \rightarrow I_M(f)$ in probability is equivalent to 
\begin{equation} \label{eq:21081703}
\integral{\R^m}{}{K_{M}(  (f_n(s)-f(s))^{*} t ,s)}{\lambda_{M}(ds)} \rightarrow 0, \quad t \in \R^{m}.
\end{equation}
\item[(d)] Let $f_1,f_2 \in \mathcal{I}(M)$ such that $\norm{f_1(s)} \cdot \norm{f_2(s)}=0$ holds $\lambda_{M}$-a.e. Then $I_M(f_1)$ and $I_M(f_2)$ are independent.
\end{itemize}
\end{theorem}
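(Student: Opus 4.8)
The plan is to establish part (a) first and to derive (b)--(d) from it, the unifying tool being the joint-characteristic-function computation for $\mathcal{S}$-simple functions together with $(RM_1)$ and \cref{29031701}. First I would check (a) for a simple $f=\sum_{j=1}^{n} R_j\indikatorzwei{A_j}$ with disjoint $A_j\in\mathcal{S}$. Here $I(f\indikatorzwei{A})=\sum_j R_j M(A\cap A_j)$, the summands are independent by $(RM_1)$, and the characteristic function of $R_j M(A\cap A_j)$ at $t$ equals that of $M(A\cap A_j)$ at $R_j^{*}t$, since $\erwartung{e^{\im\skp{R_j M(A\cap A_j)}{t}}}=\erwartung{e^{\im\skp{M(A\cap A_j)}{R_j^{*}t}}}$. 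By \cref{29031701}(iii) the latter has log-characteristic function $u\mapsto\int_{A\cap A_j}K_M(u,s)\,\lambda_M(ds)$, so summing the log-characteristic functions over $j$, using $f(s)^{*}t=R_j^{*}t$ on $A\cap A_j$ and $K_M(0,s)=0$ off $\bigcup_j A_j$, yields exactly \eqref{eq:21081701}; in particular $I(f\indikatorzwei{A})$ is i.d.\ for simple $f$.

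Next I would extend (a) to general $f\in\mathcal{I}(M)$ by approximation. Picking $\mathcal{S}$-simple $f_n$ satisfying $(I_1)$ and $(I_2)$, each $I(f_n\indikatorzwei{A})$ is i.d.\ with log-characteristic function $\psi_n(t):=\int_A K_M(f_n(s)^{*}t,s)\,\lambda_M(ds)$, and $I(f_n\indikatorzwei{A})\to I_M(f\indikatorzwei{A})$ in probability, hence in distribution. Since the class of i.d.\ laws on $\R^m$ is closed under weak limits, $I_M(f\indikatorzwei{A})$ is i.d.; its characteristic function $\varphi$ does not vanish, so by the continuity theorem $\psi_n(t)\to\log\varphi(t)$. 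The main obstacle is to identify $\log\varphi(t)$ with $\int_A K_M(f(s)^{*}t,s)\,\lambda_M(ds)$: pointwise $f_n(s)^{*}t\to f(s)^{*}t$ $\lambda_M$-a.e.\ and $K_M(\cdot,s)$ is continuous, so the integrands converge a.e., but one must justify passing to the limit under the integral and, at the same time, that the limiting integral is finite. I would do this by reading off from the weak convergence of the i.d.\ laws the convergence of the characteristic triplets (the general multivariate convergence result behind \cref{23111604}), which controls the drift, Gaussian and $\min\{1,\norm{x}^{2}\}$-Lévy pieces of $\psi_n(t)$ separately and forces $\psi_n(t)\to\int_A K_M(f(s)^{*}t,s)\,\lambda_M(ds)$, giving both existence of the integral and the formula; this is precisely where the univariate monotonicity arguments are unavailable.

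Part (b) I would obtain by the same two-step scheme applied to a common refinement: for simple $f_1,\dots,f_n$ written over one disjoint partition $A_1,\dots,A_N\in\mathcal{S}$ one has $\sum_j\skp{I(f_j)}{t_j}=\sum_k\skp{M(A_k)}{\sum_j f_j(s)^{*}t_j}$ on $A_k$, so independence of the $M(A_k)$ factorizes the joint characteristic function into $\exp\bigl(\int_S K_M(\sum_j f_j(s)^{*}t_j,s)\,\lambda_M(ds)\bigr)$, and the general case follows since componentwise convergence in probability gives joint convergence in distribution, the limit being identified as in (a). Part (c) is then immediate: by the vector-space property in \cref{04051701} we have $f_n-f\in\mathcal{I}(M)$ and $I_M(f_n)-I_M(f)=I_M(f_n-f)$, so $I_M(f_n)\to I_M(f)$ in probability is equivalent to $I_M(f_n-f)\to 0$ in distribution, i.e.\ to weak convergence of the i.d.\ law of $I_M(f_n-f)$ (whose log-characteristic function is given by (a)) to $\eps_0$, which by \cref{23111604} is equivalent to \eqref{eq:21081703}.

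Finally, part (d) follows cleanly from (a) and (b). Applying (b) with $n=2$ gives $\bigerwartung{e^{\im(\skp{I(f_1)}{t_1}+\skp{I(f_2)}{t_2})}}=\exp\bigl(\int_S K_M(f_1(s)^{*}t_1+f_2(s)^{*}t_2,s)\,\lambda_M(ds)\bigr)$ for all $t_1,t_2\in\R^m$. By the hypothesis $\norm{f_1(s)}\cdot\norm{f_2(s)}=0$ $\lambda_M$-a.e., at least one of $f_1(s)^{*}t_1$, $f_2(s)^{*}t_2$ vanishes for $\lambda_M$-almost every $s$, whence $K_M(f_1(s)^{*}t_1+f_2(s)^{*}t_2,s)=K_M(f_1(s)^{*}t_1,s)+K_M(f_2(s)^{*}t_2,s)$ a.e., using $K_M(0,s)=0$. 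Integrating and invoking (a) with $A=S$ splits the joint log-characteristic function into the sum of the two marginal ones, so the joint characteristic function factorizes for every $(t_1,t_2)$, which is exactly the independence of $I_M(f_1)$ and $I_M(f_2)$.
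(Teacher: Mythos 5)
Your proposal is correct in outline and, for parts (a) and (c), follows essentially the paper's route: handle $\mathcal{S}$-simple functions via $(RM_1)$ and \cref{29031701}, then pass to the limit, and reduce (c) by linearity to weak convergence of the i.d.\ laws of $I_M(f_n-f)$ to $\eps_0$. For (b) and (d) you take genuinely different paths. The paper obtains (b) with no new approximation argument at all (its proof is subsumed in that of \cref{20031701}(b)): writing $t_j=Q_je$ with $e=(1,\dots,1)$ and suitable matrices $Q_j$, both parts of \cref{04051701} give $\sum_j\skp{I(f_j)}{t_j}=\skp{I_M(\sum_j Q_j^{*}f_j)}{e}$, so (b) is literally an instance of (a) applied to the single function $\sum_j Q_j^{*}f_j\in\mathcal{I}(M)$. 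Your common-refinement computation for simple functions is fine, but in the passage to the limit the phrase ``identified as in (a)'' conceals exactly this reduction: (a) as stated concerns one square-matrix function and one vector $t$, not expressions of the form $\sum_j f_{j,l}(s)^{*}t_j$, so you either need the operator trick above (which makes your approximation step redundant) or must rerun the identification argument for stacked, rectangular functions. For (d) the paper argues directly with supports: $\lambda_M(A_1\cap A_2)=0$, so $M(A)=0$ a.s.\ for $A\subset A_1\cap A_2$ by \cref{20031705}(ii), the approximating simple integrals can be supported on disjoint sets, and $(RM_1)$ applies; your factorization of the joint characteristic function via (b) and $K_M(0,s)=0$ is a clean, correct alternative, at the price of depending on (b).

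The one place where your proposal is substantively weaker than you suggest is the identification step in (a), which you rightly single out as the main obstacle. Knowing that the laws of $I(f_n\indikatorzwei{A})$ converge weakly (hence that their triplets converge) and that $f_n\to f$ $\lambda_M$-a.e.\ does \emph{not} by itself ``force'' the limit of $\psi_n(t)$ to equal $\int_A K_M(f(s)^{*}t,s)\,\lambda_M(ds)$: a.e.\ convergence of integrands together with convergence of the integrals identifies the limit only under uniform integrability, and a priori the limiting integral need not even exist. The actual mechanism---this is the content of Proposition 2.6 in \cite{RajRo89}, which the paper invokes as a ``simple multivariate extension''---exploits that $(I_2)$ demands convergence for \emph{every} $A\in\sigma(\mathcal{S})$, so that the triplet pieces, viewed as set functions in $A$, converge setwise; Vitali--Hahn--Saks/Egorov-type arguments (compare the paper's own \cref{24031711}, which does the analogous work in the reverse direction) then give the uniform control needed to pass the a.e.\ convergence of $f_n$ through the integrals. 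So your plan names the right inputs, but the sentence claiming that triplet convergence ``forces'' the formula is precisely the nontrivial lemma to be proved, not a consequence of \cref{23111604}.
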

\begin{proof}
For simple $f$, one checks that $I_M(f \indikatorzwei{A})$ is i.d. (see Proposition 3.1.21 in \cite{thebook}) for every $A \in \sigma(\mathcal{S})$ while $K_M(0,\cdot)=0$ and \eqref{eq:08031702} yield that its characteristic function is given by \eqref{eq:21081701}. Note that $t \mapsto K_M(t,s)$ is the log-characteristic function of the distribution with triplet $[\alpha_M(s),\beta_M(s), \rho_M(s)]$, i.e. is continuous for every $s \in S$. On one hand this merely shows that the integral function in \eqref{eq:21081701} is really \textit{the} log-characteristic function of $I_M(f)$. On the other hand it allows us to perform a simple multivariate extension of Proposition 2.6 in \cite{RajRo89} which states that \eqref{eq:21081701} and the previous implication concerning the log-characteristic function also hold for general $f \in \mathcal{I}(M)$, namely the limit in $(I_2)$. This limit preserves the infinite divisibility and since the right-hand side in \eqref{eq:21081701} does not depend on the choice of approximating functions $(f_n)$, we see that $I_M(f \indikatorzwei{A})$ is uniquely determined a.s. after consideration of $(f_n-f_n')$, provided that $(f_n')$ also approximates $f$ properly. This immediately yields (a). The proof of (b) will be covered by the one in \cref{20031701} (b), while part (c) is a direct conclusion of (a), the linearity and Lemma 3.1.10 in \cite{thebook}. Finally for (d) we show that $\norm{f_1(s)} \cdot \norm{f_2(s)}=0$ expect a potential $\lambda_M$-null set implies the independence of $I_M(f_1)$ and $I_M(f_2)$. Define $A_i:=\{s:f_i(s) \ne 0\}$ $(i=1,2)$ and observe that $M(A)=0$ a.s. for every $A \subset(A_1 \cap A_2)$ by assumption and the use of \cref{20031705} (ii). Now if $(f_{n,i})$ is an approximating sequence of simple functions for $f_i$, we see that this also applies to $f_{n,i} \indikatorzwei{A_i}$ and that $I_M(f_{n,i} \indikatorzwei{A_i})=I_M(f_{n,i} \indikatorzwei{A_i \setminus (A_1 \cap A_2)})$ a.s. In view of $(RM_1)$ this gives the assertion. 
\end{proof}
In the following we are going to characterize the class $\mathcal{I}(M)$ for a given ISRM $M$ in terms of its control measure $\lambda_M$ and the related function $K_M$. Also recall the definition of $\alpha_M,\beta_M$ and $\rho_M$ in \cref{08031703} as well as in \cref{29031701} and define
\begin{align*}
&U_M:\text{L}(\R^m) \times S \rightarrow \R^m, \quad (R,s) \mapsto R\, \alpha_M(s)+ \integral{\R^m}{}{ \left( \frac{R \, x}{1+\norm{R\,x}^2} - \frac{R\,x}{1+\norm{x}^2} \right)}
{\rho_M(s,dx)} , \\
&V_M:\text{L}(\R^m) \times S \rightarrow \R_{+}, \quad (R,s) \mapsto  \integral{\R^m}{}{\min \{1, \norm{R\,x}^2\} }{\rho_M(s,dx)}.
\end{align*}
Recall that these functions are multivariate extensions of those in \cite{RajRo89} and a simple calculation shows that
\begin{equation} \label{eq:21031701}
\bignorm{ \frac{R \, x}{1+\norm{R\,x}^2} - \frac{R\,x}{1+\norm{x}^2}} \le \max \{2, \norm{R}+\norm{R}^3\} \, \min \{1,\norm{x}^2\}
\end{equation}
holds for all $R \in$ L$(\R^m)$ and $x \in \R^m$. Similarly and with the help of the Cauchy-Schwarz inequality we see that
\begin{equation} \label{eq:21031702}
\left| \frac{\skp{t}{y} }{1+\norm{y}^2}  - \sin \skp{t}{y} \right| \le (1+\norm{t}+\norm{t}^2) \min \{1,\norm{y}^2\}, \quad t,y \in \R^m.
\end{equation}
Observe that, in view of \eqref{eq:21031701}, $U_M$ exists. The following proposition is the first step in the promised characterization of $\mathcal{I}(M)$ and also provides the L\'{e}vy-Khintchine-Triplet of the i.d. random vector $I_M(f)$. But in contrast of the univariate case considered in \cite{RajRo89} in our situation the arguments are more involved.
\begin{prop} \label{27031702}
Assume that $f \in \mathcal{I}(M)$. Then the following integrals exist
\begin{equation*}
\gamma_{f}:= \integral{S}{}{U_M(f(s),s)}{\lambda_M(ds)}, \quad Q_{f}:=\integral{S}{}{f(s) \beta_m(s) f(s)^{*}}{\lambda_M(ds)}
\end{equation*}
and 
\begin{equation*}
\phi_{f}(A):= (\lambda_M \odot \rho_M) (\{(s,x) \in S \times \R^m: f(s)x \in A \setminus \{0\} \} ) , \quad A \in \B(\R^m)
\end{equation*}
defines a L\'{evy} measure. Moreover we have $I_M(f) \sim [\gamma_{f}, Q_{f}, \phi_{f}]$.
\end{prop}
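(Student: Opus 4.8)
The plan is to build on \cref{21081701}, which already asserts that $I_M(f)$ is i.d. and that its log-characteristic function equals $\psi_f(t)=\int_S K_M(f(s)^{*}t,s)\,\lambda_M(ds)$, the integral being absolutely convergent so that $\real{K_M(f(\cdot)^{*}t,\cdot)}$ and $\ima{K_M(f(\cdot)^{*}t,\cdot)}$ are separately $\lambda_M$-integrable. First I would rewrite the integrand in \eqref{eq:18081701} with $t$ replaced by $f(s)^{*}t$, using the adjoint identities $\skp{\alpha_M(s)}{f(s)^{*}t}=\skp{f(s)\alpha_M(s)}{t}$, $\skp{f(s)^{*}t}{x}=\skp{t}{f(s)x}$ and $\skp{\beta_M(s)f(s)^{*}t}{f(s)^{*}t}=\skp{f(s)\beta_M(s)f(s)^{*}t}{t}$, and then adding and subtracting the centering term $\im\skp{t}{f(s)x}/(1+\norm{f(s)x}^2)$ inside the $\rho_M$-integral. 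Collecting the resulting correction with $f(s)\alpha_M(s)$ reproduces exactly $U_M(f(s),s)$, so that
\begin{equation*}
K_M(f(s)^{*}t,s)=\im\skp{t}{U_M(f(s),s)}-\tfrac12\skp{f(s)\beta_M(s)f(s)^{*}t}{t}+\int_{\R^m}\left(\text{e}^{\im\skp{t}{f(s)x}}-1-\frac{\im\skp{t}{f(s)x}}{1+\norm{f(s)x}^2}\right)\rho_M(s,dx).
\end{equation*}
This is already the L\'evy-Khintchine integrand in the variable $y=f(s)x$, and it remains to integrate the three pieces over $S$ and to verify that they converge separately.

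Taking real parts, $\real{K_M(f(s)^{*}t,s)}=-\tfrac12\skp{f(s)\beta_M(s)f(s)^{*}t}{t}+\int_{\R^m}(\cos\skp{t}{f(s)x}-1)\,\rho_M(s,dx)$ is a sum of two non-positive terms with finite $\lambda_M$-integral, so each is separately $\lambda_M$-integrable. Letting $t$ range over a basis and polarizing shows that $Q_f=\int_S f(s)\beta_M(s)f(s)^{*}\,\lambda_M(ds)$ exists (it is symmetric and positive semi-definite by \cref{29031701}), and it also yields $\int_S\int_{\R^m}(1-\cos\skp{t}{f(s)x})\,\rho_M(s,dx)\,\lambda_M(ds)<\infty$ for every $t$. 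To upgrade this to the L\'evy property I would average over $t$: the elementary bound $\int_{[-T,T]^m}(1-\cos\skp{t}{y})\,dt\ge c_T\min\{1,\norm{y}^2\}$ and Tonelli give $\int_S V_M(f(s),s)\,\lambda_M(ds)\le c_T^{-1}\int_{[-T,T]^m}F(t)\,dt$, where $F(t)$ is the finite double integral just mentioned. Since $F(t)=-\real{\psi_f(t)}-\tfrac12\skp{Q_f t}{t}$ is continuous (as $\psi_f$ is a log-characteristic function), it is bounded on the box and the right-hand side is finite. Via \cref{08031703}, \eqref{eq:06031702} and the pushforward definition of $\phi_f$ this is exactly $\int_{\R^m}\min\{1,\norm{y}^2\}\,\phi_f(dy)=\int_S V_M(f(s),s)\,\lambda_M(ds)<\infty$; together with $\phi_f(\{0\})=0$ this makes $\phi_f$ a L\'evy measure.

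For the shift I would use the imaginary part: by \eqref{eq:21031702} the jump contribution $\int_{\R^m}(\sin\skp{t}{f(s)x}-\skp{t}{f(s)x}/(1+\norm{f(s)x}^2))\,\rho_M(s,dx)$ is dominated by $(1+\norm{t}+\norm{t}^2)\,V_M(f(s),s)$, hence $\lambda_M$-integrable by the previous step, while \eqref{eq:21031701} guarantees that the integrand defining $U_M$ is well defined. As $\ima{K_M(f(\cdot)^{*}t,\cdot)}$ is $\lambda_M$-integrable by \cref{21081701}, the remaining term $\skp{t}{U_M(f(s),s)}$ is $\lambda_M$-integrable too; taking $t=e_j$ shows each component of $U_M(f(\cdot),\cdot)$ is integrable, so $\gamma_f=\int_S U_M(f(s),s)\,\lambda_M(ds)$ exists. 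Integrating the displayed identity over $S$ — now justified since all three pieces converge — and applying \cref{08031703} with \eqref{eq:06031702} to the last piece yields
\begin{equation*}
\psi_f(t)=\im\skp{t}{\gamma_f}-\tfrac12\skp{Q_f t}{t}+\int_{\R^m}\left(\text{e}^{\im\skp{t}{y}}-1-\frac{\im\skp{t}{y}}{1+\norm{y}^2}\right)\phi_f(dy),
\end{equation*}
which is the L\'evy-Khintchine representation of $[\gamma_f,Q_f,\phi_f]$; uniqueness then gives $I_M(f)\sim[\gamma_f,Q_f,\phi_f]$.

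The main obstacle is the step showing that $\phi_f$ is a genuine L\'evy measure, i.e. the finiteness of $\int_S V_M(f(s),s)\,\lambda_M(ds)$: this is precisely where the univariate monotonicity arguments of \cite{RajRo89} no longer apply and must be replaced by the averaging estimate over $[-T,T]^m$ together with the continuity of $F$ that licenses Tonelli. The remaining care is organizational, namely ordering the computation so as to avoid circularity: one treats the real part first to produce $Q_f$ and $\phi_f$, and only afterwards uses the imaginary part together with \eqref{eq:21031702} to extract $\gamma_f$.
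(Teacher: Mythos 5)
Your proposal is correct and takes essentially the same approach as the paper's proof: both extract $Q_f$ and the integral $\int_S\int_{\R^m}(1-\cos\skp{t}{f(s)x})\,\rho_M(s,dx)\,\lambda_M(ds)$ from the real part of $\int_S K_M(f(s)^{*}t,s)\,\lambda_M(ds)$, deduce that $\phi_f$ is a L\'evy measure from continuity of this quantity in $t$, then obtain $\gamma_f$ from the imaginary part via \eqref{eq:21031702}, and conclude by uniqueness of the L\'evy-Khintchine representation. The only difference is presentational: you spell out the averaging/Tonelli estimate over $[-T,T]^m$ explicitly, whereas the paper delegates exactly this step to the proof of Theorem 3.3.10 in \cite{Ro87}.
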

\begin{proof}
The given assumption and \cref{21081701} (a) ensure the existence of
\begin{equation} \label{eq:21031707}
 \integral{S}{}{  K_M(f(s)^{*}t,s) }{ \lambda_M(ds)}
\end{equation}
for every $t \in \R^m$ as well as the continuity of 
\begin{equation} \label{eq:21031704}
\R^m \ni t \mapsto \integral{S}{}{ \text{Re } K_M(f(s)^{*}t,s) }{ \lambda_M(ds)}.
\end{equation}
Indeed, both statements will suffice to perform the present proof. \cref{29031701} (b) permits the following decomposition for every $t \in \R^m$ and the use of \eqref{eq:06031702} combined with the definition of $\phi_f$ yields
\allowdisplaybreaks
\begin{align*}
& \integral{S}{}{ \text{Re } K_M(f(s)^{*}t,s) }{ \lambda_M(ds)} \\
&= -  \integral{S}{}{\frac{1}{2}\skp{\beta_M(s)f(s)^{*}t}{f(s)^{*}t}  }{\lambda_M(ds)} - \integral{S}{}{ \integral{\R^m}{}{ (1 - \cos \skp{f(s)^{*}t}{x})}{\rho_M(s,dx)}}{ \lambda_M(ds)} \\
&= -  \integral{S}{}{\frac{1}{2} \skp{f(s) \beta_M(s)f(s)^{*}t}{t}  }{\lambda_M(ds)} - \integral{ \R^m}{}{ (1 - \cos \skp{t}{x})}{\phi_f(dx)}.
 \end{align*}
Now let $C(s):=f(s) \beta_M(s)f(s)^{*}$ with $C(s)=(C^{i,j}(s))_{i,j=1,...,m}$ and first consider $t=e_i$ to check the $\lambda_M$-integrability of the diagonal components $C^{i,i}$. Repeat this argument for $t=e_i+e_j$ for the $\lambda_M$-integrability of $C^{i,j}+C^{j,i}$ which finally gives the existence of $Q_f$ due to the symmetry in \cref{29031701} (b). Here we should also note that $Q_f$ is symmetric and positive semi-definite since $\beta_M$ is (at least $\lambda_M$-a.e.). In particular we know that 
\begin{align}
  \integral{\R^m}{}{ (1 - \cos \skp{t}{x})}{\phi_f(dx)}=  -\frac{1}{2} \skp{Q_f t}{t} -\integral{S}{}{\text{Re } K(f(s)^{*}t,s)}{\lambda_M(ds)}, \quad t \in \R^m. \label{eq:13071701}
\end{align}
Hence the left-hand side is continuous in $t$ according to \eqref{eq:21031704}, i.e. $\phi_f$ is a L\'{e}vy measure, if we include $\phi_f(\{0\})=0$ and  perform similar steps as done in the proof of Theorem 3.3.10 in \cite{Ro87}. Then we can argue as above that this implies the $\lambda_M$-integrability of $V_M(f(\cdot),\cdot)$. For the existence of $\gamma_f$ it finally suffices to show that $\skp{t}{U_M(f(\cdot),\cdot)}$ is $\lambda_M$-integrable for every $t \in \R^m$. Observe that we have the decomposition 
\begin{equation*}
\skp{t}{U(f(s),s)} = \text{Im } K_M(f(s)^{*}t,s)+  \integral{\R^m}{}{\left(\frac{\skp{t}{f(s)x}}{1+\norm{f(s)x}^2}   - \sin \skp{t}{f(s) x}  \right)}{\rho_M(s,dx)}  
\end{equation*} 
for every $s \in S,t \in \R^m$ in view of \eqref{eq:21031701} and \eqref{eq:21031702}. Furthermore, \eqref{eq:21031702} implies that
\begin{align*}
\integral{S}{}{|\skp{t}{U(f(s),s)}| }{\lambda_M(ds)} & \le \integral{S}{}{|K(f(s)^{*}t,s)|}{\lambda_M(ds)} + C(t) \integral{S}{}{V(f(s),s)}{\lambda_M(ds)}< \infty
\end{align*}
with $C(t):=1+ \norm{t}+\norm{t}^2$ and because of what we have shown before. Now it is easy to see that $I_M(f) \sim [\gamma_{f}, Q_{f}, \phi_{f}]$.
\end{proof}
\begin{lemma}  \label{24031710}
Let $f: S \rightarrow$ L$(\R^m)$ be measurable. Then the inequality
\begin{equation*} 
\norm{U(f(s) \indikator{A}{s},s)} \le \norm{U(f(s),s)} \indikator{A}{s}+ 2 V(f(s),s).
\end{equation*}
holds for every $A \in \sigma(\mathcal{S})$ and $s \in S$.
\end{lemma}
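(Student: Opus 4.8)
The plan is to argue pointwise in $s$, exploiting that the scalar $\indikator{A}{s}$ takes only the values $0$ and $1$, so that the matrix $f(s)\indikator{A}{s}$ equals either $f(s)$ or the zero operator. The one preliminary fact I need is that $U$ vanishes at the zero operator: inserting $R=0$ into the definition of $U_M$ kills the shift term $R\,\alpha_M(s)$ and makes the integrand $\frac{R\,x}{1+\norm{R\,x}^2}-\frac{R\,x}{1+\norm{x}^2}$ identically zero (because $R\,x=0$), so $U(0,s)=0$. I will also invoke that $U$ is well-defined for every operator and every $s$, which the paper already records via \eqref{eq:21031701} together with \eqref{eq:07031701}.

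I would then split into the two cases. On $\{s\in A\}$ one has $f(s)\indikator{A}{s}=f(s)$ and $\indikator{A}{s}=1$, so the left-hand side equals $\norm{U(f(s),s)}=\norm{U(f(s),s)}\indikator{A}{s}$, and the inequality holds because $2V(f(s),s)\ge 0$. On $\{s\notin A\}$ the argument of $U$ is the zero operator, so by the preliminary fact the left-hand side is $\norm{U(0,s)}=0$, while the right-hand side is $2V(f(s),s)\ge 0$; the inequality again holds. Combining the two cases yields the assertion for every $s\in S$ and every $A\in\sigma(\mathcal{S})$.

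I do not expect a genuine obstacle here: the two cases in fact establish the sharper identity $U(f(s)\indikator{A}{s},s)=\indikator{A}{s}\,U(f(s),s)$, so the $U$-part of the inequality is really an equality and the summand $2V(f(s),s)$ is a nonnegative slack term. The only point that deserves a line of care is the verification $U(0,s)=0$; everything else is immediate from the $\{0,1\}$-valuedness of the indicator. My reading of why the (redundant) term $2V(f(s),s)$ is carried along is that it is independent of $A$ and, for $f\in\mathcal{I}(M)$, is $\lambda_M$-integrable by \cref{27031702}, so the stated form is the one that is convenient as a dominating function in the limiting arguments that follow.
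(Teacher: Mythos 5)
Your proof is correct, but it takes a genuinely different route than the paper's. The paper does not argue by cases: it applies \eqref{eq:21031701} with $\tilde{R}:=\indikator{A}{s}I_m$ and $\tilde{x}:=f(s)x$ (so that $\norm{\tilde{R}}\le 1$ and the constant becomes $2$), writes the integrand in the definition of $U_M(f(s)\indikator{A}{s},s)$ as the sum of $\frac{\tilde{R}\tilde{x}}{1+\norm{\tilde{R}\tilde{x}}^2}-\frac{\tilde{R}\tilde{x}}{1+\norm{\tilde{x}}^2}$ and $\indikator{A}{s}\bigl(\frac{f(s)x}{1+\norm{f(s)x}^2}-\frac{f(s)x}{1+\norm{x}^2}\bigr)$, and then estimates: the first summand integrates to a vector of norm at most $2V_M(f(s),s)$ by \eqref{eq:21031701}, while the second summand together with the shift term reassembles to $\indikator{A}{s}U_M(f(s),s)$. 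Your argument instead exploits that $\indikator{A}{s}\in\{0,1\}$, so that $f(s)\indikator{A}{s}$ is either $f(s)$ or the zero operator, which combined with the (correctly verified) fact $U_M(0,s)=0$ yields the exact identity $U_M(f(s)\indikator{A}{s},s)=\indikator{A}{s}\,U_M(f(s),s)$; in particular you show that the correction term the paper bounds by $2V_M(f(s),s)$ is actually zero pointwise, so the inequality of the lemma carries a purely redundant nonnegative slack. What the paper's route buys is robustness: it mirrors the univariate argument behind Lemma 2.8 in \cite{RajRo89} and remains valid if the indicator is replaced by any scalar $c(s)\in[-1,1]$ or any operator of norm at most one, situations in which $U_M$ is not homogeneous in its first argument and your identity breaks down, while the additive error $2V_M(f(s),s)$ still controls the discrepancy. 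What your route buys is economy and sharpness: no appeal to \eqref{eq:21031701}, and the insight that for indicator multipliers the $U$-part is an equality. Both versions suffice for the lemma's only application (the third step of the proof of \cref{22031701}, where only indicator truncations occur), and your reading of why the $2V_M$-term is kept --- an $A$-independent, $\lambda_M$-integrable dominating function in view of \cref{27031702} --- matches how the paper uses it.
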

\begin{proof}
With a little abuse of notation apply \eqref{eq:21031701} to $\tilde{R}:=\indikator{A}{s} I_m$ and $\tilde{x}:=f(s)x$. Then some simple calculations provide the desired conclusion.
\end{proof}
The previous Lemma can be regarded as a multivariate alternative for Lemma 2.8 in \cite{RajRo89}, whereas the following one uses some ideas from the proof of Theorem 3.2.2 in \cite{Ro87}.
\begin{lemma} \label{24031711}
For $f \in \mathcal{I}(M)$ let $(f_n)_{n \in \N}$ be a corresponding sequence of simple functions. Then for any $\eps_1,\eps_2>0$ there exists an $\zeta=\zeta(\eps_1,\eps_2)$ such that
\begin{equation*}
\forall n \ge \zeta \, \, \, \forall A \in \sigma(\mathcal{S}): \quad \Pro(\norm{I(f \indikatorzwei{A}) -I(f_n \indikatorzwei{A}) } \ge \eps_1) \le \eps_2.
\end{equation*}
\end{lemma}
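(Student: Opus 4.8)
The plan is to turn the tail probability into a single quantity that no longer depends on $A$ and then make it small uniformly in $A$. First, since $\mathcal{I}(M)$ is a vector space on which $I_M$ acts linearly a.s.\ (\cref{04051701}), the simple function $f_n$ lies in $\mathcal{I}(M)$ and $I(f\indikatorzwei{A})-I(f_n\indikatorzwei{A})=I((f-f_n)\indikatorzwei{A})$ a.s.\ for every $A\in\sigma(\mathcal{S})$. Applying \cref{25041705} with $\delta=\eps_1$ to $X:=I((f-f_n)\indikatorzwei{A})$ gives
\[
\Pro(\norm{I((f-f_n)\indikatorzwei{A})}\ge\eps_1)\le C'(\eps_1)\int_{[-\eps_1,\eps_1]^m}(1-\varphi_{n,A}(t))\,dt,
\]
where $\varphi_{n,A}$ is the characteristic function of $X$. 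By \cref{21081701}(a) its log-characteristic function is $\psi_{n,A}(t)=\int_A K_M((f(s)-f_n(s))^{*}t,s)\,\lambda_M(ds)$; since $X$ is $\R^m$-valued we have $\varphi_{n,A}(-t)=\overline{\varphi_{n,A}(t)}$, so on the symmetric cube $1-\varphi_{n,A}$ may be replaced by $1-\operatorname{Re}\varphi_{n,A}\ge 0$.

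The key step is to dominate the integrand uniformly in $A$. As $\operatorname{Re}K_M(\cdot,s)\le 0$ (it is itself a log-characteristic function), we have $\operatorname{Re}\psi_{n,A}(t)\le 0$, whence the elementary bound $|1-e^{z}|\le|z|$ for $\operatorname{Re}z\le 0$ yields $1-\operatorname{Re}\varphi_{n,A}(t)\le|\psi_{n,A}(t)|$. Splitting real and imaginary parts and using $\operatorname{Re}K_M\le 0$ together with $A\subseteq S$, I would estimate
\[
|\psi_{n,A}(t)|\le -\int_S\operatorname{Re}K_M((f-f_n)^{*}t,s)\,\lambda_M(ds)+\int_S|\operatorname{Im}K_M((f-f_n)^{*}t,s)|\,\lambda_M(ds)=:R_n(t)+J_n(t),
\]
a bound free of $A$. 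It then suffices to choose $\zeta=\zeta(\eps_1,\eps_2)$ so large that $C'(\eps_1)\int_{[-\eps_1,\eps_1]^m}(R_n(t)+J_n(t))\,dt\le\eps_2$ for all $n\ge\zeta$, which settles every $A$ at once.

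The remaining task, and the main obstacle, is to prove $\int_{[-\eps_1,\eps_1]^m}(R_n(t)+J_n(t))\,dt\to 0$. Condition $(I_1)$ forces $(f(s)-f_n(s))^{*}t\to 0$ for $\lambda_M$-a.e.\ $s$, and the continuity of $K_M(\cdot,s)$ with $K_M(0,s)=0$ then makes the two integrands tend to $0$ pointwise a.e.; the difficulty is to pass this convergence through the $s$-integral (so that $R_n(t),J_n(t)\to 0$ for each $t$) and afterwards through the $t$-integral over the compact cube. For the inner passage I would produce a $\lambda_M$-integrable majorant of $|K_M((f-f_n)^{*}t,s)|$, valid for all $t$ in the cube, from the estimates \eqref{eq:21031701}--\eqref{eq:21031702} and the integrability of $V_M((f-f_n)(\cdot),\cdot)$ guaranteed by \cref{27031702}; dominated convergence then gives $R_n(t)+J_n(t)\to 0$, and a second application over the cube (of finite Lebesgue measure) finishes the argument. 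The genuinely delicate point is that this majorant must be uniform in $n$, which is not immediate from \cref{27031702} applied to a single $f-f_n$ but rests on the Cauchy character of the approximating sequence $(f_n)$; this is the multivariate analogue of the bookkeeping in the proof of Theorem 3.2.2 in \cite{Ro87}.
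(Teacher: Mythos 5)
Your reduction via \cref{25041705} is correct and is a genuinely different first step than the paper's: since $\operatorname{Re}K_M(\cdot,s)\le 0$, the chain $1-\operatorname{Re}\varphi_{n,A}(t)\le|1-e^{\psi_{n,A}(t)}|\le|\psi_{n,A}(t)|\le R_n(t)+J_n(t)$ does eliminate $A$, and it bypasses the paper's decomposition of $I((f-f_n)\indikatorzwei{A})$ into a drift term plus a centered term (with its independence/monotonicity argument for the metric $d$). Note also that $R_n(t)\to 0$ needs no dominated convergence at all: writing $g_n:=f-f_n$, one has $R_n(t)=\frac12\skp{Q_{g_n}t}{t}+\int_{\R^m}(1-\cos\skp{t}{x})\,\phi_{g_n}(dx)$ by the computation in \cref{27031702}, and since $I(g_n)\to 0$ in probability, \cref{23111604} gives $Q_{g_n}\to 0$ and $\int\min\{1,\norm{x}^2\}\,\phi_{g_n}(dx)\to 0$, hence $R_n\to 0$ uniformly on the cube; the same argument kills the $V_M$-portion of $J_n(t)$ via \eqref{eq:21031702}.

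The gap is the remaining piece of $J_n(t)$, which is in substance $\int_S\norm{U_M(g_n(s),s)}\,\lambda_M(ds)$, the \emph{total variation} of the drift measure $\gamma_{g_n}(A)=\int_A U_M(g_n(s),s)\,\lambda_M(ds)$ --- exactly where the entire difficulty of the lemma sits --- and the mechanism you propose for it cannot work. Conditions $(I_1)$--$(I_2)$ constrain $(f_n)$ only pointwise and through set-indexed integrals, never by an envelope, so a uniform-in-$n$ integrable majorant need not exist: take $m=1$, the deterministic ISRM $M(A)=\lambda(A)$ a.s.\ (so $K_M(t,s)=\im t$, $U_M(r,s)=r$), $f=0$ and $f_n=\frac{2^n}{n}\indikatorzwei{E_n}$ with $(E_n)\subset\mathcal{S}$ disjoint, $\lambda(E_n)=2^{-n}$; then $(I_1)$, $(I_2)$ hold because $I(f_n\indikatorzwei{A})\le 1/n$, yet the smallest majorant $\sum_n\frac{2^n}{n}\indikatorzwei{E_n}$ has infinite integral, so no dominated-convergence argument with an $n$-free majorant can reach the (true) conclusion. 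Nor can you retreat to setwise convergence: \cref{23111604} does give $\int_A\operatorname{Im}K_M(g_n(s)^{*}t,s)\,\lambda_M(ds)\to 0$ for each fixed $A$, but setwise convergence of signed measures to zero does not imply total-variation convergence (densities $\sin(ns)$ on $[0,2\pi]$ converge setwise to $0$ while their total variations do not). What rescues the statement is combining the pointwise a.e.\ convergence from $(I_1)$ with \emph{uniform} absolute continuity of the measures $\gamma_{g_n}$ with respect to an auxiliary finite measure $\lambda_M^{*}$ --- i.e.\ the Hahn--Saks--Vitali theorem --- and then Egorov's theorem, splitting $S$ into a set of finite $\lambda_M$-measure where $U_M(g_n(\cdot),\cdot)\to 0$ uniformly and a remainder of small $\lambda_M^{*}$-measure handled by the uniform absolute continuity. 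That is precisely the paper's argument; without it (or an equivalent uniform-integrability substitute), your proof is missing its decisive step.
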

\begin{proof}
Let $g_n:=f-f_n$. Then by linearity, \cref{27031702} and \cref{23111604} we have that 
\begin{equation} \label{eq:27031710}
\gamma_{g_n}(A):= \integral{A}{}{U( g_n(s),s) }{\lambda_M(ds)} \rightarrow 0, \quad A \in \sigma(\mathcal{S}).
\end{equation}
This convergence is even uniform in $A$. To prove this we define the measure
\begin{equation*}
\lambda_M^{*}(E):= \summe{l=1}{\infty} 2^{-l} \frac{\lambda_M(E \cap S_l)}{1+\lambda_M(S_l)}, \quad E \in \sigma(\mathcal{S}),
\end{equation*}
where $(S_l) \subset \mathcal{S}$ is a disjoint exhaustion of $S$ again. Then $A \mapsto \gamma_{g_n}(A)$ defines a vector measure with $\gamma_{g_n} \ll \lambda_M \ll \lambda_M^{*}$, i.e. the components $\gamma_{g_n}^{(k)}$ are signed measures with $\gamma_{g_n}^{(k)} \ll \lambda_M^{*}$ for every $n \in \N$ and $k=1,...,m$. Thus we can apply the Hahn-Saks-Vitali Theorem (see Proposition C.3 in \cite{Ryan}): For every $\eps>0$ there are $\delta_1,...,\delta_m>0$ fulfilling the implications
\begin{equation*}
\forall A \in \sigma(\mathcal{S}): \qquad \left(  \lambda_M^{*}(A) \le \delta_k  \quad \Rightarrow \quad \underset{n \in \N}{ \sup} \, |\gamma_{g_n}^{(k)}(A)| \le \eps \right)
\end{equation*}
for $k=1,...,m$. Hence there exists a $C>0$ such that the following assertion holds likewise with $\delta:=\min \{\delta_1,...,\delta_m\}$:
\begin{equation} \label{eq:27031705}
\forall A \in \sigma(\mathcal{S}): \qquad \left(  \lambda_M^{*}(A) \le \delta  \quad \Rightarrow \quad \underset{n \in \N}{ \sup} \, \norm{\gamma_{g_n}(A)} \le C   \,\eps \right).
\end{equation}
Using dominated convergence we have that $U_M(\cdot,s)$ is continuous for each $s \in S$ and therefore that $U_M(g_n(s),s) \rightarrow 0$ $\lambda_M$-a.e. Proceeding with Egorov's Theorem (note that $\lambda_M^{*}$ is finite) there exists a measurable set $D'$ such that the previous convergence is uniformly on $D'$ with $\lambda_M^{*}(S \setminus D') \le \delta/2$. Finally, we use $(S_l)$ and \cref{15031702} to verify that same is true on an appropriate set $D$ belonging to $\mathcal{S}$ with $\lambda_M^{*}(S \setminus D) \le \delta$. Especially we have $\lambda_M(D)< \infty$ as well as the following estimation for every $A \in \sigma(\mathcal{S})$:
\begin{align*}
\norm{\gamma_{g_n}(A)} & \le C   \,\eps  + \underset{s \in A \cap D}{\sup} \, \norm{U(g_n(s),s)} \cdot \lambda_M(A \cap D) \\
& \le C  \,\eps  + \underset{s \in  D}{\sup} \, \norm{U(g_n(s),s)} \cdot \lambda_M( D), 
\end{align*}
which obviously means that \eqref{eq:27031710} holds uniformly. Moreover, for $\R^m$-valued random vectors $X$ and $Y$, we can define $d(X,Y):= \int \min \{1,\norm{X-Y}\} \, d \Pro$ and know that $d$ is a metric whose induced convergence is equivalent to that in probability (when identifying random vectors which are equal a.s., see the proof of Theorem 6.7 in \cite{Kle08}). We now show for  $X_n(A):=I_M(g_n \indikatorzwei{A})-\gamma_{g_n}(A)$ that 
\begin{equation*}
c_n:= \underset{A \in \sigma(\mathcal{S})}{\sup} \, d(X_n(A),0) \in [0,2], \quad n \in \N
\end{equation*}
converges to zero. For this purpose choose $A_n \in \sigma(\mathcal{S})$ such that $c_n \le d(X_n(A_n),0)+1/n$. At the same time we have
\begin{equation*}
I_M(g_n)=X_n(A_n) + I_M(g_n \indikatorzwei{A_n^c})+ \gamma_{g_n}(A_n) =: X_n(A_n) + Y_n \rightarrow 0
\end{equation*}
in probability (see above). This also implies $X_n(A_n) \rightarrow 0$ by \cref{23111604} and monotonicity. For instance and provided that $X_n(A_n) \sim [0,Q_n,\phi_n]$ as well as $Y_n \sim [\tilde{\gamma}_n,\tilde{Q}_n,\tilde{\phi}_n]$ we obtain:
\begin{equation*}
0 \le \skp{Q_n t}{t} \le  \skp{Q_n t}{t} + \skp{\tilde{Q}_n t}{t} = \skp{(Q_n+\tilde{Q}_n)t}{t} \rightarrow 0, \quad t \in \R^m
\end{equation*}
since $Q_n+\tilde{Q}_n$ equals the Gaussian component of $I_M(g_n)$ by independence of $X_n(A_n)$ and $Y_n$ (see \cref{21081701} (d) and Proposition 3.1.21 in \cite{thebook}). Hence $c_n \rightarrow 0$. Furthermore, we see that $d(I_M(g_n \indikatorzwei{A}),0) \le d(X_n(A),0)+\norm{\gamma_{g_n}(A)}$ holds for every $A \in \sigma(\mathcal{S})$ and $n \in \N$ due to the fact that $[0,\infty) \ni x \mapsto \min \{1,x\}$ is subadditive. By what we have seen before this shows that $d(I_M(g_n \indikatorzwei{A}),0)$ converges to $0$ uniformly in $A \in \sigma(\mathcal{S})$. Finally let $0< \eps_1 \le 1$ arbitrary ($\eps_1>1$ obvious), then we obtain the assertion by reading this convergence together with
\begin{equation*}
\Pro(\norm{ I(f \indikatorzwei{A}) -I(f_n \indikatorzwei{A}) } \ge \eps_1)  =  \Pro(\norm{I(g_n \indikatorzwei{A}) } \ge \eps_1) 
 \le \eps_1^{-1} \underset{A \in \sigma(\mathcal{S})}{\sup} \, d(I(g_n \indikatorzwei{A}) ,0),
\end{equation*}
where we used that $\Pro(\norm{X} \ge \eps_1) \le d(X,0) / \eps_1$ (for any random vector $X$).
\end{proof}
\begin{theorem} \label{22031701}
Let $f: S \rightarrow$ L$(\R^m)$ be $\sigma(\mathcal{S})$-$\B(\Li{m})$-measurable. Then the following statements are equivalent:
\begin{itemize}
\item[(I)] $f \in \mathcal{I}(M)$.
\item[(II)] The integrals $\gamma_f$ as well as $Q_f$ exist and $\phi_f$ is a L\'{evy} measure.
\item[(III)] The integral in \eqref{eq:21031707} exists for every $t \in \R^m$ and the mapping in \eqref{eq:13071701} is continuous.
\end{itemize}
\end{theorem}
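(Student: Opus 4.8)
The plan is to prove the cycle $(I)\Rightarrow(III)\Rightarrow(II)\Rightarrow(I)$, where the first two steps are essentially free given the preceding results and all the real work sits in $(II)\Rightarrow(I)$.

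First, $(I)\Rightarrow(III)$ is immediate from \cref{21081701} (a): if $f\in\mathcal{I}(M)$, then $I_M(f)$ is i.d.\ with log-characteristic function \eqref{eq:21081701} taken at $A=S$, so the integral \eqref{eq:21031707} exists for every $t$ and, being a continuous log-characteristic function, it forces the continuity of its real part \eqref{eq:21031704}, equivalently of the mapping in \eqref{eq:13071701}. Next, $(III)\Rightarrow(II)$ needs no new argument at all: the proof of \cref{27031702} uses nothing about $f$ beyond the existence of \eqref{eq:21031707} and the continuity of \eqref{eq:21031704}, and from these two facts alone it already derives the existence of $\gamma_f$ and $Q_f$ and the fact that $\phi_f$ is a L\'evy measure. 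I would therefore simply point back to that proof.

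The substantial implication is $(II)\Rightarrow(I)$, for which I would exhibit an explicit approximating sequence and verify $(I_1)$ and $(I_2)$ of \cref{20031706}. Using a disjoint exhaustion $(S_l)\subset\mathcal{S}$ together with \cref{15031702}, the sets $\{s:\norm{f(s)}\le n\}\cap S_n$ lie in $\mathcal{S}$, so I can first cut $f$ down to a bounded function supported on a set of finite $\lambda_M$-measure and then quantize its values, producing $\mathcal{S}$-simple $f_n$ with $f_n\to f$ pointwise $\lambda_M$-a.e.; this gives $(I_1)$. For $(I_2)$ I would show that $(I_M(f_n\indikatorzwei{A}))_n$ is Cauchy in probability, uniformly in $A\in\sigma(\mathcal{S})$. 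Since the differences $g_{n,k}:=f_n-f_k$ are simple, \cref{27031702} furnishes the triplet of the i.d.\ vector $I_M(g_{n,k}\indikatorzwei{A})$ explicitly, and by \cref{23111604} it suffices to prove, as $n,k\to\infty$, that $\gamma_{g_{n,k}\indikatorzwei{A}}\to 0$, that $Q_{g_{n,k}\indikatorzwei{A}}\to 0$, and that $\int_{\R^m}\min\{1,\norm{x}^2\}\,\phi_{g_{n,k}\indikatorzwei{A}}(dx)\to 0$. Each integrand tends to $0$ pointwise (continuity of $U_M(\cdot,s)$ and $V_M(\cdot,s)$ at the origin), so the whole matter reduces to a dominated-convergence argument with $\lambda_M$-integrable envelopes; the pointwise estimates \eqref{eq:21031701}, \eqref{eq:21031702} and \cref{24031710} supply such envelopes, whose integrability must be extracted from the three hypotheses in $(II)$, namely the $\lambda_M$-integrability of $U_M(f(\cdot),\cdot)$, of $f(\cdot)\beta_M(\cdot)f(\cdot)^{*}$ and of $V_M(f(\cdot),\cdot)$.

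The hard part will be exactly this domination. In the univariate theory of \cite{RajRo89} one exploits that $\norm{f_n}\le\norm{f}$ forces $V(f_n(s),s)\le V(f(s),s)$ (and analogously for the other envelopes), so monotone or dominated convergence applies at once. In the matrix-valued setting $\norm{R}\le\norm{R'}$ does \emph{not} imply $\norm{Rx}\le\norm{R'x}$, so this monotonicity is lost and a crude value-quantization of $f$ need not keep the difference-envelopes below the $f$-envelopes: the quantization error may point into directions in which $\beta_M$ or $\rho_M$ is large even where $f$ annihilates them. I would therefore choose the approximating sequence so that each $f_n(s)$ stays \emph{aligned} with $f(s)$ --- truncating the support and scaling before quantizing, so that $g_{n,k}(s)x$ is controlled by $f(s)x$ --- and combine this with the quasi-subadditivity in $R$ encoded by \eqref{eq:21031701} to bound the difference-envelopes by constant multiples of the integrable envelopes furnished by $(II)$. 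Once the uniform Cauchy property is established, the common limit defines $I_M(f\indikatorzwei{A})$ for every $A\in\sigma(\mathcal{S})$ and yields $f\in\mathcal{I}(M)$, closing the cycle.
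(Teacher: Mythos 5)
Your overall architecture --- reducing everything to $(II)\Rightarrow(I)$ and disposing of $(I)\Rightarrow(III)$ via \cref{21081701} and $(III)\Rightarrow(II)$ via the proof of \cref{27031702} --- is exactly the paper's, and that part is fine. The gap is in your fix for the domination problem in $(II)\Rightarrow(I)$. You correctly diagnose the multivariate obstruction (operator-norm control $\norm{f_n(s)}\le C\norm{f(s)}$ does not control $\norm{f_n(s)x}$ by $\norm{f(s)x}$, so quantization errors can point into directions where $\beta_M$ or $\rho_M(s,\cdot)$ is large although $f(s)$ annihilates them), but the remedy you propose --- a single sequence of $\mathcal{S}$-simple approximants that stay \emph{aligned} with $f(s)$, ``truncating the support and scaling before quantizing, so that $g_{n,k}(s)x$ is controlled by $f(s)x$'' --- cannot be realized. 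An $\mathcal{S}$-simple function takes finitely many matrix values, while alignment would force its values onto the rays $\{\theta f(s):\theta\in\R\}$, which sweep out uncountably many directions as $s$ varies; scaling before quantizing does not help, because after quantization you are again off these rays. Consequently no constant multiple of $V_M(f(\cdot),\cdot)$, $U_M(f(\cdot),\cdot)$ or $f(\cdot)\beta_M(\cdot)f(\cdot)^{*}$ serves as an envelope for the differences, your dominated-convergence step has no dominating function, and the uniform-in-$A$ Cauchy property you assert remains unproven.

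The paper resolves precisely this tension by splitting the approximation into two stages whose errors are of different nature, which your one-stage scheme collapses. Stage one (the paper's third step): truncation in $s$, $g^{(k)}:=f\indikatorzwei{S_k}$, whose differences $g^{(l)}-g^{(k)}=f(\indikatorzwei{S_l}-\indikatorzwei{S_k})$ \emph{are} exactly aligned with $f$ (they are $f$ times indicators); here \cref{24031710}, \eqref{eq:21031701} and the envelopes furnished by $(II)$ apply, and this is the only place where hypothesis $(II)$ is used. Stage two (the paper's second step): inside a fixed $S_k$, where $f$ is bounded and $\lambda_M(S_k)<\infty$, the quantization errors are bounded by constants, and domination comes not from $(II)$ but from the local finiteness of the triplet of $M(\cdot\cap S_k)$, e.g. $\int_{\R^m}\min\{1,\norm{x}^2\}\,\phi_{A\cap S_k}(dx)<\infty$ because $A\cap S_k\in\mathcal{S}$ by \cref{15031702}. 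To glue the two stages into one approximating sequence $f_k:=g^{(k)}_{\zeta_k}$ one needs the stage-two convergence to be uniform in $A\in\sigma(\mathcal{S})$ (so that $\zeta_k$ can be chosen independently of $A$); this is the content of \cref{24031711}, whose proof requires the Hahn--Saks--Vitali theorem and Egorov's theorem, a tool entirely absent from your outline. Without the two-stage splitting and without \cref{24031711}, the concluding three-epsilon diagonal argument is not available, so the heart of $(II)\Rightarrow(I)$ is missing from your proposal.
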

\begin{proof}
In view of what we pointed out before, especially in the proof of \cref{27031702}, it obviously suffices to show that (II) implies (I). Throughout the proof let $(S_n') \subset \mathcal{S}$ be an increasing sequence whose union is $S$ and write $f(s)=(f^{i,j}(s))_{i,j=1,...,m}$ for every $s \in S$. \\
\underline{First step:} We define $S_n:=S_n' \cap \{s:|f^{i,j}(s)|<n \text{ for all $1 \le i,j \le m$ }\} \in \mathcal{S}$ with $S_n \uparrow S$ and thereafter the sequence $(f_n)$ of $\mathcal{S}$-simple functions (see \cref{15031702}) via
\begin{equation*}
f_n^{i,j}(s):= \indikator{S_n}{s} \cdot \begin{cases} \frac{l}{n}, & \text{if $\frac{l}{n} \le f^{i,j}(s) < \frac{l+1}{n}$ for $l=0,...,n^2-1$} \\  -\frac{l}{n}, & \text{if $- \frac{l+1}{n} < f^{i,j}(s) \le - \frac{l}{n}$ for $l=0,...,n^2-1$} \\
0, & \text{if $|f^{i,j}(s)| \ge n$.}\end{cases} 
\end{equation*}
Hence we see that $f_n \rightarrow f$ pointwise with $|f_n^{i,j}(s)| \le |f^{i,j}(s)|$ for every $s \in S$, whereas $|f_n^{i,j}(s)-f^{i,j}(s)| \le 1/n$ merely holds for $s \in S_n$. Moreover, there exist $C_1,C_2>0$ such that $\norm{f_n(s)} \le C_1 \norm{f(s)}$ for all $s \in S$ and $\norm{f_n(s)-f(s)}$ is bounded by $C_2/n$ as long as $s \in S_n$. Particularly we obtain for all $j \ge n$ and $s \in S$: 
\begin{align} 
\norm{f_n(s) -f_{j}(s)} & \le C_1 \, \norm{f(s)} \, \indikator{S_{j} \setminus S_n}{s} + 2C_2  \, \indikator{ S_{n} }{s}  . \label{eq:23031701}
\end{align}
\underline{Second step:} Next we show that $g^{(k)}:= f \indikatorzwei{S_k} \in \mathcal{I}(M)$ for $k \in \N$ arbitrary by means of the $\mathcal{S}$-simple sequence $(g_n^{(k)})_n$ which is defined via $g_n^{(k)}:= f_n \indikatorzwei{S_k}$. Obviously, we have $g_n^{(k)} \rightarrow g^{(k)}$ pointwise and with $C:=2C_1$ one confirms that
\begin{equation} \label{eq:23031702}
\norm{g_n^{(k)}(s)-g_{j}^{(k)}(s)} \le C\, \indikator{ S_{k}}{s}
\end{equation}
is true for all $j \ge n \ge k$ and $s \in S$ due to \eqref{eq:23031701}. In view of \cref{20031706} it suffices to show that $(I_M(g_n^{(k)} \indikatorzwei{A}))_n$ converges in probability. For this purpose we now fix an arbitrary sequence $n_1<j_1<n_2<...$ of increasing natural numbers and prove that the convergences
\allowdisplaybreaks
\begin{align}
\integral{S}{}{U_M \left ((g_{n_l}^{(k)}(s)-g_{j_l}^{(k)}(s)) \indikator{A}{s},s \right)}{\lambda_M(ds)} \rightarrow 0,  \label{eq:24031701} \\
\integral{A}{}{ (g_{n_l}^{(k)}(s)-g_{j_l}^{(k)}(s)) \, \beta_M(s) \, (g_{n_l}^{(k)}(s)-g_{j_l}^{(k)}(s))^{*} }{\lambda_M(ds)} \rightarrow 0,   \label{eq:24031702}  \\
\integral{S}{}{V_M \left ((g_{n_l}^{(k)}(s)-g_{j_l}^{(k)}(s)) \indikator{A}{s},s \right)}{\lambda_M(ds)} \rightarrow 0 \label{eq:24031703} \,
\end{align}
hold for $l \rightarrow \infty$, respectively. By continuity of $U_M(\cdot,s)$ and $V_M(\cdot,s)$ it is first clear that the integrands in \eqref{eq:24031701}-\eqref{eq:24031703} converge to zero for every $s \in S$. Then the assertion follows by dominated convergence in each case: For \eqref{eq:24031702} use \eqref{eq:23031702} and observe that $\norm{\beta_M(s)} \indikator{A \cap S_k}{s}$ is $\lambda_M$-integrable. On the other hand we see that the integrand in \eqref{eq:24031703} is dominated by $V_M(C \indikator{A \cap S_k}{s}I_m,s)$ (here and below at least for $l$ sufficiently large), whereas \eqref{eq:06031702} and \cref{08031703} provide the following steps that have been performed similarly before:
\allowdisplaybreaks
\begin{align*}
 \integral{S}{}{ V_M(C \indikator{A \cap S_k}{s}I_m,s)}{\lambda_M(ds)} & \le   (1+C^2) \integral{S \times \R^m}{}{  \min\{1, \norm{x }^2  \} \indikator{A \cap S_k}{s}}{(\lambda_M \odot \rho_M)(ds,dx)} \\
& =  (1+C^2) \integral{\R^m}{}{ \min\{1, \norm{x }^2  \} }{\phi_{A \cap S_k}(dx)} \\
& <  \infty.
\end{align*}
Using \eqref{eq:21031701} we can finally argue likewise that the integrand in \eqref{eq:24031701} is dominated by 
\begin{equation*}
s \mapsto \left(C \norm{\alpha_m(s)} +C' \integral{\R^m}{}{\min \{1,\norm{x}^2\}}{\rho_M(s,dx)}\right) \indikator{A \cap S_k}{s}
\end{equation*}
with $C':=\max \{2,C+C^3\}$ as well as that the mapping we mentioned recently is $\lambda_M$-integrable. Finally suppose that $(I_M(g_n^{(k)} \indikatorzwei{A}))_n$ would not converge in probability, then it would not be Cauchy either (in view and in the sense of Corollary 6.15 in \cite{Kle08}). Hence we obtain a sequence $n_1<j_1<n_2<...$ as above sucht that $I_M(g_{n_l}^{(k)} \indikatorzwei{A}) - I_M(g_{j_l}^{(k)} \indikatorzwei{A})=I_M((g_{n_l}^{(k)}-g_{n_l}^{(k)} ) \indikatorzwei{A})$ neither converges in probability to zero nor in distribution. By \cref{27031702} and in view of \cref{23111604} together with  \eqref{eq:24031701}-\eqref{eq:24031703} this gives the contradiction.\\
\underline{Third step:} For $A \in \sigma(\mathcal{S})$ arbitrary we further conclude that there is an increasing sequence $(j_l^{A})$ of natural numbers which fullfils the following implication for every $l \in \N$:
\begin{equation} \label{eq:24031715}
 k_1,k_2 \ge j^A_l \quad \Rightarrow  \quad \Pro \left(\norm{I(g^{(k_1)} \indikatorzwei{A} ) - I(g^{(k_2)} \indikatorzwei{A} )} \ge 1/ l  \right) \le 1 / l.
\end{equation}
Similar to the previous step this is again equivalent to the following assertions
\allowdisplaybreaks
\begin{align}
\integral{S}{}{U_M \left ((g^{(l_k)}(s)-g^{(n_k)}(s)) \indikator{A}{s},s \right)}{\lambda_M(ds)} \rightarrow 0,  \label{eq:24031721} \\
\integral{A}{}{ \left (g^{(l_k)}(s)-g^{(n_k)}(s) \right) \, \beta_M(s) \, \left (g^{(l_k)}(s)-g^{(n_k)}(s) \right)^{*} }{\lambda_M(ds)} \rightarrow 0,   \label{eq:24031722}  \\
\integral{S}{}{V_M \left ((g^{(l_k)}(s)-g^{(n_k)}(s)) \indikator{A}{s},s \right)}{\lambda_M(ds)} \rightarrow 0  \label{eq:24031723} 
\end{align}
for $k  \rightarrow \infty$, respectively and with any fixed sequence $n_1<l_1<n_2<...$ as before. In virtue of $(S_{l_k} \setminus S_{n_k}) \subset (S \setminus S_k) \downarrow \emptyset$ we only have to find $\lambda_M$-integrable functions again which dominate the previous integrands. Concerning \eqref{eq:24031722} and \eqref{eq:24031723} this is obvious as we assume the existence of $Q_f$ and the $\lambda_M$-integrability of $V_M(f(\cdot),\cdot)$. For \eqref{eq:24031721} we use \cref{24031710} and then again the assumption on $V_M(f(\cdot),\cdot)$ as well as the one on $U_M(f(\cdot),\cdot)$.\\
\underline{Fourth step:} Inductively \cref{24031711} provides a sequence $(\zeta_k)$ of increasing natural numbers such that
\begin{equation} \label{eq:27031701}
\forall A \in \sigma(\mathcal{S}) \, \, \, \forall k \in \N: \qquad \Pro \left(\norm{I(g^{(k)} \indikatorzwei{A} ) -I(g_{\zeta_k}^{(k)} \indikatorzwei{A} ) } \ge 1/k  \right) \le 1/k.
\end{equation}
Then we replace the sequence $(f_k)$ from the first step by $f_k:=g^{(k)}_{\zeta_k}$ and realize that $f_k \rightarrow f$ pointwise again. Let $A \in \sigma(\mathcal{S})$ as well as $\eps_1,\eps_2>0$ be arbitrary. Then the following calculation yields that $(I_M(f_k \indikatorzwei{A}))$ is a Cauchy sequence w.r.t. convergence in probability. In fact we choose a $K_0 \in \N$ such that $K_0^{-1} \le \min \{\eps_1,\eps_2\} / 3$ and set $K:= \max \{K_0,j_{K_0}^A\}$. Then for any $k_1, k_2 \ge K$ we we get using \eqref{eq:24031715} and \eqref{eq:27031701} that 
\allowdisplaybreaks
\begin{align*}
& \Pro \left( \norm{I(f_{k_1} \indikatorzwei{A}) - I(f_{k_2} \indikatorzwei{A})} \ge \eps_1 \right) \\
& \le \Pro \left( \norm{I( g^{(k_1)}_{\zeta_{k_1}}  \indikatorzwei{A}) - I(g^{(k_1)} \indikatorzwei{A})} \ge K_0^{-1} \right) + \Pro \left( \norm{I( g^{(k_1)} \indikatorzwei{A}) - I(g^{(k_2)} \indikatorzwei{A})} \ge K_0^{-1} \right)   \\
&  \qquad \qquad +\Pro \left( \norm{I( g^{(k_2)} \indikatorzwei{A}) - I(g^{(k_2)}_{\zeta_{k_2}} \indikatorzwei{A})}  \ge K_0^{-1} \right)  \\
& \le \Pro \left( \norm{I( g^{(k_1)}_{\zeta_{k_1}}  \indikatorzwei{A}) - I(g^{(k_1)} \indikatorzwei{A})} \ge k_1^{-1} \right) + \Pro \left( \norm{I( g^{(k_1)} \indikatorzwei{A}) - I(g^{(k_2)} \indikatorzwei{A})} \ge K_0^{-1} \right) \\
& \qquad \qquad  + \Pro \left( \norm{I( g^{(k_2)} \indikatorzwei{A}) - I(g^{(k_2)}_{\zeta_{k_2}} \indikatorzwei{A})}  \ge k_2^{-1} \right) \\
& \le  k_1^{-1} + K_0^{-1}  +k_2^{-1}  \\
& \le \eps_2
\end{align*}
and the proof is complete.
\end{proof}
With $f_j=\indikatorzwei{A_j} I_m$ and the following result, which extends the conclusion in \cite{simulation}, we see that the infinite divisibility of an ISRM implicitly extends to its finite dimensional distributions. 
\begin{cor} \label{20041701}
For $f_1,...,f_n \in \mathcal{I}(M)$ the random vector $(I_M(f_1),...,I_M(f_n))$ has an i.d. distribution.
\end{cor}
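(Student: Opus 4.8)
The plan is to reduce the statement to the case of $\mathcal{S}$-simple integrands and then to pass to the limit. First I would settle the simple case. Given $\mathcal{S}$-simple $f_1,\ldots,f_n$, I would pass to a common refinement, that is, disjoint sets $B_1,\ldots,B_r \in \mathcal{S}$ and matrices $S_{j,l} \in \text{L}(\R^m)$ with $f_j=\summezwei{l=1}{r} S_{j,l}\indikatorzwei{B_l}$ for every $j$ (here I use that $\mathcal{S}$ is a ring, so all relevant intersections again lie in $\mathcal{S}$). Then \eqref{eq:15031701} gives $I_M(f_j)=\summezwei{l=1}{r} S_{j,l}M(B_l)$, so that $(I_M(f_1),\ldots,I_M(f_n))$ is the image of $(M(B_1),\ldots,M(B_r)) \in \R^{rm}$ under a fixed linear map $\R^{rm} \to \R^{nm}$. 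Since the $B_l$ are disjoint, $(RM_1)$ makes $M(B_1),\ldots,M(B_r)$ independent, and each $M(B_l)$ is i.d. because $M$ is an i.d. ISRM; hence their joint law is i.d. on $\R^{rm}$. As the linear image of an i.d. law is again i.d., the vector $(I_M(f_1),\ldots,I_M(f_n))$ is i.d. for simple integrands.

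Next I would treat general $f_1,\ldots,f_n \in \mathcal{I}(M)$ by approximation. For each $j$ I would choose, according to \cref{20031706}, a sequence $(f_{j,k})_k$ of $\mathcal{S}$-simple functions with $I_M(f_{j,k}\indikatorzwei{A}) \to I_M(f_j\indikatorzwei{A})$ in probability for every $A \in \sigma(\mathcal{S})$; taking $A=S$ yields $I_M(f_{j,k}) \to I_M(f_j)$ in probability as $k \to \infty$. Since only finitely many coordinates are involved, the joint vectors converge in probability,
\begin{equation*}
(I_M(f_{1,k}),\ldots,I_M(f_{n,k})) \stokonv{k} (I_M(f_1),\ldots,I_M(f_n)),
\end{equation*}
and therefore in distribution. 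By the first step each approximating vector is i.d., and the class of i.d. laws on $\R^{nm}$ is closed under weak convergence (see \cite{thebook}); hence the limit $(I_M(f_1),\ldots,I_M(f_n))$ is i.d. as well. The corresponding L\'evy--Khintchine exponent can moreover be read off directly from \cref{21081701} (b): the joint characteristic function equals
\begin{equation*}
\bigerwartung{  \text{e}^{\im \summezwei{j=1}{n} \skp{I_M(f_j)}{t_j} }} = \exp \left( \, \integral{S}{}{K_{M} \left( \summezwei{j=1}{n} f_j(s)^{*} t_j ,s \right)}{\lambda_{M}(ds)}   \right),
\end{equation*}
which confirms the infinitely-divisible structure obtained above.

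The only genuinely delicate point is the passage to the limit, namely the fact that a weak limit of infinitely-divisible laws is again infinitely divisible. This is where I would invoke the corresponding closure theorem, and the hypothesis $f_j \in \mathcal{I}(M)$ is exactly what guarantees, through $(I_2)$ in \cref{20031706}, that the approximating integrals converge to honest random vectors, so that the limiting law is a genuine probability measure and no degeneration can occur. With this in hand the argument requires no further estimates; specialising to $f_j=\indikatorzwei{A_j}I_m$ then recovers, as announced, the infinite divisibility of the finite-dimensional distributions of $M$.
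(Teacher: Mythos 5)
Your proof is correct, but it follows a genuinely different route from the paper's. You settle the simple case by writing $(I_M(f_1),\ldots,I_M(f_n))$ as a fixed linear image of the independent i.d. vector $(M(B_1),\ldots,M(B_r))$ over a common refinement, and then pass to general integrands via the approximating sequences from \cref{20031706}, using that coordinatewise convergence in probability gives joint convergence in probability (hence in distribution) and that the class of i.d. laws is closed under weak limits. The paper instead exhibits the $l$-th convolution root directly: if $M(A)\sim[\gamma_A,Q_A,\phi_A]$, then by \cref{09121603} the scaled triplets $[l^{-1}\gamma_A,l^{-1}Q_A,l^{-1}\phi_A]$ define another i.d. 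ISRM $M'$; since the integrability criterion of \cref{22031701} depends only on conditions that are invariant under this scaling, $\mathcal{I}(M)=\mathcal{I}(M')$, and the joint characteristic function formula of \cref{21081701}~(b) identifies the characteristic function of $(I_{M'}(f_1),\ldots,I_{M'}(f_n))$ as $\varphi^{1/l}$, which is infinite divisibility by definition. Your argument is more elementary and self-contained: it needs only \eqref{eq:15031701}, the definition of $\mathcal{I}(M)$, and the classical closure theorem, and in particular it does not invoke the heavy characterization \cref{22031701}. The paper's argument buys a stronger structural insight: the convolution roots of the joint law are themselves realized as stochastic integrals with respect to a scaled ISRM, so the root stays inside the class of objects the paper studies; it also avoids any approximation argument at this stage by leaning on machinery already established. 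One small remark on your write-up: the appeal to \cref{21081701}~(b) at the end is purely decorative, since your limiting argument already yields the conclusion; conversely, had you wanted to avoid the closure-under-weak-limits theorem, that formula would have given you an alternative finish in the spirit of the paper.
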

\begin{proof}
Denote the characteristic function of $(I_M(f_1),...,I_M(f_n))$ by $\varphi$ and fix some arbitrary $l \in \N$. Then it suffices to show that the function $\varphi^{1/l}$, which we should not understand in any logarithmic sense (see \cref{21081701} (b) instead), also describes a characteristic function on $\R^{n \cdot m}$. Thus if $M(A) \sim [\gamma_A,Q_A ,\phi_A]$, we see that $M'$ with $M' (A) \sim [l^{-1} \gamma_A,l^{-1}  Q_A, l^{-1}  \phi_A]$ (for every $A \in \mathcal{S}$) is also a valid ISRM according to \cref{09121603}. Then \cref{22031701} leads to $\mathcal{I}(M)=\mathcal{I}(M')$ such that $(I_{M'}(f_1),...,I_{M'}(f_n))$ has the characteristic function $\varphi^{1/l}$. 
\end{proof} 
For the rest of this paper we briefly want to study the close relation between $\mathbb{K}=\R$ and $\mathbb{K}=\C$ which can be clarified by introducing the \textit{(partially) associated mapping of $f$}, namely $\tilde{f},\tilde{f}_{p}:S \rightarrow$ L$(\R^{2m})$ by
\begin{equation*}
\tilde{f}(s):=  \begin{pmatrix} \text{Re } f(s) & -\text{Im } f(s) \\ \text{Im } f(s) & \text{Re } f(s) \end{pmatrix}  \qquad  \text{and} \qquad \tilde{f}_{p}(s):=  \begin{pmatrix} \text{Re } f(s) & -\text{Im } f(s) \\ 0 & 0 \end{pmatrix},
\end{equation*}
where $f:S \rightarrow$ L$(\C^m)$ is arbitrary. More precisely and with regard to \cref{19041701} we get the following observation in which we assume $M$ to be a $\C^m$valued i.d. ISRM. %
\begin{prop} \label{20031702}
For $f:S \rightarrow L(\mathbb{C}^m)$ we have: $f$ is $M$-integrable if and only if $\tilde{f}$ is $\Xi(M)$-integrable and in this case $\Xi(I_M(f \indikatorzwei{A}))=I_{\Xi(M)}(\tilde{f} \indikatorzwei{A})$ a.s for every $A \in \sigma(\mathcal{S})$. Similarly $f$ is partially $M$-integrable if and only if $\tilde{f}_p$ is $\Xi(M)$-integrable and in this case $\Xi(\text{Re }I_M(f \indikatorzwei{A}))=I_{\Xi(M)}(\tilde{f}_p \indikatorzwei{A})$ a.s. for every $A \in \sigma(\mathcal{S})$.
\end{prop}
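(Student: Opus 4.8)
The plan is to reduce everything to the real-linear isomorphism $\Xi:\C^m\to\R^{2m}$ and the elementary matrix identity it induces. For $R\in\text{L}(\C^m)$ write $R=A+\im B$ with $A=\text{Re } R$, $B=\text{Im } R$; then for $z=x+\im y\in\C^m$ one has $Rz=(Ax-By)+\im(Bx+Ay)$, so that
\[
\Xi(Rz)=\tilde R\,\Xi(z),\qquad \Xi(\text{Re}(Rz))=\tilde R_{p}\,\Xi(z),
\]
where $\tilde R=\begin{pmatrix} A & -B \\ B & A\end{pmatrix}$ and $\tilde R_{p}=\begin{pmatrix} A & -B \\ 0 & 0\end{pmatrix}$. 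First I would record that $R\mapsto\tilde R$ and $R\mapsto\tilde R_{p}$ are real-linear (hence measurable) and send $\indikatorzwei{A_j}$-combinations to the associated mappings of $\mathcal{S}$-simple functions. Applying these identities termwise to $f=\sum_j R_j\indikatorzwei{A_j}$ and using that $\Xi(M)(B)=\Xi(M(B))$ by \cref{19041701}, one obtains for every $A\in\sigma(\mathcal{S})$ the two identities $\Xi(I_M(f\indikatorzwei{A}))=I_{\Xi(M)}(\tilde f\indikatorzwei{A})$ and $\Xi(\text{Re } I_M(f\indikatorzwei{A}))=I_{\Xi(M)}(\tilde f_{p}\indikatorzwei{A})$ for $\mathcal{S}$-simple $f$.

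Next I would pass to the limit. Since $\Xi$ is a homeomorphism of $\C^m$ onto $\R^{2m}$ and, by \cref{19041701}, $\lambda_{\Xi(M)}=\lambda_M$, both defining conditions of \cref{20031706} transform faithfully: a sequence $(f_n)$ of complex $\mathcal{S}$-simple functions satisfies $f_n\to f$ pointwise $\lambda_{\Xi(M)}$-a.e.\ if and only if $\tilde f_n\to\tilde f$ pointwise $\lambda_{\Xi(M)}$-a.e.\ (continuity of $R\mapsto\tilde R$ and of its inverse on the range), and $I_M(f_n\indikatorzwei{A})$ converges in probability if and only if $\Xi(I_M(f_n\indikatorzwei{A}))=I_{\Xi(M)}(\tilde f_n\indikatorzwei{A})$ does. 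Hence, if $f\in\mathcal{I}(M)$ with approximating sequence $(f_n)$, then $(\tilde f_n)$ witnesses $\tilde f\in\mathcal{I}(\Xi(M))$, and letting $n\to\infty$ in the simple-function identity (using continuity of $\Xi$) yields the claimed a.s.\ equality. The partial statement runs identically after replacing $\tilde{(\cdot)}$ by $\tilde{(\cdot)}_{p}$ and $I(\cdot)$ by $\text{Re } I(\cdot)$, in accordance with \cref{20031703} (iii).

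The delicate point is the converse, and this is where I expect the main obstacle. The map $R\mapsto\tilde R$ is \emph{not} onto $\Li{2m}$: its range is the commutant of the complex structure $J=\begin{pmatrix} 0 & -I_m \\ I_m & 0\end{pmatrix}$ (and the range of $R\mapsto\tilde R_{p}$ consists exactly of the matrices whose lower $m$ rows vanish). Consequently an approximating sequence for $\tilde f$ furnished by $\tilde f\in\mathcal{I}(\Xi(M))$ need not have the form $\tilde f_n$ for complex simple $f_n$, so one cannot simply lift it. Instead I would construct a complex approximating sequence for $f$ directly, taking $f_n$ to be the canonical truncation-and-discretization of the entries $\text{Re } f^{i,j}$ and $\text{Im } f^{i,j}$ exactly as in the first step of the proof of \cref{22031701}. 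Then $f_n\to f$ pointwise, while $\tilde f_n\to\tilde f$ pointwise with the entrywise and norm domination $\norm{\tilde f_n(s)}\le C_1\norm{\tilde f(s)}$.

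Finally, since $\tilde f\in\mathcal{I}(\Xi(M))$, \cref{22031701} guarantees that $\gamma_{\tilde f}$ and $Q_{\tilde f}$ exist and that $\phi_{\tilde f}$ is a L\'evy measure; in particular the $\lambda_{\Xi(M)}$-integrable dominating functions assembled from $U_{\Xi(M)}$, $V_{\Xi(M)}$ and $\beta_{\Xi(M)}$ in that proof are available. The dominated-convergence arguments of the second through fourth steps of \cref{22031701} then apply verbatim to the dominated sequence $(\tilde f_n)$ and show that $I_{\Xi(M)}(\tilde f_n\indikatorzwei{A})=\Xi(I_M(f_n\indikatorzwei{A}))$ converges in probability for every $A\in\sigma(\mathcal{S})$. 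Hence $I_M(f_n\indikatorzwei{A})$ converges, so $(f_n)$ witnesses $f\in\mathcal{I}(M)$, and passing to the limit recovers the stated identity. The partial case follows along the same lines with $\tilde f_{p}$ and the matrices of vanishing lower block, which constitute the range of $R\mapsto\tilde R_{p}$.
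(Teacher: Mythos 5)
Your proof is correct, and it is substantially more careful than the paper's own argument, which consists of a single sentence: the claim ``follows by a simple calculation using \eqref{eq:15031701} and passing through the limit.'' For the forward implication and the a.s.\ identities your route is the paper's route made explicit: the termwise identity $\Xi(Rz)=\tilde R\,\Xi(z)$ (resp.\ $\Xi(\operatorname{Re}(Rz))=\tilde R_p\,\Xi(z)$) on $\mathcal{S}$-simple functions, the fact that $\lambda_{\Xi(M)}=\lambda_M$ governs both notions of a.e.\ convergence, and the observation that $\Xi$ is a linear homeomorphism, so convergence in probability transfers both ways; note that this also settles well-definedness of the complex integral, since the real side is unique a.s.\ by \cref{21081701}. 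Where you genuinely add content is the converse implication: you are right that the range of $R\mapsto\tilde R$ is only the commutant of $J=\begin{pmatrix}0&-I_m\\ I_m&0\end{pmatrix}$, a proper subspace of $\Li{2m}$, so a witnessing sequence for $\tilde f\in\mathcal{I}(\Xi(M))$ cannot simply be pulled back through $\Xi$ -- this is precisely the point that the paper's ``passing through the limit'' glosses over. Your repair is sound: by \cref{27031702} membership $\tilde f\in\mathcal{I}(\Xi(M))$ yields condition (II) of \cref{22031701} for $\tilde f$, and the construction in the proof of that theorem, being entrywise and odd-symmetric, stays inside the tilde-range when started from $\tilde f$ (the discretization of $-\operatorname{Im}f^{i,j}$ is minus the discretization of $\operatorname{Im}f^{i,j}$, and truncation by $\indikatorzwei{S_k}$ commutes with the tilde operation). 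One small imprecision: the proof of \cref{22031701} establishes convergence of the integrals not for the step-one sequence $(\tilde f_n)$, as your last paragraph suggests, but for the modified fourth-step sequence $g^{(k)}_{\zeta_k}=f_{\zeta_k}\indikatorzwei{S_k}$; since that sequence is still of the form $\tilde h_k$ for complex simple $h_k\to f$, your conclusion is unaffected, but the sentence should be adjusted. The same bookkeeping with the lower-block-zero matrices, which form the range of $R\mapsto\tilde R_p$, handles the partial case as you indicate. In short: your proof and the paper's agree where the paper actually gives an argument, and your treatment of the converse fills a real gap that the paper leaves to the reader.
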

\begin{proof}
This follows by a simple calculation using \eqref{eq:15031701} and passing through the limit.
\end{proof}
On one hand this immediately allows us to apply \cref{22031701} and \cref{20041701} accordingly. On the other hand it shows that the complex-valued perspective mostly simplifies the description of several problems that actually have a real origin. We derive the following.
\begin{cor} \label{20031701}
Let $M$ be as before, particularly $\C^m$-valued.
\begin{itemize}
\item[(a)] If $f \in \mathcal{I}(M)$, then $I_M(f \indikatorzwei{A})$ is well-defined and i.d. for every $A \in \sigma(\mathcal{S})$, whereas the log-characteristic function of $\Xi(I_M(f \indikatorzwei{A}))$ is given by
\begin{equation} \label{eq:20031710}
\R^{2m} \ni t \mapsto    \integral{A}{}{ K_{\Xi(M)} ( \tilde{f}(s)^{*}t ,s)}{\lambda_{\Xi(M)}(ds)} =   \integral{A}{}{ K_{\Xi(M)} ( \Xi(f(s)^{*} z)  ,s)}{\lambda_{\Xi(M)}(ds)}
\end{equation}
with $z:=\Xi^{-1}(t) \in \C^m$.
\item[(b)] If $f_1,...,f_n \in \mathcal{I}(M)$, then we have for any $t_1,...,t_n \in \R^{2m}$:
\begin{equation*}
\bigerwartung{  \text{e}^{\im \summezwei{j=1}{n} \skp{\Xi(I(f_j))}{t_j}  }} = \exp \left( \, \integral{S}{}{K_{\Xi(M)} \left( \summezwei{j=1}{n} \tilde{f_j}(s)^{*} t_j ,s \right)}{\lambda_{\Xi(M)}(ds)}   \right).
\end{equation*}
\item[(c)] For $f_,f_1,f_2,... \in \mathcal{I}(M)$ we have that $I_M(f_n) \rightarrow I_M(f)$ in probability is equivalent to 
\begin{equation*} 
\integral{\R^m}{}{K_{\Xi(M)}(  (\tilde{f_n}(s)-\tilde{f}(s))^{*} t ,s)}{\lambda_{\Xi(M)}(ds)} \rightarrow 0, \quad t \in \R^{2m}.
\end{equation*}
\item[(d)] Let $f_1,f_2 \in \mathcal{I}(M)$ such that $\norm{\tilde{f_1}(s)} \cdot \norm{\tilde{f_2}(s)}=0$ holds $\lambda_{\Xi(M)}$-a.e. Then $I_M(f_1)$ and $I_M(f_2)$ are independent.
\end{itemize}
\end{cor}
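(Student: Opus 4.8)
The plan is to reduce all four parts to the real-valued \cref{21081701} applied to the real associated ISRM $\Xi(M)$, with \cref{20031702} serving as the bridge. The algebraic backbone is that the real representation $f\mapsto\tilde f$ is a $*$-homomorphism that intertwines complex matrix action with $\Xi$: a direct computation with the block form of $\tilde f(s)$ gives $\tilde f(s)^{*}=\widetilde{f(s)^{*}}$ as well as $\widetilde g\,\Xi(z)=\Xi(gz)$ for every complex $m\times m$ matrix $g$ and $z\in\C^m$, hence
\[
\tilde f(s)^{*}t=\Xi\big(f(s)^{*}z\big),\qquad z=\Xi^{-1}(t).
\]
I would record this identity first, since it is exactly what turns the integrands of the real theorem into the complex-looking ones in \eqref{eq:20031710}; by linearity of the representation one also has $\tilde f_n-\tilde f=\widetilde{f_n-f}$, which is what lets (c) be phrased through $(\tilde f_n-\tilde f)^{*}$.

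Granting this, parts (a), (c) and (d) become bookkeeping. By \cref{20031702} we have $\tilde f\in\mathcal I(\Xi(M))$ with $\Xi(I_M(f\indikatorzwei{A}))=I_{\Xi(M)}(\tilde f\indikatorzwei{A})$ a.s., and $\lambda_{\Xi(M)}=\lambda_M$ by \cref{19041701}. For (a), \cref{21081701}~(a) applied to the $\R^{2m}$-valued ISRM $\Xi(M)$ and the function $\tilde f$ gives that $I_{\Xi(M)}(\tilde f\indikatorzwei{A})$ is well-defined and i.d.\ with log-characteristic function the first integral in \eqref{eq:20031710}; since $\Xi$ is a homeomorphism this transfers to $I_M(f\indikatorzwei{A})$ (i.d.\ in the sense of the convention for $\C^m$), and the identity above rewrites the integrand as the second form. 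For (c) I apply \cref{21081701}~(c) to $\Xi(M)$ and the pair $\tilde f_n,\tilde f$ and transfer the conclusion through the fact that $\Xi$ is a linear homeomorphism, so that $I_{\Xi(M)}(\tilde f_n)\to I_{\Xi(M)}(\tilde f)$ in probability is equivalent to $I_M(f_n)\to I_M(f)$ in probability. For (d), the hypothesis $\norm{\tilde f_1(s)}\cdot\norm{\tilde f_2(s)}=0$ $\lambda_{\Xi(M)}$-a.e.\ is precisely that of \cref{21081701}~(d) for $\Xi(M)$, and independence of $\Xi(I_M(f_1))$ and $\Xi(I_M(f_2))$ is equivalent to that of $I_M(f_1)$ and $I_M(f_2)$ because $\Xi$ is bijective.

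Part (b) is the one genuinely new piece, since the proof of \cref{21081701}~(b) was postponed to here; I must therefore prove the joint formula directly for the real ISRM $N:=\Xi(M)$ and the functions $g_j:=\tilde f_j$, which simultaneously settles \cref{21081701}~(b) as the special case of an $\R^m$-valued integrator. First I would treat $\mathcal S$-simple $g_j$ over a common disjoint partition $B_1,\dots,B_r\in\mathcal S$, say $g_j=\sum_{k=1}^{r} R_k^{(j)}\indikatorzwei{B_k}$. Then $\sum_j\skp{I_N(g_j)}{t_j}=\sum_k\skp{N(B_k)}{\sum_j(R_k^{(j)})^{*}t_j}$, and since the $N(B_k)$ are independent by $(RM_1)$ the joint characteristic function factors into a product whose $k$-th factor is the exponential of the log-characteristic function of $N(B_k)$ at $\sum_j(R_k^{(j)})^{*}t_j$; invoking \eqref{eq:08031702}, i.e.\ that this function equals $\int_{B_k}K_N(\cdot,s)\,\lambda_N(ds)$, and that $g_j\equiv R_k^{(j)}$ on $B_k$, the product collapses to $\exp\big(\int_S K_N(\sum_j g_j(s)^{*}t_j,s)\,\lambda_N(ds)\big)$.

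For general $g_j\in\mathcal I(N)$ I would approximate each by $\mathcal S$-simple functions; the vector $(I_N(g_1),\dots,I_N(g_n))$ is then a limit in probability, hence in distribution, so the left-hand sides converge and it remains to identify the limit of the right-hand integrals with the integral built from the limiting $\sum_j g_j(s)^{*}t_j$. I expect this exchange of limit and integral to be the only delicate point, and I would handle it exactly as in the proof of \cref{21081701}~(a), through the multivariate extension of Proposition 2.6 in \cite{RajRo89} combined with the continuity of the L\'evy--Khintchine correspondence (\cref{23111604}). The complex statement of (b) then follows by taking $g_j=\tilde f_j$ and using $\Xi(I_M(f_j))=I_{\Xi(M)}(\tilde f_j)$.
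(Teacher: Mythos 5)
Your treatment of parts (a), (c) and (d) is correct and is essentially the paper's own argument: \cref{20031702} transports everything to the real associated ISRM $N:=\Xi(M)$, the block-matrix identity $\tilde f(s)^{*}t=\Xi\bigl(f(s)^{*}\Xi^{-1}(t)\bigr)$ converts the integrands, and \cref{21081701} together with \cref{19041701} does the rest. You are also right that \cref{21081701}(b) was deferred to this corollary, so the real version of (b) must be proved here from scratch.

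Part (b) is where your route differs, and it contains a genuine gap. The simple-function computation is fine, but the passage to general $g_j\in\mathcal I(N)$ rests on the convergence
\[
\integral{S}{}{K_N \Bigl( \sum\nolimits_{j} g_{j,k}(s)^{*}t_j,s \Bigr)}{\lambda_N(ds)}\;\longrightarrow\;\integral{S}{}{K_N \Bigl( \sum\nolimits_{j} g_j(s)^{*}t_j,s \Bigr)}{\lambda_N(ds)},
\]
including the existence of the limiting integral, and the tool you invoke does not deliver this. The multivariate extension of Proposition 2.6 in \cite{RajRo89}, as used in the proof of \cref{21081701}(a), concerns integrands of the form $K_N(h(s)^{*}t,s)$ for a \emph{single} integrable function $h$, its own approximating sequence, and a fixed vector $t$. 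Your integrand couples $n$ different approximating sequences through a sum; dominated convergence is unavailable (that failure is the entire reason Proposition 2.6 is nontrivial), and the joint statement you would need is essentially part (b) itself, so the appeal is close to circular.

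The missing idea --- and it is exactly how the paper proves (b) --- is to absorb the vectors $t_j$ into matrices. Writing $t_j=Q_j e$ with $e=(1,\dots,1)$ and suitable matrices $Q_j$, both parts of \cref{04051701} give, almost surely,
\[
\sum_{j=1}^{n}\skp{I_N(g_j)}{t_j}=\skpdrei{I_N\Bigl(\sum\nolimits_j Q_j^{*}g_j\Bigr)}{e},
\]
and $G:=\sum_j Q_j^{*}g_j\in\mathcal I(N)$. Then (b) is nothing but the log-characteristic function of the single integral $I_N(G)$, provided by part (a), evaluated at the point $e$; no new limit argument is needed, and the identity \eqref{eq:11051701} translates the result into the complex formulation (the paper executes this directly in the complex setting with $V_j=R_j-\im Q_j$). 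Note that once you have this device, your entire approximation argument becomes superfluous --- some reduction of this kind (matrix absorption, or alternatively stacking $N$ into an $\R^{2mn}$-valued ISRM and applying (a) there) is indispensable to close the step you left open.
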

\begin{proof}
In view of \cref{20031702} part (a) follows by \cref{21081701} and the claimed equality can be checked immediately. And since, by linearity, $I_M(f_n) \rightarrow I_M(f)$ is equivalent to $\Xi(I_M(f_n-f)) \rightarrow 0$ in probability, this gives (c) again. Moreover, \cref{20031702} says that the assertion in (d) is equivalent to the independence of $I_{\Xi(M)}(\tilde{f_1})$ and $I_{\Xi(M)}(\tilde{f_2})$ such that the proof reduces to the case $\mathbb{K}=\R$. Finally we write $t_j=(t_{j,1},t_{j,2})$ as well as $t_{j,i}= Q_{j,i} e$ with $e=(1,..,1) \in \R^m$ and $Q_{j,i} \in \Li{m}$ suitable. Then for $R_j:= \frac{1}{2} (R_{j,1}+R_{j,2}), Q_j:= \frac{1}{2} (R_{j,1}-R_{j,2}) $ and $V_j:=R_j - \im Q_j \in$ L$(\C^m)$ we observe similar to Proposition 6.2.1 in \cite{SaTaq94} that
\begin{align*}
& \summe{j=1}{n} \skp{\Xi(I_M(f_j))}{t_j} \\
& \summe{j=1}{n} \skp{R_{j,1}^{*} (\text{Re } I_M(f_j)) + R_{j,2}^{*} (\text{Im } I_M(f_j)) }{e}  \\
 &= \summe{j=1}{n} \skp{R_j^{*} (\text{Re } I_M(f_j)) -Q_j^{*} (\text{Im } I_M(f_j)) +Q_j^{*} (\text{Re } I_M(f_j))+ R_j^{*} (\text{Im } I_M(f_j))  }{e} \\
 &= \summe{j=1}{n} \skp{\text{Re } V_j^{*}I_M(f_j) + \text{Im } V_j^{*} I_M(f_j)}{e} \\
 &= \skpzwei{\Xi \left ( I_M  \left (\sum_{j=1}^{n} V_j^{*} \cdot f_j \right) \right )}{\begin{pmatrix} e \\ e \end{pmatrix}}
\end{align*}
by both parts of \cref{04051701}. Verify the identity
\begin{equation}  \label{eq:11051701}
\left(\summe{j=1}{n} V_j^{*} f_j(s) \right)^{*} (e + \im e)=\summe{j=1}{n} f_j(s)^{*} (t_{j,1}+\im t_{j,2})=\summe{j=1}{n} \tilde{f_j}(s)^{*}t_j , \quad s \in S
\end{equation}
to see that (b) follows by (a). 
\end{proof}
\begin{remark} 
We also observe that $\Xi (f(s)^{*}t_1 )$ equals $\tilde{f}_p(s)^{*} t$ for every $t=(t_1,t_2) \in \R^{2m}$. Then the properties for the partial case (see \cref{20031706}) can be formulated and proved similarly which is therefore left to the reader. We merely note that the following key relation holds for any $f_1,...,f_n \in \mathcal{I}_p(M)$ and $t_1,...,t_n \in \R^{m}$.
\begin{equation} \label{eq:21081710}
\bigerwartung{  \text{e}^{\im \summezwei{j=1}{n} \skp{\text{Re }I(f_j)}{t_j}  }} = \exp \left( \, \integral{S}{}{K_{\Xi(M)} \left( \Xi \left( \summezwei{j=1}{n} f_j(s)^{*} t_j \right),s \right)}{\lambda_{\Xi(M)}(ds)}   \right).
\end{equation}
\end{remark}
\section*{Acknowledgement}
The results of this paper are part of the first author's PhD-thesis (see \cite{Diss}), written under the supervision of the second named author.

\end{document}